\documentclass[a4paper,12pt]{article}

\usepackage[left=2cm,right=2cm, top=2cm,bottom=2cm,bindingoffset=0cm]{geometry}

\usepackage{verbatim}
\usepackage{amsmath}
\usepackage{amsthm}
\usepackage{amssymb}
\usepackage{delarray}
\usepackage{cite}
\usepackage{hyperref}
\usepackage{mathrsfs}

\newcommand{\al}{\alpha}

\newcommand{\ga}{\gamma}
\newcommand{\de}{\delta}
\newcommand{\la}{\lambda}
\newcommand{\om}{\omega}
\newcommand{\ee}{\varepsilon}

\newcommand{\vv}{\varphi}
\newcommand{\iy}{\infty}

\theoremstyle{plain}

\newtheorem{thm}{Theorem}
\newtheorem{lem}{Lemma}

\theoremstyle{remark}

\DeclareMathOperator*{\Res}{Res}

\sloppy \allowdisplaybreaks

\begin{document}

\begin{center}
{\large\bf An inverse problem for the non-self-adjoint matrix Sturm-Liouville operator 
}
\\[0.2cm]
{\bf Natalia Bondarenko} \\[0.2cm]
\end{center}

\vspace{0.5cm}

{\bf Abstract.} The inverse problem of spectral analysis for the 
non-self-adjoint matrix Sturm-Liouville operator on a finite interval is investigated.
We study properties of the spectral characteristics for the considered operator,
and provide necessary and sufficient conditions for the solvability of the inverse problem.
Our approach is based on the constructive solution of the inverse problem by the
method of spectral mappings. The characterization of the spectral data in the self-adjoint case
is derived as a corollary of the main result.   
\medskip

{\bf Keywords:} matrix Sturm-Liouville operators, inverse spectral problems, method of
spectral mappings. 

\medskip

{\bf AMS Mathematics Subject Classification (2010):} 34A55 34B24 47E05

\vspace{1cm}

{\large \bf 1. Introduction and main results} \\

Consider the boundary value problem $L = L(Q(x), h, H)$ for the matrix Sturm-Liouville equation
\begin{equation} \label{eqv}
    \ell Y: = -Y''+ Q(x) Y = \la Y, \quad x \in (0, \pi),                                           
\end{equation}
with the boundary conditions
\begin{equation} \label{BC}
    U(Y) := Y'(0) - h Y(0) = 0, \quad V(Y) := Y'(\pi) + H Y(\pi) = 0.                               
\end{equation}

Here $Y(x) = [y_k(x)]_{k = \overline{1, m}}$ is a column vector,
$\la$ is the spectral parameter, and $Q(x) = [Q_{jk}(x)]_{j, k =
\overline{1, m}}$, where $Q_{jk}(x) \in L_2(0, \pi)$ are
complex-valued functions. We will subsequently refer to the matrix
$Q(x)$ as the \textit{potential}. The boundary conditions are
given by the matrices $h = [h_{jk}]_{j, k = \overline{1, m}}$, $H
= [H_{jk}]_{j, k = \overline{1, m}}$, where $h_{jk}$ and $H_{jk}$
are complex numbers. 

In this paper, we study the inverse problem of the spectral theory
for the matrix Sturm-Liouville operator, given by \eqref{eqv}, \eqref{BC}.
Inverse problems consist in recovering differentail operators from their spectral characteristics.
Such problems have many applications in science and engineering.

Inverse problems for the scalar Sturm-Liouville equation ($m = 1$) have been studied fairly completely
(see monographs \cite{Mar77, Lev84, PT87, FY01}). The matrix case is a natural generalization of
the scalar one. A significant contribution in the inverse problem theory for the matrix 
operators was made by Z.S.~Agranovich and V.A.~Marchenko \cite{AM60}, who studied the matrix Sturm-Liouville
operator on the half-line. For the inverse problem on the finite interval, a constructive solution was presented by V.A.~Yurko \cite{Yur06} in the case of the simple spectrum. 
Then D.~Chelkak and E.~Korotayev \cite{CK09}
have given the characterization of the spectral data (necessary and sufficient conditions) for the matrix Sturm-Liouville operator
with asymptotically simple spectrum, which is a strong restriction.
Necessary and sufficient conditions and an algorithm for the solution
in the general case, without any restrictions on the behavior of the spectrum,
provided in \cite{Bond11}. 
Ya.V. Mykytyuk and N.S. Trush \cite{MT10} obtained characterization of the spectral data for the potential 
from the Sobolev class $W_2^{-1}$.

All the previous works on the necessary and sufficient conditions for matrix Sturm-Liouville operators
deal with the self-adjoint case: when the matrices $Q$, $h$ and $H$ are Hermitian. In this paper,
we study the non-self-adjoint case. We develop the approach of \cite{Bond11}, based on the
method of spectral mappings \cite{FY01, Yur02}. This method allows to reduce an inverse problem 
to a so-called main equation, which is a linear equation in a suitable Banach space of infinite sequences. 
The reduction works for non-self-adjoint operators
just as well as for self-adjoint ones. 
Moreover, by necessity one can prove, that the main equation is uniquely solvable.
However, by sufficiency it is necessary to require its solvability even 
in the scalar case (see the example in \cite[Section 1.6.3]{FY01}).
For the non-self-adjoint scalar Sturm-Liouville operator,
a constructive solution of the inverse problem by the method of spectral mappings and
necessary and sufficient conditions 
were obtained by S.A. Buterin, C.-T. Shieh and V.A. Yurko
\cite{But07, BYS13}.
In this paper, we generalize their results, and get necessary and sufficient conditions for the spectral data 
of the matrix Sturm-Liouville operator.

Proceed to the formulation of the main results.
Let $\vv(x, \la)$ and $S(x, \la)$ be matrix-solutions of equation~\eqref{eqv} under the initial conditions
$$
    \vv(0, \la) = I_m, \quad \vv'(0, \la) = h, \quad S(0, \la) = 0_m, \quad S'(0, \la) = I_m.
$$
where $I_m$ is the identity $m \times m$ matrix, $0_m$ is the zero $m \times m$ matrix.
The function $\Delta(\la) := \det V(\vv)$ is called the
\textit{characteristic function} of the boundary value problem
$L$. The zeros of the entire function $\Delta(\la)$ coincide with
the eigenvalues of $L$.

Let $\om$ be some $m \times m$ matrix. We will write
$L(Q(x), h, H) \in A(\om)$, if the problem $L$ has a potential
from $L_2(0, \pi)$ and $h + H + \frac{1}{2} \int_0^{\pi} Q(x) \,
dx = \om$. In the self-adjoint case, the matrix 
$h + H + \frac{1}{2} \int_0^{\pi} Q(x) \,dx$ is diagonalizable 
by the unitary transform. 
In the general case, it is not true,
but we restrict ourselves to the class of diagonalizable matrices.
Then without loss of generality we can assume that 
\begin{equation*} 
    L \in A(\om), \quad \om \in \mathcal{D} = \{\om \colon \om = \mbox{diag}
\{\om_1, \ldots, \om_m\}\}.
\end{equation*}
One can achieve this condition applying the standard unitary transform.

Before we proceed to asymptotics, let us agree to denote by
$\{ \kappa_n \}$ different sequences from~$l_2$.

\begin{lem} \label{lem:asymptrho} 
Let $L \in A(\om)$, $\om \in \mathcal{D}$. The boundary
value problem $L$ has a countable set of eigenvalues $\{\la_{nq}
\}_{n \ge 0, q = \overline{1, m}}$, and
\begin{equation} \label{asymptrho}
     \rho_{nq} := \sqrt{\la_{nq}} = n + \frac{\om_q}{\pi n} + \frac{\kappa_n}{n},
     \quad q = \overline{1, m}.                            
\end{equation}
Here the eigenvalues are counted with their multiplicities, which they have as zeros of the entire characteristic 
function $\Delta(\la)$.
\end{lem}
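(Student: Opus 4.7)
The plan is to derive the asymptotic behavior of the characteristic function $\Delta(\la)$ and then locate its zeros by a Rouché-type counting argument. As a first step, I would apply the method of successive approximations to the Volterra integral equation equivalent to \eqref{eqv} satisfied by $\vv(x,\la)$ with the initial data $\vv(0,\la) = I_m$, $\vv'(0,\la) = h$, to obtain, with $\rho := \sqrt{\la}$ and uniformly in $x\in[0,\pi]$,
\begin{equation*}
\vv(x,\la) = \cos(\rho x)\, I_m + \frac{\sin(\rho x)}{\rho}\Bigl(h + \tfrac{1}{2}\int_0^x Q(t)\,dt\Bigr) + O\bigl(\rho^{-2} e^{|\mathrm{Im}\,\rho|\, x}\bigr),
\end{equation*}
together with the analogous formula for $\vv'(x,\la)$ produced by formal differentiation. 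Substituting into $V(\vv) = \vv'(\pi,\la) + H\vv(\pi,\la)$ and using the hypothesis $h + H + \tfrac{1}{2}\int_0^\pi Q = \om = \mbox{diag}\{\om_q\}$, one obtains
\begin{equation*}
V(\vv) = -\rho\, \mbox{diag}\Bigl\{\sin(\rho\pi) - \frac{\om_q}{\rho}\cos(\rho\pi)\Bigr\}_{q=1}^m + O\bigl(\rho^{-1} e^{|\mathrm{Im}\,\rho|\,\pi}\bigr),
\end{equation*}
which after taking determinants yields
\begin{equation*}
\Delta(\la) = (-\rho)^m\prod_{q=1}^m\Bigl(\sin(\rho\pi) - \frac{\om_q}{\rho}\cos(\rho\pi)\Bigr) + O\bigl(\rho^{m-2} e^{m|\mathrm{Im}\,\rho|\,\pi}\bigr).
\end{equation*}

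The zeros of the leading product are the $\rho$ solving $\tan(\rho\pi) = \om_q/\rho$ for some $q$, which to leading order lie at $\rho = n + \om_q/(\pi n) + O(n^{-3})$. I would then apply Rouché's theorem on the large circles $|\rho| = n+\tfrac12$ together with small circles around each candidate. On contours staying at distance at least $\de$ from the integers one has $|\sin(\rho\pi)| \ge c_\de\, e^{|\mathrm{Im}\,\rho|\,\pi}$, so the leading term dominates the remainder for sufficiently large $n$; this both forces the zeros of $\Delta$ into small neighborhoods of the model zeros and produces the correct count with multiplicity. This establishes countability and the leading-order asymptotic $\rho_{nq} = n + \om_q/(\pi n) + o(1/n)$.

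To sharpen the $o(1/n)$ correction into $\kappa_n/n$ with $\{\kappa_n\}\in l_2$, I would substitute the ansatz $\rho_{nq} = n + \om_q/(\pi n) + \de_{nq}$ into $\Delta(\la_{nq}) = 0$, expand, and solve for $\de_{nq}$; the key fact is that the residual terms in the asymptotics of $\vv$ and $\vv'$ can be written as $\rho^{-1}\int_0^\pi f(t)\cos(2\rho t)\,dt$-type expressions with $f \in L_2(0,\pi)$, whose values at $\rho = n + O(1/n)$ form $l_2$-sequences in $n$ by a standard Bessel inequality. The principal obstacle is the non-self-adjoint setting with possibly coincident $\om_q$: eigenvalues may merge or carry nontrivial Jordan structure, so Rouché must be applied groupwise to clusters, and the labelling $(n,q)\mapsto \la_{nq}$ must be made compatible with multiplicities so that the remainder genuinely lies in $l_2$ rather than being merely $o(1)$. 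Managing this bookkeeping of near-coincident zeros is the delicate step of the argument.
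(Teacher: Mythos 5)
The paper does not prove this lemma in-text: it defers to \cite[Lemma~1]{Bond11}, whose argument is exactly the route you outline (the representation \eqref{asymptphi}--\eqref{asymptdphi} of $\vv$, $\vv'$ via successive approximations, the resulting asymptotics \eqref{asymptVphi} for $V(\vv)$, and a Rouch\'e count on $|\rho|=N+\tfrac12$ plus small contours around the candidate zeros). So your strategy is the intended one. A minor imprecision first: for $Q\in L_2$ the remainder in your formula for $V(\vv)$ is not $O(\rho^{-1}e^{|\tau|\pi})$ --- it contains the term $\tfrac12\int_0^\pi Q(t)\cos\rho(\pi-2t)\,dt$, which is only $o(e^{|\tau|\pi})$ pointwise (an $l_2$-sequence along $\rho=n+O(1/n)$), so your determinant expansion with remainder $O(\rho^{m-2}e^{m|\tau|\pi})$ is not justified as written. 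This does not affect the Rouch\'e step, since on $G_\de$ the leading term has size $|\rho|e^{|\tau|\pi}$, and you do use the $l_2$ nature of these oscillatory integrals correctly at the end.

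The genuine gap is the step you dismiss as ``bookkeeping of near-coincident zeros'': it is the analytic heart of the lemma and your sketch omits the one fact that makes it work. Writing $\rho=n+z/(\pi n)$, the equation $\Delta(\la)=0$ reduces for large $n$ to $\det\bigl(zI_m-\om-E_n(z)\bigr)=0$, where $E_n$ is a \emph{full} (non-diagonal) matrix with $\|E_n\|\le\kappa_n$, $\{\kappa_n\}\in l_2$; your factorization into the product $\prod_q(\sin\rho\pi-\om_q\rho^{-1}\cos\rho\pi)$ ignores the off-diagonal part of $E_n$. If $\om_{m_s}$ occurs with multiplicity $k=|J_s|\ge 2$, a generic perturbation of size $\eps$ moves a $k$-fold eigenvalue by $\eps^{1/k}$, and $\kappa_n^{1/k}\notin l_2$; no relabelling can repair that, so the cluster analysis as sketched only yields $o(1/n)$, not $\kappa_n/n$. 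What saves the estimate is that $\om\in\mathcal D$ is diagonal, so the block of $\om$ attached to the cluster $J_s$ is the \emph{scalar} matrix $\om_{m_s}I_k$: reducing by a Schur complement against the spectrum bounded away from $\om_{m_s}$, the roots near $\om_{m_s}$ satisfy $z-\om_{m_s}=\mu$ with $\mu$ an eigenvalue of a $k\times k$ matrix of norm $O(\kappa_n)$, whence $|z-\om_{m_s}|=O(\kappa_n)$. This is precisely why the hypothesis $\om\in\mathcal D$ appears in the statement and why the paper says the proof ``repeats'' that of \cite{Bond11}; your proposal needs this observation to close.
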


Since the matrix $\om$ is diagonal, the proof of Lemma~\ref{lem:asymptrho} repeats the proof
of \cite[Lemma~1]{Bond11}.

Let $\Phi(x, \la) = [\Phi_{jk}(x, \la)]_{j, k = \overline{1, m}}$
be a matrix-solution of equation~\eqref{eqv} under the boundary conditions
$U(\Phi) = I_m$, $V(\Phi) = 0_m$. We call $\Phi(x, \la)$ the
\textit{Weyl solution} for $L$. Put $M(\la) := \Phi(0, \la)$. The
matrix $M(\la) = [M_{jk}(\la)]_{j,k = \overline{1, m}}$ is called
the \textit{Weyl matrix} for $L$. The notion of the Weyl matrix
is a generalization of the notion of the Weyl function ($m$-function)
for the scalar case (see \cite{Mar77}, \cite{FY01}).
The Weyl functions and their generalizations often appear in applications and in pure
mathematical problems, and they are natural spectral characteristics in the inverse problem
theory for various classes of differential operators.

Using the definition for $\Phi(x, \la)$ and $M(\la)$, one can easily check that
\begin{equation} \label{formM}
    M(\la) = -(V(\vv))^{-1} V(S).                                                          
\end{equation}
The matrix-function $M(\la)$ is meromorphic in $\la$ with poles at the eigenvalues
$\{\la_{nq}\}$ of $L$. In general, the poles can be multiple, but we put the following restriction. 

\medskip

{\bf Assumption 1.} All the poles of the matrix-function $M(\la)$ are simple.

\medskip

Note that Assumption~1 corresponds to the case, when the operator does not have associated functions
(see \cite{Naimark}). If there is a finite number of multiple poles, one
can use the approach of \cite{But07, BYS13}.

Define the {\it weight matrices}:
$$
    \alpha_{nq} := \mathop{\mathrm{Res}}_{\la = \la_{nq}}M(\la).
$$

\medskip

{\bf Assumption 2.} The sequence of the matrices $\{ \al_{nq} \}$
is bounded in a matrix norm: $\| \al_{nq}\| \le C$, for all $n \ge 0$,  $q = \overline{1, m}$.

\medskip

For definiteness, here and below we consider the following matrix norm
\begin{equation} \label{matrixnorm}
\| A \| = \max_{1 \le j \le m} \sum_{k = 1}^m |a_{jk} |, \quad A = [a_{jk}]_{j, k = \overline{1, m}}.
\end{equation}

We say that the boundary value problem $L$ belongs to the class $A_{1,2}(\om)$, if
$L \in A(\om)$ and $L$ satisfies Assumptions~1 and 2.

Let $\{ \la_{n_k q_k} \}_{k \geq 0}$ be all the distinct
eigenvalues from the collection $\{ \la_{n q}\}_{n \geq 0, q =
\overline{1, m}}$. Put
$$
     \alpha'_{n_k q_k} := \alpha_{n_k q_k}, \, k \geq 0, \quad \alpha'_{nq} = 0_m,
     \, (n, q) \notin \{ (n_k, q_k )\}_{k \geq 0}.
$$

Fix the numbers $1 = m_1 < m_2 < \dots < m_p$ so that
$\{ \omega_{m_s} \}_{s = 1}^p$ are all the distinct values 
in the collection $\{ \omega_q \}_{q = 1}^m $.
Let $J_s = \{ q \colon \om_q = \om_{m_s} \}$, and
$\alpha_n^{(s)} = \sum\limits_{q \in J_s} \alpha'_{nq}$, $s = \overline{1, p}$.
\footnote{In the case of multiple eigenvalues, the same weight matrices $\al_{nq}$ occur in $\Lambda$ multiple times. 
To count each residue in the sum only once, we use the notation $\al'_{nq}$.
}
Analogously to $\kappa_n$, denote by $\{ K_n \}$ different
matrix sequences, such that norms of these matrices form sequences from $l_2$.

\begin{lem} \label{lem:asymptalpha1}
Let $L \in A_{1,2}(\om)$, $\om \in \mathcal{D}$. Then the
following relations hold
\begin{equation} \label{asymptalpha1}
    \alpha_n^{(s)} = \frac{2}{\pi} I^{(s)} + K_n,
    \quad s = \overline{1, p}, \quad n \ge 0,                     
\end{equation}
\begin{equation} \label{asymptalpha2}
(I_m - I^{(s)}) \al_{nq} = K_n, \quad n \ge 0, \: s = \overline{1, p}, \: q \in J_s,
\end{equation}
where
$$
    I^{(s)} = [I^{(s)}_{jk}]_{j, k = \overline{1, m}}, \quad I^{(s)}_{jk} = \left\{
                                                            \begin{array}{cc}
                                                               1, & j = k \in J_s, \\
                                                               0, & otherwise.
                                                            \end{array} \right.
$$
\end{lem}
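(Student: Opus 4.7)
The starting point is the representation $M(\la) = -(V(\vv))^{-1}V(S)$ together with the standard large-$|\rho|$ asymptotics obtained by iterating the integral equations for $\vv$ and $S$. These give, uniformly on horizontal strips,
\begin{equation*}
V(\vv(\la)) = -\rho\sin(\rho\pi)\,I_m + \cos(\rho\pi)\,\om + \rho^{-1}R_1(\rho), \qquad V(S(\la)) = \cos(\rho\pi)\,I_m + \rho^{-1}R_2(\rho),
\end{equation*}
where the matrix remainders $R_1, R_2$ are uniformly bounded, and their integer-lattice values $R_j(n)$ form matrix sequences with $l_2$ norms (this $l_2$-structure comes from Parseval applied to Fourier-type integrals of $Q$, $h$, $H$ against trigonometric kernels). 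Since $\om \in \mathcal{D}$ is diagonal, the leading part of $V(\vv)$ splits along the projections $I^{(s)}$: writing $d_s(\rho) := -\rho\sin(\rho\pi) + \om_{m_s}\cos(\rho\pi)$, we have $V(\vv) = \sum_{s=1}^p d_s(\rho)\,I^{(s)} + \rho^{-1}R_1(\rho)$.

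\textbf{Reduction to a contour integral.} In the $\la$-plane the cluster centers $n^2 + 2\om_{m_s}/\pi$ are separated by a constant independent of $n$; fix $\Delta \in \bigl(0,\,\min_{s \ne s'}|\om_{m_s}-\om_{m_{s'}}|/\pi\bigr)$ and let $\Gamma_n^{(s)} := \{\la : |\la - n^2 - 2\om_{m_s}/\pi| = \Delta\}$. By \eqref{asymptrho}, for $n$ large the contour $\Gamma_n^{(s)}$ encloses precisely $\{\la_{nq} : q \in J_s\}$ and no other eigenvalue. Combining Assumption~1 with the definition of $\al'_{nq}$ gives
\begin{equation*}
\al_n^{(s)} = \frac{1}{2\pi i}\oint_{\Gamma_n^{(s)}} M(\la)\,d\la.
\end{equation*}
On $\Gamma_n^{(s)}$ a direct computation yields $|d_{s'}(\rho)| \ge c > 0$ uniformly in $n$ and $s'$, so the Neumann series converges and produces $(V(\vv))^{-1} = \sum_{s'=1}^p d_{s'}(\rho)^{-1}\,I^{(s')} + \rho^{-1}\widetilde R_1(\rho)$; multiplying by $V(S)$,
\begin{equation*}
M(\la) = -\cos(\rho\pi)\sum_{s'=1}^p d_{s'}(\rho)^{-1}\,I^{(s')} + \rho^{-1}T_n(\rho),
\end{equation*}
where $T_n$ is bounded on $\Gamma_n^{(s)}$ and depends linearly on $R_1, R_2$ modulo bounded factors.

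\textbf{Residues and $l_2$ control.} Among the scalars $-\cos(\rho\pi)d_{s'}(\rho)^{-1}$, only the one with $s'=s$ has a pole inside $\Gamma_n^{(s)}$: a simple zero of $d_s$ at $\rho_n^{(s)}$, at which the scalar residue computation gives $2/\pi + O(n^{-2})$. Hence the principal part of the contour integral equals $(2/\pi)\,I^{(s)} + O(n^{-2})$, which is the leading term of \eqref{asymptalpha1}. The remaining contribution $\frac{1}{2\pi i}\oint_{\Gamma_n^{(s)}}\rho^{-1}T_n(\rho)\,d\la$ is of size $n^{-1}\sup_{\Gamma_n^{(s)}}\|T_n\|$; since $T_n(\rho)$ on $\Gamma_n^{(s)}$ differs from $T_n(n)$ by an $O(n^{-1})$ term, and $\|T_n(n)\|$ is $l_2$ in $n$ (inherited from $R_1(n), R_2(n)$), this correction lies in the class $K_n$. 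For \eqref{asymptalpha2}, I localize around a single $\rho_{nq}$ with $q \in J_s$ via a tiny contour and use the block decomposition of $V(\vv)$: the leading-order residue matrix is proportional to $I^{(s)}$ (hence supported on rows indexed by $J_s$), so left-multiplication by $I_m - I^{(s)}$ annihilates the principal part and leaves only $K_n$-class contributions from the off-block entries of the Neumann expansion.

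\textbf{Main obstacle.} The delicate step is propagating the integer-lattice $l_2$-behavior of $R_1, R_2$ through the matrix Neumann inversion and the cluster-wise contour integration; and, for \eqref{asymptalpha2}, handling the fact that within a cluster ($|J_s|>1$) the individual residues $\al_{nq}$ need not be small in absolute size---only their off-block components are. This forces a block-Schur analysis of $(V(\vv))^{-1}$ near the cluster rather than a naive pointwise residue argument, and is the matrix-cluster extension of the scalar non-self-adjoint analysis. The overall structure mirrors \cite{Bond11}, with Hermiticity arguments replaced by direct analyticity and growth estimates valid in the present complex setting.
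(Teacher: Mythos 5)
For \eqref{asymptalpha1} your argument is essentially the paper's: the same cluster contour around $n^2+2\om_{m_s}/\pi$, the asymptotics of $V(\vv)$ and $V(S)$, inversion of the leading diagonal part, and a residue computation giving $\tfrac{2}{\pi}I^{(s)}$. One misstatement: you write $V(\vv)=-\rho\sin\rho\pi\, I_m+\cos\rho\pi\,\om+\rho^{-1}R_1(\rho)$ with $R_1$ uniformly bounded, i.e.\ an $O(\rho^{-1})$ remainder. For $Q\in L_2$ this is false: the remainder contains $\tfrac12\int_0^\pi Q(t)\cos\rho(\pi-2t)\,dt$, which is only $o(1)$, and on the contour it equals its value at $\rho=n$ (an $l_2$ sequence by Parseval) up to $O(n^{-1})$. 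This does not break your conclusion --- the correctly weakened remainder still produces a $K_n$-class error in $M(\la)$ and hence in the contour integral, which is all \eqref{asymptalpha1} asserts --- but the chain of estimates as written is not valid, and your subsequent claim that the correction is of size $n^{-1}\sup\|T_n\|$ inherits the same overstatement.

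The genuine gap is in \eqref{asymptalpha2}. You propose to localize ``around a single $\rho_{nq}$ with $q\in J_s$ via a tiny contour'' and then argue that $I_m-I^{(s)}$ annihilates the principal part. Within a cluster the eigenvalues $\la_{nq}$, $q\in J_s$, may coalesce or approach each other at an uncontrolled rate, so no contour separating one of them from the rest admits uniform estimates; you acknowledge this in your ``main obstacle'' paragraph but the proposed remedy (``block-Schur analysis of $(V(\vv))^{-1}$ near the cluster'') is only named, not carried out, and even if executed it still requires an a priori bound on $\|\al_{nq}\|$: the off-block rows of $\Res(V(\vv))^{-1}$ are of the form $K_n$ times the residue of the Schur complement's inverse, and without a bound on that residue the product is not $l_2$. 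Your proposal never invokes Assumption~2, which is exactly this bound and is essential. The paper's route avoids the local contour entirely: since $V(\vv)M(\la)$ is entire, taking residues gives the algebraic identity $V(\vv(x,\la_{nq}))\al_{nq}=0_m$; combining it with $V(\vv(x,\la_{nq}))=(-1)^n(\om-\om_q I_m+K_n)$ and $\|\al_{nq}\|\le C$ from Assumption~2 yields $(\om-\om_q I_m)\al_{nq}=K_n$, and inverting the diagonal factor $\om-\om_q I_m$ on the complement of $J_s$ gives \eqref{asymptalpha2} regardless of how the eigenvalues inside the cluster are distributed. You should replace your localization argument by this identity (or supply the missing Schur-complement estimates together with an explicit appeal to Assumption~2).
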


Put $\al_n := \sum\limits_{s = 1}^p \al_n^{(s)} = \sum\limits_{q = 1}^m \al'_{nq}$.

\begin{lem} \label{lem:asymptalpha2}
Let $L \in A_{1,2}(\om)$, $\om \in \mathcal{D}$. Then the following relation holds
\begin{equation} \label{asymptalpha3}
    \al_n = \frac{2}{\pi} I_m + \frac{K_n}{n}, \quad n \ge 0.
\end{equation}
\end{lem}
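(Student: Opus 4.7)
\textbf{Plan for the proof of Lemma \ref{lem:asymptalpha2}.}

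The obvious first move is to simply sum \eqref{asymptalpha1} over $s$. Since the sets $J_s$ partition $\{1,\dots,m\}$, we have $\sum_{s=1}^p I^{(s)} = I_m$, so the sum immediately yields $\al_n = \frac{2}{\pi}I_m + K_n$. However, this only produces an $l_2$-summable error, whereas we need the stronger statement $\al_n = \frac{2}{\pi}I_m + K_n/n$. Thus the heart of the proof must be extracting an additional factor of $1/n$ from cancellations between the error terms of the individual $\al_n^{(s)}$, which is invisible after a naive summation.

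To capture those cancellations, I would abandon the decomposition into blocks $\al_n^{(s)}$ and instead represent the total weight matrix as a single contour integral. Choose, for $n$ large enough, a small contour $\Gamma_n$ in the $\la$-plane (say a circle of fixed radius around $\la = n^2$, or a contour in the $\rho$-plane around $\rho = n$) that encloses exactly the cluster $\{\la_{nq}\}_{q=1}^m$ and no other eigenvalue. By Assumption~1 and the definition of $\al_n$, Cauchy's residue theorem gives
\begin{equation*}
    \al_n \;=\; \frac{1}{2\pi i}\oint_{\Gamma_n} M(\la)\,d\la.
\end{equation*}
Using formula \eqref{formM}, together with the standard asymptotic expansions of $\vv(x,\la)$ and $S(x,\la)$ (of the form $\cos(\rho x)I_m + O(1/\rho)$ and $\sin(\rho x)/\rho\cdot I_m + O(1/\rho^2)$, with coefficients of the subleading terms expressible through $h$, $H$ and $\int_0^x Q$), I would derive a two-term expansion
\begin{equation*}
    M(\la) \;=\; \frac{\cos(\rho\pi)}{\rho\sin(\rho\pi)}\,I_m \;+\; \frac{1}{\rho^2\sin^2(\rho\pi)}\,T(\rho) \;+\; R(\rho),
\end{equation*}
where $T(\rho)$ is a matrix-valued function bounded on $\Gamma_n$ and depends on $\om$ and the integrated data, and $R(\rho)$ decays fast enough that its $\Gamma_n$-integral satisfies an $l_2$-in-$n$ estimate with an extra $1/n$.

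The calculation then splits into pieces. The contour integral of the leading term $\frac{\cos(\rho\pi)}{\rho\sin(\rho\pi)} I_m$ over $\Gamma_n$ equals exactly $\frac{2}{\pi}I_m$ (this is the scalar free-case residue computed $m$ times along the diagonal). The integral of the $T$-term produces a contribution of order $1/n$ whose matrix coefficient can be controlled using \eqref{asymptrho} together with a standard telescoping/Cauchy-Schwarz argument to show it belongs to the class $K_n/n$. Finally, the $R$-term is estimated uniformly on $\Gamma_n$ by $O(1/\rho^3)$, and upon integration over a contour of length $O(n)$ (in $\la$) this yields $O(1/n^2)$, which is absorbed into $K_n/n$.

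The main obstacle is obtaining the expansion of $M(\la)$ to the required precision with error terms that are \emph{uniformly} $l_2$-summable in $n$ after integration, and in particular identifying $T(\rho)$ sharply enough that its residue at $\rho=n$ admits a $K_n/n$-type estimate rather than merely $K_n$. This will rely on the integral representations of the fundamental solutions (transformation operators, or successive approximations), and on the Riemann--Lebesgue-type $l_2$-asymptotics of Fourier coefficients of $Q$, which provide the $l_2$-summability of the residual sequences. Once these asymptotics of $M(\la)$ are in hand, the rest is a direct computation.
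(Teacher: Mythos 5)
Your overall strategy is the paper's: note that summing \eqref{asymptalpha1} only gives a $K_n$ remainder, represent the whole cluster sum $\al_n$ as a single contour integral $\frac{1}{2\pi i}\oint_{\ga_n} M(\la)\,d\la$, and feed in refined asymptotics of $V(\vv)$ and $V(S)$ via \eqref{formM}. The divergence — and the gap — is in the one step that actually carries the content of the lemma: upgrading the error from $O(1/n)$ to $K_n/n$, i.e.\ to $\frac{1}{n}\times(\text{an } l_2 \text{ sequence})$. Your crude estimate of the $T$-term ($O(1/\rho^2\sin^2\rho\pi)$ integrand times a contour of length $O(n)$) gives only $O(1/n)$ with a bounded, not $l_2$, coefficient; that is strictly weaker than \eqref{asymptalpha3} and, in particular, too weak for the convergence condition \eqref{defOmega} later in the paper, which needs $\{n\|\al_n-\tilde\al_n\|\}\in l_2$. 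You acknowledge this and defer it to ``a standard telescoping/Cauchy--Schwarz argument using \eqref{asymptrho}'', but those are not the right tools: the eigenvalue asymptotics \eqref{asymptrho} do not control the residues of $M$, and no amount of telescoping turns a bounded coefficient into an $l_2$ one. The $l_2$ gain has a specific source that your plan does not identify.

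Concretely, the dangerous term in $V(\vv)$ is the Fourier coefficient $\frac12\int_0^\pi Q(t)\cos\rho(\pi-2t)\,dt$, which on the contour equals $(-1)^n L_n + K_n(\mu)/n$ with $L_n$ the $n$-th cosine Fourier coefficient of $Q$ — a sequence that is only $l_2$, not $O(1/n)$. The paper's resolution (after pushing the expansion of $\vv'$ one order further, see \eqref{reprkappa}, to make sure everything else is genuinely $K_n(\mu)/n$) is that $L_n$ is \emph{independent of the contour variable} $\mu$, so by Lemma~\ref{lem:int} the integral $\frac{1}{2\pi i}\oint(\mu I_m-\om+L_n)^{-1}d\mu$ equals $I_m$ \emph{exactly}: the $l_2$ perturbation contributes precisely zero, and only the $K_n(\mu)/n$ remainders survive the integration. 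In your perturbative version the same cancellation appears as the fact that at the \emph{double} pole of $T(\rho)/(\rho\sin\rho\pi)^2$ at $\la=n^2$ the constant part of $T$ has zero residue; what survives is $\frac{1}{2n}\,T'(\rho)\big|_{\rho=n}$ plus $O(1/n^2)$, and one must then check that $T'(n)\in l_2$ — which holds because $\frac{d}{d\rho}\cos^2\rho\pi$ vanishes at integers and the $\rho$-derivative of the Fourier integral is again a Fourier coefficient of an $L_2$ function ($(\pi-2t)Q(t)$), hence $l_2$ by Riemann--Lebesgue. Without this observation the $\om$- and $Q$-dependent parts of $T$ would appear to contribute a non-$l_2$ multiple of $1/n$. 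A secondary point to settle is the contour: on a fixed-radius circle in the $\la$-plane your Neumann expansion of $(V(\vv))^{-1}$ has ratio $\approx\|\om\|/R$, so every term is $O(1)$ and you must sum residues at poles of all orders; on a fixed-radius circle in the $\rho$-plane the expansion is genuinely asymptotic but all its terms are singular at $\la=n^2$ rather than at the actual eigenvalues. Either can be made to work, but the paper's non-perturbative evaluation via Lemma~\ref{lem:int} avoids this bookkeeping entirely.
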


The data $\Lambda := \{\la_{nq}, \alpha_{nq} \}_{n \geq 0, \,q =
\overline{1, m}}$ are called the \textit{spectral data} of the
problem $L$.
Consider the following inverse problem.

\medskip

{\bf Inverse Problem 1.} Given the spectral data $\Lambda$, construct $Q$, $h$ and $H$.

\medskip

Describe the general strategy of our method.
Suppose we know the spectral data $\Lambda$ of some unknown boundary value problem 
$L \in A_{1,2}(\om)$, $\om \in \mathcal{D}$.
Choose an arbitrary model boundary value problem $\tilde L = L(\tilde Q(x), \tilde h, \tilde H) \in A_{1,2}(\om)$ 
(for example, one can take $\tilde Q(x) = \frac{2}{\pi} \om$, $\tilde h = 0_m$,
$\tilde H = 0_m$). We agree that if a certain symbol $\gamma$
denotes an object related to $L$, then the corresponding symbol
$\tilde \gamma$ with tilde denotes the analogous object related to
$\tilde L$. 

Denote $\la_{nq0} = \la_{nq}, \quad \la_{nq1} = \tilde \la_{nq}$, $n \ge 0$, $q = \overline{1, m}$. 
Let $\psi(x) = [\vv(x, \la_{nqi})]_{n \ge 0, q = \overline{1, m}, i = 0, 1}$,
$\tilde \psi(x) = [\tilde \vv(x, \la_{nqi})]_{n \ge 0, q = \overline{1, m}, i = 0, 1}$.
It is shown in Section~4, that for each fixed $x \in [0, \pi]$, $\psi(x)$ satisfies {\it the main equation}
\begin{equation} \label{main0}
\tilde \psi(x) = \psi(x) (I + \tilde R(x))    
\end{equation}
in a suitable Banach space $B$ of infinite bounded matrix sequences.
Here $I$ is the identity operator in $B$, and the operator $\tilde R(x)$ is constructed
by the model problem $\tilde L$ and two sets of spectral data $\Lambda$, $\tilde \Lambda$.
Solving the main equation, one can recover the potential $Q$ and the coefficients of
the boundary conditions $h$ and $H$ by Algorithm~1, provided in Section~4.
Using the main equation, we obtain necessary and sufficient conditions for
spectral data of the problem $L$ from $A_{1,2}(\om)$.

We will write $\{\la_{nq}, \alpha_{nq} \}_{n \geq 0, q =
\overline{1, m}} \in \mbox{Sp}$, if $\la_{nq}$ are complex numbers,
$\al_{nq}$ are $m \times m$ matrices, and for $\la_{nq} = \la_{kl}$ we
always have $\alpha_{nq} = \alpha_{kl}$.

\begin{thm} \label{thm:1}
Let $\om \in \mathcal{D}$. For data $\{\la_{nq}, \alpha_{nq}\}_{n \geq 0, q = \overline{1, m}} \in \mbox{Sp}$ to be
the spectral data for a certain problem $L \in A_{1,2}(\om)$ it is
necessary and sufficient to satisfy the following conditions.

(A) The asymptotics \eqref{asymptrho}, \eqref{asymptalpha1}, 
\eqref{asymptalpha2} \eqref{asymptalpha3} are valid, and Assumption~2 holds for $\{ \al_{nq} \}$.

(R) The ranks of the
matrices $\al_{nq}$ coincide with the multiplicities of the corresponding
values $\la_{nq}$.

(M) The main equation \eqref{main0} is uniquely solvable.

Condition (M) holds for any choice of a model problem $\tilde L \in A_{1,2}(\om)$ by necessity and 
for at least one problem $\tilde L$ by sufficiency.

\end{thm}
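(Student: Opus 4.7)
The plan is to split the argument into necessity and sufficiency, mirroring the standard strategy of the method of spectral mappings. For the necessity direction, given $L \in A_{1,2}(\om)$ with spectral data $\Lambda$, condition (A) is already essentially established: the eigenvalue asymptotics \eqref{asymptrho} come from Lemma~\ref{lem:asymptrho}, and the weight-matrix asymptotics \eqref{asymptalpha1}--\eqref{asymptalpha3} come from Lemmas~\ref{lem:asymptalpha1} and~\ref{lem:asymptalpha2}; Assumption~2 is built into membership in $A_{1,2}(\om)$. Condition (R) follows from formula \eqref{formM} together with Assumption~1: at a simple pole $\la_{nq}$ of $M(\la)$, the residue $\al_{nq}$ admits a representation through eigenvectors of $V(\vv)(\la_{nq})$, and a direct rank count shows that $\operatorname{rank}\al_{nq}$ equals the geometric multiplicity of $\la_{nq}$, which under Assumption~1 equals the algebraic multiplicity (the order of the zero of $\Delta(\la)$).

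Condition (M) by necessity requires showing $I + \tilde R(x)$ is invertible on $B$ for every model $\tilde L \in A_{1,2}(\om)$. The plan here is to follow the scalar template of \cite{But07, BYS13}: derive from the definition of $\tilde R(x)$ in Section~4 that its action is essentially a rearrangement of Cauchy-type series on the contours around the eigenvalues; then, by symmetry of the construction in $\Lambda$ and $\tilde\Lambda$, show that if $f(I + \tilde R(x)) = 0$ for some $f \in B$, then the corresponding formal series transform produces an entire matrix-function that vanishes identically, which forces $f = 0$. The asymptotic estimates in (A) are used to guarantee uniform convergence of the involved series.

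For sufficiency, fix a model problem $\tilde L \in A_{1,2}(\om)$ for which (M) holds. For each $x \in [0,\pi]$ solve the main equation \eqref{main0} to obtain a sequence $\psi(x)$, and read off $\vv(x,\la_{nqi})$ as its entries. Feed these into Algorithm~1 (from Section~4) to define candidate functions $Q(x)$, and matrices $h$, $H$. The steps then are: (i) verify from the asymptotics (A) that the series defining $Q$ converges in $L_2(0,\pi)$ and that $h$, $H$ are well-defined, with $L = L(Q,h,H) \in A(\om)$; (ii) check that $L$ satisfies Assumption~1 and, by applying \eqref{asymptalpha1}--\eqref{asymptalpha3} to the constructed problem, also Assumption~2, so that $L \in A_{1,2}(\om)$; (iii) prove that the spectral data of $L$ are precisely $\Lambda$.

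The main obstacle is step (iii): one must verify that the constructed Weyl matrix really has poles exactly at $\{\la_{nq}\}$ with residues exactly $\{\al_{nq}\}$. The strategy is to use the main equation in both directions: the solution $\psi$ built from $\Lambda$ formally satisfies the relations defining the solutions of the candidate $L$; then one forms its Weyl matrix $M^L(\la)$ and shows, via contour integration in the $\la$-plane combined with condition (R) (which prevents rank drop and guarantees the residues are exactly $\al_{nq}$ rather than proper subprojections), that $M^L(\la) - \tilde M(\la)$ has the correct singular part at every $\la_{nq}$. Handling cases of coincidences $\la_{nq} = \la_{kl}$ requires the condition $\al_{nq} = \al_{kl}$ from the definition of $\mbox{Sp}$ and the bookkeeping via $\al'_{nq}$ introduced before Lemma~\ref{lem:asymptalpha1}; this is the technically delicate point where the matrix setting genuinely differs from the scalar treatment of \cite{But07, BYS13}.
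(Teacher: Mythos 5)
Your overall architecture (necessity via the asymptotic and rank lemmas plus unique solvability of the main equation; sufficiency via solving the main equation and running Algorithm~1) matches the paper, but there are two genuine gaps. The first concerns (M) by necessity. You propose to show that a null vector $f$ of $I + \tilde R(x)$ generates an entire matrix-function that must vanish identically. That is precisely the mechanism the paper reserves for the \emph{self-adjoint} case (Lemma~\ref{lem:homo}): after the contour integration one only obtains $\sum_{k,l}\gamma^{\dagger}_{kl0}(x)\gamma_{kl0}(x)\al'_{kl0}=0_m$, and one needs $\al_{kl0}=\al_{kl0}^{\dagger}\ge 0$ to split this sum termwise and conclude $\gamma(x,\la_{kl0})\al_{kl0}=0$; without positivity the argument collapses, which is exactly why (M) cannot be derived from an (E)-type condition in the non-self-adjoint setting. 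The paper's necessity proof of (M) (Theorem~\ref{thm:main}) is different and much more direct: since the true problem $L$ exists, one has \emph{both} operators $R(x)$ and $\tilde R(x)$, and the contour-integral identity \eqref{smeqR}, $\tilde R(x)-R(x)=R(x)\tilde R(x)$, together with its symmetric counterpart, exhibits $I-R(x)$ as a two-sided bounded inverse of $I+\tilde R(x)$. You need this interrelation between the two operators; injectivity obtained your way is not available here.

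The second gap is in your sufficiency step (iii). Before any contour integration can identify the singular part of the constructed Weyl matrix, one must prove that $\Phi(x,\la)$ built by \eqref{defPhinew} satisfies $V(\Phi)=0_m$, i.e.\ really is the Weyl solution of the constructed problem. In the paper this reduces to $V(\vv_{kl0})\al'_{kl0}=0_m$ for all $k,l$ (formula \eqref{smeq8}), which rests on a chain you never mention: (M) implies (E) (Lemma~\ref{lem:EM}), (E) yields completeness of the system \eqref{vvsys} (Lemma~\ref{lem:complete}), and the basis property of the adjoint system (Lemma~\ref{lem:basis*}) forces each term of the series \eqref{smeq6} to vanish. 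Without this, the claim that $M(\la)-\tilde M(\la)$ has ``the correct singular part'' is unsupported; once it is in place, the paper does not even need a separate contour argument, since the partial-fraction expansion of $\tilde M(\la)$ gives $M(\la)=\sum_{k,l}\al'_{kl0}(\la-\la_{kl0})^{-1}$ directly. Similarly, establishing that the constructed $\vv_{nqi}$, $\vv(x,\la)$, $\Phi(x,\la)$ actually solve equation \eqref{eqv} with the constructed $Q$ (Lemma~\ref{lem:lphi}) requires the derivative estimates of Lemma~\ref{lem:estpsi} and the $L_2$-analysis of $\ee_0$, $\ee$; ``feed into Algorithm~1'' glosses over this.
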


Of particular interest are those cases, when the solvability of the main equation can be proved
or easily checked, namely, the self-adjoint case, the case of finite pertrubations of the spectral data
and the case of small pertrubations \cite{But07, BYS13}. As a corollary of Theorem~\ref{thm:1},
be derive a result for the self-adjoint case: $Q = Q^{\dagger}$, $h = h^{\dagger}$, $H = H^{\dagger}$
(the symbol $\dagger$ stands for the conjugate transpose). 
Finite pertrubations and small pertrubations can also be studied analogously to the scalar case.
Note that in the self-adjoint case, the problem $L$ always belongs to the class $A_{1,2}(\om)$ with a diagonalizable $\om$
(see Section~7). Condition (M) can be proved with help of the simplier condition (E),
so we obtain the following result.

\begin{thm} \label{thm:2}
Let $\om = \om^{\dagger} \in \mathcal{D}$.
For data $\{\la_{nq}, \alpha_{nq}\}_{n \geq 0, q = \overline{1, m}} \in \mbox{Sp}$ to be
the spectral data for a certain self-adjoint problem $L \in A(\om)$ it is
necessary and sufficient to satisfy the following conditions. 

(A) The asymptotics \eqref{asymptrho}, \eqref{asymptalpha1}, 
\eqref{asymptalpha2} \eqref{asymptalpha3} are valid.

(R) The ranks of the
matrices $\al_{nq}$ coincide with the multiplicities of the corresponding
values $\la_{nq}$.

(S) All $\la_{nq}$ are real, $\alpha_{nq} = (\alpha_{nq})^\dagger$,
$\alpha_{nq} \geq 0$ for all $n \geq 0$, $q = \overline{1, m}$.
 
(E) For any row vector $\gamma(\la)$ that is entire in $\la$, and
that satisfy the estimate
$$
    \gamma(\la) = O(\exp(|\mbox{Im}\, \sqrt{\la}|\pi)), \quad |\la| \to \iy,
$$
if $\gamma(\la_{nq})\alpha_{nq} = 0$ for all $n \geq 0$, $q =
\overline{1, m}$, then $\gamma(\la) \equiv 0$.\footnote{
The letters, denoting the conditions, have the following meanings: (A) {\bf A}symptotics, 
(R) {\bf R}anks, (M) solvability of the {\bf M}ain equation, (S) {\bf S}elf-adjointness, 
(E) ``{\bf E}ntire function condition''. The conditions (C) {\bf C}ompleteness, and
(PW) {\bf P}aley-{\bf W}iener class condition, appear later (in Section 5).
}
\end{thm}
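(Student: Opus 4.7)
The plan is to derive Theorem~\ref{thm:2} from Theorem~\ref{thm:1} by checking two things in the self-adjoint setting: that every self-adjoint problem with $\omega = \omega^\dagger \in \mathcal{D}$ automatically belongs to $A_{1,2}(\omega)$ (so Assumptions~1 and~2 come for free), and that the conditions (S) and (E) together are equivalent to the solvability condition (M) of the main equation. The argument splits naturally into necessity and sufficiency.

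For necessity, let $L$ be self-adjoint. The matrix Weyl function $M(\la)$ is a matrix Herglotz/Nevanlinna function, so its poles $\la_{nq}$ are real and simple and the residues $\alpha_{nq}$ are Hermitian non-negative; this gives (S) as well as Assumption~1. Assumption~2 then follows from \eqref{asymptalpha3} combined with $\alpha'_{nq} = (\alpha'_{nq})^\dagger \geq 0$ and $\al'_{nq} \leq \al_n$ in the sense of Hermitian matrices, so each $\al_{nq}$ is uniformly norm-bounded. Thus $L \in A_{1,2}(\om)$, and Theorem~\ref{thm:1} yields (A) and (R). For (E), if $\gamma(\la)$ is an entire row vector of exponential type at most $\pi$ with $\gamma(\la_{nq})\alpha_{nq} = 0$ for every $n, q$, one expands $\gamma$ against the complete system $\{\vv(x,\la_{nq})\alpha_{nq}\}$ in an appropriate $L_2$-space (the completeness being a standard consequence of self-adjointness and the eigenfunction expansion theorem), or equivalently invokes a matrix Paley--Wiener theorem; the vanishing of all Fourier coefficients then forces $\gamma \equiv 0$.

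For sufficiency, given data satisfying (A), (R), (S), (E), the Hermitian non-negativity in (S) combined with the sharp asymptotics \eqref{asymptalpha3} implies that the norms of the $\alpha_{nq}$ are uniformly bounded, so Assumption~2 holds. The crux is to deduce (M) from (E) for a suitable self-adjoint model problem $\tilde L \in A_{1,2}(\om)$, e.g.\ $\tilde Q(x) = \tfrac{2}{\pi}\omega$, $\tilde h = \tilde H = 0_m$. The operator $\tilde R(x)$ is compact on the Banach space $B$, so by the Fredholm alternative it suffices to prove injectivity of $I + \tilde R(x)$. A standard manipulation of the main equation \eqref{main0} converts any nontrivial kernel element into a nonzero entire row vector $\gamma(\la)$ of exponential type at most~$\pi$ satisfying $\gamma(\la_{nq})\alpha_{nq} = 0$ for all $n, q$, contradicting (E). Hence (M) holds, Theorem~\ref{thm:1} delivers $L \in A_{1,2}(\om)$ with the prescribed spectral data, and it remains to verify that the recovered $L$ is actually self-adjoint.

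The main obstacle is the implication (E) $\Rightarrow$ (M). It rests on two ingredients: the compactness of $\tilde R(x)$ on $B$, extracted from the decay rates in (A) and the structure of the kernel built from $\tilde \vv$, $\tilde S$ and the two spectral data sets; and the identification of kernel elements of $I + \tilde R(x)$ with entire row vectors of exponential type $\pi$ that vanish on the spectrum in the sense required by (E) — this step must carefully unpack \eqref{main0} and use the definitions of $\la_{nqi}$ and $\psi(x)$. A minor but nontrivial last step is verifying that the reconstructed operator is self-adjoint: this is expected to follow from the uniqueness clause in Theorem~\ref{thm:1} together with the observation that if a self-adjoint problem with the given spectral data exists, its reconstruction via Algorithm~1 must coincide with the one produced, while the conjugate-transposed data agree with the original by virtue of (S).
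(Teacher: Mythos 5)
Your overall architecture matches the paper's: derive Theorem~\ref{thm:2} from Theorem~\ref{thm:1} by showing that self-adjointness forces $L \in A_{1,2}(\om)$ and that (S) together with (E) substitutes for (M). However, the two steps you describe as ``standard'' are precisely where the real work lies, and as stated both have genuine gaps. First, in the necessity direction, your proof of (E) by expanding $\gamma$ against the complete system $\{\vv(x,\la_{nq})\al_{nq}\}$ only works for $\gamma$ of the form $\int_0^\pi f^{\dagger}(x)\vv(x,\la)\,dx$ with $f \in L_2$, i.e.\ it establishes the strictly weaker Paley--Wiener condition (PW), not (E); the paper is explicit that $(E) \Rightarrow (PW)$ but not conversely. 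The paper's Lemma~\ref{lem:E} proves (E) for an arbitrary entire row vector of exponential type $\pi$ by forming $f(\la) = \gamma(\la)(V(\vv(x,\la)))^{-1}$, showing the residues at all $\la_{nq}$ vanish (using $\mathrm{rank}\,V(\vv(x,\la_{nq})) + \mathrm{rank}\,\al_{nq} = m$ and the simplicity of the poles of $(V(\vv))^{-1}$), and then applying the estimate $\|(V(\vv))^{-1}\| \le C|\rho|^{-1}e^{-|\tau|\pi}$ on $G_\de$ together with the maximum principle and Liouville's theorem. Your argument does not reach this conclusion.

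Second, in the sufficiency direction, the ``standard manipulation'' converting a kernel element $\ga$ of $I + \tilde R(x)$ into an entire row vector annihilated by the $\al_{nq}$ is exactly where condition (S) enters, and you never use it. From the homogeneous equation one gets an entire extension $\gamma(x,\la)$ via \eqref{defgamma}, but the relations $\gamma(x,\la_{nq0})\al_{nq0} = 0$ do \emph{not} follow formally: the paper forms $\mathscr{B}(x,\la) = \gamma^{\dagger}(x,\bar\la)\Gamma(x,\la)$, shows $\frac{1}{2\pi i}\int_{\Gamma_N}\mathscr{B}\,d\la \to 0$, and computes the residues to obtain $\sum_{k,l}\gamma^{\dagger}_{kl0}(x)\gamma_{kl0}(x)\al'_{kl0} = 0_m$; only because each summand is of the form $A^{\dagger}A\,\al'_{kl0}$ with $\al'_{kl0} = (\al'_{kl0})^{\dagger} \ge 0$ can one conclude termwise vanishing. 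This is the heart of Lemma~\ref{lem:homo}, and without it (E) alone does not yield (M) even in the scalar case. Finally, your closing appeal to ``the uniqueness clause in Theorem~\ref{thm:1}'' to establish self-adjointness of the reconstructed operator is circular as phrased, since it presupposes the existence of a self-adjoint problem with the given data, which is what is being proved; any argument here must extract $Q = Q^{\dagger}$, $h = h^{\dagger}$, $H = H^{\dagger}$ directly from the reconstruction formulas \eqref{relQ} and the symmetries in (S).
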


Note that Assumption~2 in (A) is not necessary in the self-adjoint case, because
it follows from \eqref{asymptalpha3} and the condition $\al_{nq} \ge 0$.

As we have already mentioned, the characterization of the spectral data of the self-adjoint
matrix Sturm-Liouville operator was obtained earlier in \cite{Bond11}.
But the work \cite{Bond11} contains a technical mistake in asymptotics of the weight matrices.
In this paper, using the method of \cite{Bond11}, we obtain correct necessary and sufficient conditions for the self-adjoint case 
(Theorem~\ref{thm:2}) as a corollary of 
a more general result (Theorem~\ref{thm:1}).

The paper is organized as follows. 
At first we study algebraic and analytical properties of the spectral characteristics
(\textit{Section~2}) and prove Lemmas~\ref{lem:asymptalpha1} and
\ref{lem:asymptalpha2} with asymptotic formulas for the weight matrices (\textit{Section~3}).
In \textit{Section 4}, we derive the main equation in a suitable Banach space
and provide a constructive algorithm for the solution of Inverse Problem~1.     
We also prove the unique solvability of the main equation, and this finish the proof
of the necessity in Theorem~1.
Further, in \textit{Section 5}, we discuss the connection between the conditions
(M), (E), (C) and (PW). Namely, they are connected as follows: $(M) \Rightarrow (E) \Rightarrow (PW) \Leftrightarrow (C)$.
\textit{Section 6} devoted to the sufficiency in Theorem~\ref{thm:1}.
In \textit{Section 7}, we collect the results concerning the self-adjoint case, and prove
Theorem~\ref{thm:2}. We also give a reformulation of Theorem~\ref{thm:2}, using
the completeness of some system of vector functions (C).

\textbf{Notation.}
Along with $L$ we consider the boundary value problem $L^* = L^*(Q(x), h, H)$ in the form
\begin{equation} \label{defL*}
\begin{array}{c}
\ell^* Z := -Z'' + Z Q(x) = \la Z, \quad x \in (0, \pi), \\
U^*(Z) := Z'(0) - Z(0)h = 0, \quad V^*(Z) := Z'(\pi) + Z(\pi) H = 0,
\end{array}
\end{equation}
where $Z$ is a row vector. Let $\tilde L^* = L^*(\tilde Q(x), \tilde h, \tilde H)$.
We agree that if a symbol $\ga$ denotes an object related to $L$, then $\ga^*$ and $\tilde \ga^*$
denote corresponding objects related to $L^*$ and $\tilde L^*$, respectively.

We consider the space of complex column $m$-vectors $\mathbb{C}^m$ with the norm
$$
    \| Y \| = \max_{1 \le j \le m} |y_j|, \quad Y = [y_j]_{j = \overline{1, m}},
$$
the space of complex $m \times m$ matrices $\mathbb{C}^{m \times m}$ with the corresponding induced norm
\eqref{matrixnorm}, and the space of row vectors $\mathbb{C}^{m, T}$.
We use the spaces $L_2((0, \pi), \mathbb{C}^m)$, 
$L_2((0, \pi), \mathbb{C}^{m, T})$ and $L_2((0, \pi), \mathbb{C}^{m \times m})$
of column vectors, row vectors and matrices, respectively, with entries from $L_2(0, \pi)$.
The Hilbert spaces $L_2((0, \pi), \mathbb{C}^m)$ and $L_2((0, \pi), \mathbb{C}^{m, T})$
are equipped with the following scalar products
$$
    (Y, Z) = \int_0^{\pi} Y^{\dagger}(x) Z(x) \, dx = \int_0^{\pi} \sum_{j = 1}^m \bar{y}_j(x) z_j(x) \, dx,
$$
$$
    (Y, Z) = \int_0^{\pi} Y(x) Z^{\dagger}(x) \, dx,
$$
respectively.
Denote $\langle Y, Z \rangle = Y' Z - Y Z'$.

Put $\rho := \sqrt \la$, $\mbox{Re} \, \rho \geq 0$, $\tau :=
\mbox{Im}\, \rho$.
In estimates and asymptotics, we use the same symbol $C$ for different constants
independent of $x$, $\rho$, etc.

\bigskip

{\large \bf 2. Properties of the spectral data}\\

The results of this section are valid for any boundary value problem $L$, satisfying
Assumption~1. First, we prove an alternative formulation of this assumption.

\begin{lem} \label{lem:Vphi}
Assumption~1 is equivalent to the condition, that all the poles of
the matrix function $(V(\vv(x, \la)))^{-1}$ in the $\la$-plane are simple.
\end{lem}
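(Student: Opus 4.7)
Lemma~\ref{lem:Vphi} is an equivalence of two conditions on the meromorphic matrix functions $(V(\vv))^{-1}$ and $M(\la)$. The easy direction is immediate from \eqref{formM}: if every pole of $(V(\vv))^{-1}$ is simple, then the factorisation $M(\la) = -(V(\vv))^{-1} V(S)$ together with the fact that $V(S(\cdot, \la))$ is entire in $\la$ shows that $M$ also has only simple poles, since multiplication by an entire matrix cannot raise the order of a pole.

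For the converse, the plan is a short Laurent-coefficient contradiction. Suppose $M(\la)$ has only simple poles, but $(V(\vv(\cdot, \la)))^{-1}$ has a pole of some order $l \ge 2$ at a point $\la_0$, with leading Laurent coefficient $N_{-l} \neq 0$. Matching the coefficient of $(\la - \la_0)^{-l}$ in the entire identity $(V(\vv))^{-1} V(\vv) \equiv I_m$ yields $N_{-l} V(\vv(\cdot, \la_0)) = 0$. Since $V(S(\cdot, \la))$ is entire and $M$ has no Laurent terms of order $\le -2$ at $\la_0$ by assumption, matching the coefficient of $(\la - \la_0)^{-l}$ in $M = -(V(\vv))^{-1} V(S)$ gives $N_{-l} V(S(\cdot, \la_0)) = 0$. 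Thus $N_{-l}$ annihilates on the left the $m \times 2m$ block matrix $\bigl[\, V(\vv(\cdot, \la_0)) \mid V(S(\cdot, \la_0)) \,\bigr]$.

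The remaining task is to check that this block has full row rank $m$, which then gives $N_{-l} = 0$ and the desired contradiction. Any column-vector solution $Y$ of \eqref{eqv} at $\la_0$ has the unique representation $Y(x) = \vv(x, \la_0) Y(0) + S(x, \la_0) U(Y)$, so the linear map $(a, c) \mapsto V(\vv(\cdot, \la_0))a + V(S(\cdot, \la_0))c$ from $\mathbb{C}^{2m}$ to $\mathbb{C}^m$ factors through the isomorphism $(a, c) \mapsto Y$ onto the $2m$-dimensional solution space, followed by $Y \mapsto V(Y) = Y'(\pi) + H Y(\pi)$. Since the monodromy is invertible, $(Y(\pi), Y'(\pi))$ ranges over all of $\mathbb{C}^{2m}$ as $Y$ varies over solutions, so $V$ is surjective onto $\mathbb{C}^m$ and the block matrix has rank $m$.

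The main obstacle is arguably just isolating the two Laurent-coefficient identities and the surjectivity of $V$ on the solution space; once these are in place the rest is immediate linear algebra, and no features specific to the matrix setting are needed beyond what appears above.
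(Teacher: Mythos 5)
Your proof is correct and follows essentially the same route as the paper: both directions hinge on the two Laurent-coefficient identities $N_{-l}V(\vv(\cdot,\la_0))=0$ and $N_{-l}V(S(\cdot,\la_0))=0$, followed by the observation that $V$ is surjective on the $2m$-dimensional solution space at $\la_0$. The paper realizes that surjectivity by exhibiting the explicit solution with $\psi(\pi,\la_0)=0$, $\psi'(\pi,\la_0)=A_{-k}^{\dagger}$, whereas you invoke invertibility of the monodromy; this is only a cosmetic difference.
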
 

\begin{proof}
Suppose that $\la_0$ is a nonsimple pole of $(V(\vv))^{-1}$, namely
$$
    (V(\vv))^{-1} = \frac{A_{-k}}{(\la - \la_0)^k} + \dots + \frac{A_{-1}}{\la - \la_0} + A_0 + \dots, \quad k > 1, \quad A_k \ne 0_m,
$$ 
in a neirborhood of $\la_0$. The matrix-function $V(S(x, \la))$ is analytical:
$V(S(x, \la)) = V(S(x, \la_0)) + \frac{d}{d\la} V(S(x, \la_0)) (\la - \la_0) + \dots$.
If $\la_0$ is a simple pole of the Weyl matrix $M(\la)$, then
$A_{-k} V(S(x, \la_0)) = 0_m$. The matrix-function $I = (V(\vv))^{-1} V(\vv)$ is entire,
therefore we also have $A_{-k} V(\vv(x, \la_0)) = 0_m$. 
Since the columns of the matrices $\vv(x, \la)$ and $S(x, \la)$ form a fundamental system of solutions
of equation \eqref{eqv}, every solution $\psi(x, \la)$ of this equation can be represented as their linear combination:
$\psi(x, \la) = \vv(x, \la) A + S(x, \la) B$, and it also satisfies the relation $A_{-k} V(\psi(x, \la_0)) = 0$.
But if we choose the solution $\psi(x, \la_0)$, satisfying the initial conditions $\psi(\pi, \la_0) = 0_m$,
$\psi'(\pi, \la_0) = A_{-k}^{\dagger}$, we get $A_{-k} V(\psi(x, \la_0)) \ne 0$. The contradiction shows,
that the simplicity of the poles of $(V(\vv))^{-1}$ follows from the simplicity of the poles of $M(\la)$.
The inverse is obvious. 
\end{proof}

\begin{lem}
The zeros of the characteristic function $\Delta(\la)$ coincide with the eigenvalues of the
boundary value problem $L$. 
The multiplicity of each zero $\la_0$ of the function
$\Delta(\la)$ equals to the multiplicity of the corresponding eigenvalue
(by the multiplicity of the eigenvalue we mean the number of the
corresponding linearly independent vector eigenfunctions).
\end{lem}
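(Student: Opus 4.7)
The plan is to address the two claims in turn. For the correspondence between zeros of $\Delta$ and eigenvalues, note that the columns of $\vv(x,\la)$ are $m$ linearly independent solutions of $\ell Y = \la Y$ satisfying $U(Y) = 0$ (independence follows from $\vv(0,\la) = I_m$). Since $U$ imposes $m$ independent conditions on the $2m$-dimensional solution space of $\ell Y = \la Y$, every solution with $U(Y) = 0$ has the unique form $Y(x) = \vv(x,\la)c$ with $c \in \mathbb{C}^m$. The remaining condition $V(Y) = 0$ then becomes $V(\vv(x,\la))c = 0$. Thus $\la_0$ is an eigenvalue iff $\Delta(\la_0) = \det V(\vv(x,\la_0)) = 0$, and the geometric multiplicity equals $r := \dim \ker V(\vv(x,\la_0))$.

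For the multiplicity claim I need to show $\mathrm{ord}_{\la_0}\Delta = r$. The approach is to choose constant invertible matrices $P, R$ so that $P\,V(\vv(x,\la_0))\,R = \mathrm{diag}(I_{m-r},\,0_r)$, and to write
\[
P\,V(\vv(x,\la))\,R = \begin{pmatrix} A(\la) & B(\la) \\ C(\la) & D(\la) \end{pmatrix},
\]
with $A(\la_0) = I_{m-r}$ and $B, C, D$ vanishing at $\la_0$. Since $A$ remains invertible for $\la$ near $\la_0$, the Schur complement formula gives
\[
\det(P\,V(\vv)\,R) = \det A(\la)\,\det S(\la), \qquad S(\la) := D(\la) - C(\la)\,A(\la)^{-1} B(\la).
\]
Expanding near $\la_0$ yields $S(\la) = (\la - \la_0)D_1 + O((\la-\la_0)^2)$, so $\det S(\la) = (\la-\la_0)^r \det D_1 + O((\la-\la_0)^{r+1})$; hence $\mathrm{ord}_{\la_0}\Delta \ge r$, with equality iff $\det D_1 \ne 0$.

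The main obstacle is ruling out $\det D_1 = 0$, which is precisely where Assumption~1 enters via Lemma~\ref{lem:Vphi}. Since $(V(\vv))^{-1}$ has at most a simple pole at $\la_0$, so does $(P\,V(\vv)\,R)^{-1} = R^{-1}(V(\vv))^{-1} P^{-1}$; by the block-inverse formula its lower-right $r \times r$ block is exactly $S^{-1}(\la)$. If $D_1$ were singular, then $S^{-1}(\la) = (\la-\la_0)^{-1}(D_1 + O(\la-\la_0))^{-1}$ would acquire a pole of order at least $2$ at $\la_0$, because $\det(D_1 + O(\la-\la_0))$ vanishes there. This contradicts the simple-pole property, forcing $\det D_1 \ne 0$ and therefore $\mathrm{ord}_{\la_0}\Delta = r$, in agreement with the geometric multiplicity.
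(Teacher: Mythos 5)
Your proof is correct, and it reaches the conclusion by a genuinely different route from the paper's. The first half (eigenfunctions are exactly $\vv(x,\la_0)c$ with $V(\vv(x,\la_0))c=0$, so the geometric multiplicity is $r=\dim\ker V(\vv(x,\la_0))$ and $\mathrm{ord}_{\la_0}\Delta\ge r$) matches the paper's steps 1--2, which normalize by a one-sided constant invertible factor instead of your two-sided $P,R$. The hard inequality is where you diverge: the paper writes $\Delta_1=(\la-\la_0)^k\Delta_2$ by dividing the $k$ degenerate columns by $\la-\la_0$, assumes $\Delta_2(\la_0)=0$ for contradiction, extracts the linear relation \eqref{smeq1}, and feeds the resulting pair $(a,b)$ into the Agranovich--Marchenko criterion \eqref{factAM} quoted from \cite[Lemma 2.2.1]{AM60}; you instead compute the exact order of vanishing via the Schur complement, $\det S=(\la-\la_0)^r\bigl(\det D_1+O(\la-\la_0)\bigr)$, and rule out $\det D_1=0$ by identifying $S^{-1}$ with the lower-right block of $R^{-1}(V(\vv))^{-1}P^{-1}$, which would otherwise have a double pole. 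Both arguments ultimately rest on the same input --- simplicity of the poles of $(V(\vv))^{-1}$, i.e.\ Assumption~1 via Lemma~\ref{lem:Vphi} --- and indeed your condition $\det D_1\ne 0$ is equivalent to the ``no Jordan chain of length $2$'' statement \eqref{factAM}. What your version buys is self-containedness (no external citation) and an explicit quantitative formula $\mathrm{ord}_{\la_0}\Delta=r+\mathrm{ord}_{\la_0}\det\bigl(D_1+O(\la-\la_0)\bigr)$; what it quietly uses, and you should say in one line each, is that $\Delta\not\equiv 0$ (so $(V(\vv))^{-1}$ and $S^{-1}$ are meromorphic; this follows from the asymptotics \eqref{asymptVphi}) and that an analytic matrix family $T(\la)$ with $\det T(\la_0)=0$ cannot have an analytic inverse at $\la_0$ (since $T(\la_0)T^{-1}(\la_0)=I$ would force invertibility), which is the precise reason $S^{-1}$ acquires a pole of order at least $2$ when $D_1$ is singular.
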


\medskip

\begin{proof} 1. Let $\la_0$ be an eigenvalue of $L$, and let $Y^0$
be an eigenfunction corresponding to $\la_0$. Let us show that
$Y^0(x) = \vv(x, \la_0) Y^0(0)$. Clearly, $Y^0(0) = \vv(0, \la)
Y^0(0)$. It follows from $U(Y^0) = 0$ that ${Y^0}'(0) = h Y^0(0) =
\vv(0, \la) Y^0(0)$. Thus, $Y^0(x)$ and $\vv(x, \la_0)Y^0(0)$ are
the solutions for the same initial value problem for equation \eqref{eqv}.
Consequently, they are equal.

2. Let us have exactly $k$ linearly independent eigenfunctions
$Y^1$, $Y^2$, \ldots, $Y^k$ corresponding to the eigenvalue
$\la_0$. Choose the invertible $m \times m$ matrix $C$ such that
the first $k$ columns of $\vv(x, \la_0) C$ coincide with the
eigenfunctions. Consider $Y(x, \la) := \vv(x, \la) C$, $Y(x, \la)
= [Y_q(x, \la)]_{q = \overline{1, m}}$, $Y_q(x, \la_0) = Y^q(x),
\, q = \overline{1, k}$. Clearly, the zeros of $\Delta_1(\la)
:= \det V(Y) = \det V(\vv) \cdot \det C$ coincide with the zeros
of $\Delta(\la)$ counting with their multiplicities. Note that
$\la = \la_0$ is a zero of each of the columns $V(Y_1)$, \dots,
$V(Y_k)$. Hence, if $\la_0$ is the zero of the determinants
$\Delta_1(\la)$ and $\Delta(\la)$ with the multiplicity $p$, then $p
\geq k$.

3. Suppose that $p > k$. Rewrite $\Delta_1(\la)$ in the form
$$
    \Delta_1(\la) = (\la - \la_0)^k \Delta_2(\la),
$$
$$
    \Delta_2(\la) = \det \left[\frac{V(Y_1)}{\la-\la_0},
    \ldots, \frac{V(Y_k)}{\la-\la_0}, V(Y_{k + 1}), \ldots, V(Y_m)\right].
$$
In view of our supposition, we have $\Delta_2(\la_0) = 0$, i.\,e. there exist
not all zero coefficients $\alpha_q$, $q = \overline{1, m}$
such that
\begin{equation} \label{smeq1}
   \sum_{q = 1}^k \alpha_q \frac{d V(Y_q(x, \la_0))}{d\la} +
   \sum_{q = k + 1}^m \alpha_q V(Y_q(x, \la_0)) = 0.                                         
\end{equation}

If $\alpha_q = 0$ for $q = \overline{1, k}$, then the function
$$
    Y^+(x, \la) := \sum_{q = k + 1}^{m} \alpha_q Y_q(x, \la)
$$
for $\la = \la_0$ is an eigenfunction, corresponding to $\la_0$,
that is linearly independent with $Y^q$, $q = \overline{1, k}$.
Since the eigenvalue $\la_0$ has exactly $k$ corresponding
eigenfunctions, we arrive at a contradiction.

Otherwise we consider the function
$$
    Y^+(x, \la) := \sum_{q = 1}^{k} \alpha_q Y_q(x, \la) +
    (\la - \la_0) \sum_{q = k + 1}^{m} \alpha_q Y_q(x, \la).
$$
Now we plan to use the simplicity of the poles of $(V(\vv))^{-1}$,
following from Assumption~1 by Lemma~\ref{lem:Vphi}.
Recall the following well-known fact (see \cite[Lemma 2.2.1]{AM60}):

\smallskip

{\it
The inverse $(V(\vv))^{-1}$ has a simple pole at $\la = \la_0$ if and only if the relations at $\la = \la_0$:
\begin{equation} \label{factAM}
\begin{array}{c}
 V(\vv)a = 0, \\
 \frac{d}{d \la} V(\vv)a + V(\vv)b = 0,
\end{array}
\end{equation}
where $a$ and $b$ are constant vectors, yield $a = 0$.
}

\smallskip

The function $Y^+$ has the form $Y^+(x, \la) = V(\vv) a + (\la - \la_0) V(\vv) b$, $a \ne 0$.
In view of \eqref{smeq1}, the relations \eqref{factAM} are satisfied, and we arrive at a contradiction
with Assumption~1. Thus, $\Delta_2(\la_0) = 0$ and $p = k$. 

\end{proof}

\begin{lem} \label{lem:ranks}
The ranks of the residue-matrices of the Weyl matrix $M(\la)$ 
coincide with the multiplicities of the corresponding eigenvalues of $L$.
\end{lem}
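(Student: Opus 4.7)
My plan is to start from the factorization \eqref{formM}: $M(\la) = -(V(\vv))^{-1} V(S)$. Since $V(S(x,\la))$ is entire in $\la$ and, by Assumption~1 together with Lemma~\ref{lem:Vphi}, $(V(\vv))^{-1}$ has only simple poles, the residue at an eigenvalue $\la_0$ factors as $\al_0 = -\be \cdot V(S(x,\la_0))$, where $\be := \Res_{\la = \la_0}(V(\vv))^{-1}$. Let $k$ denote the multiplicity of $\la_0$; by the preceding lemma, $k = \dim \ker V_0$ with $V_0 := V(\vv(x,\la_0))$. I will establish $\operatorname{rank} \al_0 = k$ in two steps: first $\operatorname{rank}\be = k$, then multiplication by $V(S(x,\la_0))$ does not lower the rank.

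For the first step, expanding both $V(\vv)\cdot(V(\vv))^{-1} = I$ and $(V(\vv))^{-1}\cdot V(\vv) = I$ around $\la_0$ yields the relations $V_0\be = 0$, $\be V_0 = 0$, and $V_0 B_0 + V_1\be = I$, where $V_1 := \tfrac{d}{d\la} V(\vv)|_{\la = \la_0}$ and $B_0$ is the analytic part of $(V(\vv))^{-1}$. The first gives $\operatorname{im}\be \subseteq \ker V_0$, so $\operatorname{rank}\be \le k$. For the opposite inequality I invoke the Agranovich--Marchenko criterion \eqref{factAM} already used in the previous proof: under Assumption~1 it says exactly that $V_1$ descends to an injection $\ker V_0 \hookrightarrow \mathbb{C}^m/\operatorname{im}V_0$, which by equality of dimensions is an isomorphism. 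Rewriting the third relation as $V_1\be \equiv I \pmod{\operatorname{im} V_0}$ shows that the composition $\mathbb{C}^m \xrightarrow{\be}\ker V_0 \xrightarrow{V_1} \mathbb{C}^m \twoheadrightarrow \mathbb{C}^m/\operatorname{im}V_0$ coincides with the canonical quotient, hence is surjective of rank $k$; since its last two maps compose to the above isomorphism when restricted to $\ker V_0 \supseteq \operatorname{im}\be$, this forces $\operatorname{rank}\be = k$.

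For the second step, the identity $\be V_0 = 0$ gives $\operatorname{im}V_0 \subseteq \ker\be$, and a dimension count ($\dim\operatorname{im}V_0 = m - k = \dim\ker\be$) upgrades this to $\ker\be = \operatorname{im}V_0$. Hence $\operatorname{rank}\al_0$ equals the rank of the composition $\mathbb{C}^m \xrightarrow{V(S(x,\la_0))} \mathbb{C}^m \twoheadrightarrow \mathbb{C}^m/\operatorname{im}V_0$, and I need to show it equals $k$, i.e.\ that this composition is surjective, or equivalently $\operatorname{im}V_0 + \operatorname{im}V(S(x,\la_0)) = \mathbb{C}^m$. Since the columns of $\vv(x,\la_0)$ and $S(x,\la_0)$ form a fundamental system of solutions of \eqref{eqv} at $\la = \la_0$, this sum coincides with $\{V(Y) : \ell Y = \la_0 Y\}$, and for any $c \in \mathbb{C}^m$ the solution $Y$ with terminal data $Y(\pi) = 0$, $Y'(\pi) = c$ satisfies $V(Y) = c$. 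Thus $\operatorname{rank}\al_0 = k$.

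The likely main obstacle is the lower bound $\operatorname{rank}\be \ge k$: this is where Assumption~1 enters essentially, and it is the step that a naive dimension count alone cannot see. Routing it through the algebraic reformulation \eqref{factAM} keeps the proof within the toolkit already introduced in the paper and avoids heavier machinery such as the local Smith normal form of $V(\vv(\la))$ at $\la_0$; the remainder of the argument is dimension bookkeeping together with one explicit construction of a solution witnessing surjectivity of $V$.
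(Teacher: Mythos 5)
Your argument is correct and complete. Note that the paper itself does not prove this lemma: it defers to \cite[Lemma 4]{Bond11} with the remark that under Assumption~1 the self-adjoint proof carries over verbatim. Your write-up is therefore a self-contained substitute built entirely from ingredients already on the page: the factorization \eqref{formM} giving $\al_0 = -\be\, V(S(x,\la_0))$ with $\be = \Res_{\la=\la_0}(V(\vv))^{-1}$, the simplicity of the poles of $(V(\vv))^{-1}$ from Lemma~\ref{lem:Vphi}, the Agranovich--Marchenko criterion \eqref{factAM}, and the same ``prescribe data at $x=\pi$'' construction that the paper uses in the proof of Lemma~\ref{lem:Vphi}. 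The two delicate points are both handled correctly: the lower bound $\operatorname{rank}\be \ge k$ is exactly where Assumption~1 must enter, and you extract it from the Laurent coefficient identity $V_0B_0 + V_1\be = I$ combined with the injectivity of the induced map $\ker V_0 \to \mathbb{C}^m/\operatorname{im}V_0$ that \eqref{factAM} provides (an isomorphism by dimension count); and the passage from $\operatorname{rank}\be = k$ to $\operatorname{rank}\al_0 = k$ correctly reduces to the surjectivity statement $\operatorname{im}V_0 + \operatorname{im}V(S(x,\la_0)) = \mathbb{C}^m$, which your terminal-value construction verifies. The identification of the multiplicity with $\dim\ker V(\vv(x,\la_0))$ is justified by part 1 of the proof of the preceding lemma. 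I see no gap; if anything, this is the proof the paper implicitly relies on, made explicit and independent of the self-adjoint structure.
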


Under Assumption~1, the proof of Lemma~\ref{lem:ranks} does not differ from the proof in the self-adjoint case
(see \cite[Lemma 4]{Bond11}.

Now let us consider the problem $L^*$, defined by \eqref{defL*}. 
It is easy to check that
\begin{equation} \label{BC*}
\langle Z, Y \rangle_{x = 0} = U^*(Z) Y(0) - Z(0) U(Y), \quad
\langle Z, Y \rangle_{x = \pi} = V^*(Z) Y(\pi) - Z(\pi) V(Y).
\end{equation}
where $\langle Z, Y \rangle = Z'Y - ZY'$.
If $Y(x, \la)$ and $Z(x, \la)$ satisfy the equations
$\ell Y(x, \la) = \la Y(x, \la)$, $\ell Z(x, \mu) = \mu Z(x, \mu)$, respectively, then
\begin{equation} \label{wron}
\frac{d}{dx} \langle Z(x, \mu), Y(x, \la) \rangle = (\la - \mu) Z(x, \mu) Y(x, \la),
\end{equation}

Introduce the matrices 
$\vv^*(x, \la)$, $S^*(x, \la)$ and $\Phi^*(x, \la)$, satisfying the equation 
$\ell^* Z = \la Z$ and the conditions
$$
    \vv^*(0, \la) = {S^*}'(0, \la) = U^*(\Phi^*) = I_m, \quad
    {\vv^*}'(0, \la) = h, \quad S^*(0, \la) = V^*(\Phi^*) = 0_m.
$$
Denote $M^*(\la) := \Phi^*(0, \la)$.

In view of \eqref{wron}, the expression $\langle \Phi^*(x, \la), \Phi(x, \la) \rangle$ does not depend on $x$.
Using \eqref{BC*}, we obtain
$$
    \langle \Phi^*(x, \la), \Phi(x, \la) \rangle_{x = 0} = M(\la) - M^*(\la), \quad
    \langle \Phi^*(x, \la), \Phi(x, \la) \rangle_{x = \pi} = 0_m.   
$$
Hence 
\begin{equation} \label{eqM}
    M(\la) \equiv M^*(\la).
\end{equation}
and consequently, the spectral data of the problems $L$ and $L^*$ coincide.

\begin{lem} \label{lem:sym} 
Let $\la_0$, $\la_1$ be eigenvalues of $L$,
$\la_0 \neq \la_1$, and $\alpha_i =
\mathop{\mathrm{Res}}\limits_{\la = \la_{i}}M(\la)$, $i = 0, 1$.
The following relations hold
$$
    \alpha_0 \int\limits_0^{\pi} \vv^*(x, \la_0) \vv(x, \la_0) \, dx \, \alpha_0 = \alpha_0,
$$
$$
    \alpha_0 \int\limits_0^{\pi} \vv^*(x, \la_0) \vv(x, \la_1) \, dx \, \alpha_1 = 0_m.
$$
\end{lem}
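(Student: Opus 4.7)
The plan is to derive and then exploit via residue calculus the bilinear identity
\begin{equation*}
\int_0^\pi \Phi^*(x,\mu)\,\Phi(x,\la)\,dx \;=\; \frac{M^*(\mu) - M(\la)}{\la - \mu}.
\end{equation*}
To obtain this identity I would apply \eqref{wron} with $Y = \Phi(x,\la)$ and $Z = \Phi^*(x,\mu)$ and integrate from $0$ to $\pi$. The boundary terms produced by \eqref{BC*} vanish at $x = \pi$ because $V(\Phi) = 0_m$ and $V^*(\Phi^*) = 0_m$, while at $x = 0$ the normalizations $U(\Phi) = U^*(\Phi^*) = I_m$ reduce $\langle\Phi^*,\Phi\rangle_{x=0}$ to $M(\la) - M^*(\mu)$.

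Next I would compute $\Res_{\la = \la_0}\Res_{\mu = \la_1}$ of both sides. Writing $\Phi = S + \vv M$ and $\Phi^* = S^* + M^* \vv^*$, Assumption~1 together with \eqref{eqM} yields the simple-pole residues $\Res_{\la=\la_0}\Phi(x,\la) = \vv(x,\la_0)\al_0$ and $\Res_{\mu=\la_1}\Phi^*(x,\mu) = \al_1 \vv^*(x,\la_1)$. Because the integrand is jointly meromorphic and $[0,\pi]$ is compact, residue extraction commutes with the integral, and the double residue of the left-hand side equals
\begin{equation*}
\int_0^\pi \al_1 \vv^*(x,\la_1)\,\vv(x,\la_0)\,\al_0\,dx.
\end{equation*}

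To obtain the first relation of the lemma I would formally set $\la_1 = \la_0$ in the computation on the right. Substituting $M(\la) = \al_0/(\la-\la_0) + A(\la)$ and $M^*(\mu) = \al_0/(\mu-\la_0) + A^*(\mu)$ (with the same residue, by \eqref{eqM}) and rearranging gives
\begin{equation*}
\frac{M^*(\mu) - M(\la)}{\la - \mu} = \frac{\al_0}{(\mu - \la_0)(\la - \la_0)} + \frac{A^*(\mu) - A(\la)}{\la - \mu},
\end{equation*}
whose double residue at $(\la_0,\la_0)$ is manifestly $\al_0$, matching the left-hand side. For the second relation, with $\la_0 \ne \la_1$, the residue in $\mu$ at $\mu = \la_1$ produces $\al_1/(\la - \la_1)$, which is analytic at $\la = \la_0$ and therefore contributes zero; applying this computation with the roles of $\la_0$ and $\la_1$ interchanged and relabeling delivers the stated orthogonality identity $\al_0\int \vv^*(x,\la_0)\vv(x,\la_1)\,dx\,\al_1 = 0_m$.

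The step that requires the most care is the coincident case: the quotient $(M^*(\mu)-M(\la))/(\la-\mu)$ is actually regular across the diagonal $\la = \mu$, so one must isolate the doubly singular piece $\al_0/[(\mu-\la_0)(\la-\la_0)]$ from a remainder that is analytic in $\mu - \la$. The clean cancellation producing exactly $\al_0$ hinges on \eqref{eqM}, which equates not merely the poles but the entire singular parts of $M$ and $M^*$ at each eigenvalue.
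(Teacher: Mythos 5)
Your argument is correct, but it follows a genuinely different route from the paper. The paper never introduces the bilinear form of the Weyl solutions: it works directly with $\vv$, $\vv^*$ and $S$, writing $\int_0^{\pi}\vv^*(x,\la_0)\vv(x,\la)\,dx$ as a difference quotient of $\langle\vv^*,\vv\rangle|_0^{\pi}$ via \eqref{BC*}--\eqref{wron}, and then kills the unwanted boundary terms using the algebraic identities $V(\vv(x,\la_0))\al_0=0_m$ and $\al_0 V^*(\vv^*(x,\la_0))=0_m$ (obtained from the entirety of $V(\vv)M(\la)$), before evaluating the surviving term $\al_0\vv^*(\pi,\la_0)V(S(x,\la_0))$ through the simple-pole limit of $(\la-\la_0)(V(\vv))^{-1}V(S)$. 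Your route instead establishes the resolvent-type identity $\int_0^{\pi}\Phi^*(x,\mu)\Phi(x,\la)\,dx=(M^*(\mu)-M(\la))/(\la-\mu)$ and extracts iterated residues, using $\Res_{\la=\la_0}\Phi=\vv(x,\la_0)\al_0$ (valid under Assumption~1 since $\Phi=S+\vv M$) and \eqref{eqM} to identify the residues of $M^*$ with those of $M$; both ingredients are available at this point in the paper, so there is no circularity. Your approach is arguably more conceptual — it treats the two relations of the lemma uniformly as the off-diagonal and diagonal cases of one residue computation, and makes transparent why \eqref{eqM} is the essential input — at the cost of introducing $\Phi$, $\Phi^*$ and justifying the interchange of residue and integral (harmless, by Fubini over a small circle times $[0,\pi]$). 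The paper's computation is more elementary and self-contained but requires the auxiliary identities on $V(\vv)\al_0$. One small remark: in the coincident case the iterated residue can be taken even more directly — $\Res_{\mu=\la_0}$ of the right-hand side (for $\la\neq\la_0$) is $\al_0/(\la-\la_0)$, whose residue at $\la=\la_0$ is $\al_0$ — so the explicit splitting into $\al_0/[(\mu-\la_0)(\la-\la_0)]$ plus a remainder, while fine, is not strictly needed; also note your off-diagonal computation yields $\al_1\int_0^{\pi}\vv^*(x,\la_1)\vv(x,\la_0)\,dx\,\al_0=0_m$, which matches the stated relation only after the relabeling you mention.
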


\begin{proof}
Using \eqref{BC*} and \eqref{wron}, we derive
$$
    \int_0^{\pi} \vv^*(x, \la_0) \vv(x, \la_0) \, dx =
    \lim_{\la \to \la_0} \frac{\langle \vv^*(x, \la_0), \vv(x, \la) \rangle|_0^{\pi}}{\la - \la_0}
$$
$$
    = \lim_{\la \to \la_0} \frac{V^*(\vv^*(x, \la_0)) \vv(\pi, \la) -
    \vv^*(\pi, \la_0) V(\vv(x, \la))}{\la - \la_0}.
$$
In view of \eqref{formM}, the product $V(\vv) M(\la)$ is an entire function of $\la$. 
Taking its residues at $\la_0$,
we get
\begin{equation} \label{prodVal0}
V(\vv(x, \la_0)) \al_0 = 0_m.
\end{equation} 
Similarly $\al_0 V(\vv^*(x, \la_0)) = 0_m$.
Consequently, we calculate
$$
   \alpha_0 \int\limits_0^{\pi} \vv^*(x, \la_0) \vv(x, \la_0) \, dx \, \alpha_0 =
   \alpha_0 \vv^*(\pi, \la_0) \lim_{\la \to \la_0} \frac{V(\vv(x, \la))}{\la - \la_0}
$$
$$
    \times \lim_{\la \to \la_0} (\la - \la_0) (V(\vv(x, \la)))^{-1} V(S(x, \la))
   = \alpha_0 \vv^*(\pi, \la_0) V(S(x, \la_0))
$$
$$
   = -\alpha_0 \langle \vv^*(x, \la_0), S(x, \la_0) \rangle_{x = \pi}
   = -\alpha_0 \langle \vv^*(x, \la_0), S(x, \la_0) \rangle_{x = 0} = \alpha_0.
$$
Similarly one can derive the second relation of the lemma.
\end{proof}

\begin{lem} \label{lem:E}
Let $\{ \la_{nq}, \al_{nq}\}_{n \ge 0, q = \overline{1, m}}$ be the spectral data of the problem $L$,
satisfying Assumptions~1 and~2. Then (E) is valid.
\end{lem}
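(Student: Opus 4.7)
The plan is to form $f(\la) := \gamma(\la) M(\la)$, show it is entire, deduce $f \equiv 0$ via complex-analytic estimates combined with a Paley--Wiener/completeness argument, and then read off $\gamma \equiv 0$ from the generic invertibility of $M$.

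First I would check that $f$ is entire. Under Assumption~1 the Weyl matrix $M(\la)$ has only simple poles, located at the distinct eigenvalues $\la_{n_kq_k}$ with residues $\al'_{n_kq_k}$. The hypothesis $\gamma(\la_{nq})\al_{nq}=0$ for every $(n,q)$, together with the $\mathrm{Sp}$-convention that $\al_{nq}=\al_{kl}$ whenever $\la_{nq}=\la_{kl}$, gives $\Res_{\la=\la_{n_kq_k}} f = \gamma(\la_{n_kq_k})\al'_{n_kq_k}=0$, so every candidate pole of $f$ is cancelled.

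Next I would bound $f$ on a sequence of circles $\Gamma_N=\{|\rho|=R_N\}$ in the $\rho$-plane chosen strictly between the asymptotically regular points $\rho_{nq}$ of Lemma~\ref{lem:asymptrho}. Using $M=-(V(\vv))^{-1}V(S)$, the standard asymptotics of $\vv$ and $S$, and Assumption~2 (which gives uniform control of $(V(\vv))^{-1}$ outside neighbourhoods of the spectrum), one obtains $\|M(\la)\|\le C/|\rho|$ uniformly on $\Gamma_N$. Combined with $|\gamma(\la)|\le C e^{|\tau|\pi}$ this yields $\|f(\la)\|\le C e^{|\tau|\pi}/|\rho|$ on $\Gamma_N$; in particular on the real $\rho$-axis each component of $f(\rho^2)$ is $O(1/|\rho|)$ and hence lies in $L^2(\mathbb{R})$.

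The main obstacle is upgrading these estimates to $f\equiv 0$. A direct Liouville or Cauchy-integral argument in the $\la$-plane fails because the bound on $\Gamma_N$ grows exponentially off the real axis. Instead one passes to the $\rho$-plane: each entry of $f(\rho^2)$ is entire, even, of exponential type $\pi$, and $L^2$ on $\mathbb{R}$, so by Paley--Wiener it is the Fourier transform of an $L^2$ function on $[-\pi,\pi]$. Using the Goursat transformation operator one can rewrite this representation as $f(\la)=\int_0^\pi p(t)\,\vv(t,\la)\,dt$ for some row-vector density $p\in L_2((0,\pi),\mathbb{C}^{m,T})$. The cancellation conditions $\gamma(\la_{nq})\al_{nq}=0$ then translate, via the biorthogonality identities of Lemma~\ref{lem:sym}, into orthogonality of $p$ to the eigensystem of the adjoint problem $L^*$; the completeness of that eigensystem (which is available because the data come from a genuine Sturm--Liouville operator with discrete spectrum) forces $p\equiv 0$, and hence $f\equiv 0$.

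Finally, $\det M(\la)\not\equiv 0$: a direct computation with the leading asymptotics $V(\vv)\sim -\rho\sin(\rho\pi)I_m$ and $V(S)\sim \cos(\rho\pi)I_m$ yields $\det M(\la)\sim \cot^m(\rho\pi)/\rho^m$, so $M(\la)$ is invertible on a dense open subset of the $\la$-plane. The identity $\gamma(\la)M(\la)\equiv 0$ therefore forces $\gamma(\la)\equiv 0$ on that set, and by analytic continuation $\gamma \equiv 0$.
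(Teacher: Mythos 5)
Your opening and closing moves are fine: under Assumption~1 the residue of $\gamma(\la)M(\la)$ at a pole $\la_{nq}$ is $\gamma(\la_{nq})\al_{nq}=0$, so $f=\gamma M$ is entire, and $\det M\not\equiv 0$ reduces everything to showing $f\equiv 0$. The gap is in the middle. You correctly observe that Liouville fails for your choice of $f$, since the bound $\|M(\la)\|\le C/|\rho|$ on $G_{\de}$ carries no factor $\exp(-|\tau|\pi)$ to cancel the growth of $\gamma$; but the Paley--Wiener rescue does not close the argument. After writing $f(\la)=\int_0^\pi p(t)\vv(t,\la)\,dt$, you assert that the hypotheses $\gamma(\la_{nq})\al_{nq}=0$ ``translate into orthogonality of $p$ to the eigensystem of $L^*$.'' This is not derived, and I do not see how it could be: those hypotheses were already spent in cancelling the poles of $M$, and what remains at an eigenvalue is $f(\la_{nq})=\gamma(\la_{nq})M_0+\gamma'(\la_{nq})\al_{nq}$ (with $M_0$ the constant term of the Laurent expansion of $M$), which satisfies no vanishing or orthogonality relation you have established. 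Moreover, the completeness of the eigenfunctions of the non-self-adjoint problem $L^*$ that you invoke is not free: by Lemma~\ref{lem:complete} and Section~5, completeness of this type is essentially a consequence of the very condition (E) being proved (in its (PW)/(C) form), so invoking it here without an independent proof is circular.

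The paper's proof avoids all of this by taking $f(\la)=\gamma(\la)\,(V(\vv(x,\la)))^{-1}$ rather than $\gamma M$. Entirety of this $f$ costs more: one needs the rank identity $\mathrm{rank}\,V(\vv(x,\la_{nq}))+\mathrm{rank}\,\al_{nq}=m$ (via Lemma~\ref{lem:ranks}) together with $V(\vv(x,\la_{nq}))\al_{nq}=0_m$ to conclude that $\gamma(\la_{nq})$ lies in the row space of $V(\vv(x,\la_{nq}))$, whence the residues of $f$ vanish. The payoff is the estimate $\|(V(\vv(x,\la)))^{-1}\|\le C_{\de}|\rho|^{-1}\exp(-|\tau|\pi)$ on $G_{\de}$, whose exponential decay exactly cancels $\gamma(\la)=O(\exp(|\tau|\pi))$; the maximum principle and Liouville's theorem then give $f\equiv 0$ directly, and $\gamma=f\,V(\vv)\equiv 0$. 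Note that your function and the paper's are related by $\gamma\,(V(\vv))^{-1}=-f\,(V(S))^{-1}$, so any attempt to salvage your route ultimately leads back to dividing by $V(\vv)$.
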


\begin{proof}
Let $\gamma(\la)$ be a function described in (E). 
In view of \eqref{prodVal0}, we have 
\begin{equation} \label{prodVal}
    V(\vv(x, \la_{nq})) \al_{nq} = 0_m,  \quad n \ge 0, \: q = \overline{1, m}. 
\end{equation}
Since
$$\mbox{rank}\; V(\vv(x, \la_{nq})) + \mbox{rank}\; \alpha_{nq} = m$$
 and $\gamma(\la_{nq})\alpha_{nq} = 0$, we get
$\gamma(\la_{nq}) = C_{nq} V(\vv(x, \la_{nq}))$, i.\,e. the row
$\gamma(\la_{nq})$ is a linear combination of the rows of the
matrix $V(\vv(x, \la_{nq}))$ (here $C_{nq}$ is a row of
coefficients). Consider
$$f(\la) = \gamma(\la) (V(\vv(x, \la)))^{-1}.$$
The matrix-function $(V(\vv(x, \la)))^{-1}$ has simple poles in
$\la = \la_{nq}$, therefore, we calculate
$$
   \mathop{\mathrm{Res}}_{\la = \la_{nq}} f(\la) = \gamma(\la_{nq}) \mathop{\mathrm{Res}}_{\la = \la_{nq}}(V(\vv(x, \la)))^{-1}
$$
$$
   = C_{nq} \lim_{\la \to \la_{nq}} V(\vv(x, \la)) \lim_{\la \to \la_{nq}}(\la - \la_{nq})
  (V(\vv(x, \la)))^{-1} = 0.
$$
Hence, $f(\la)$ is entire. It is easy to show that
\[
    \| (V(\vv(x, \la)))^{-1}\| \leq C_{\de} |\rho|^{-1} \exp(-|\tau| \pi), \quad \rho \in G_{\de},
\]
where $G_{\de} = \{\rho \colon |\rho - k| \geq \de, k = 0, 1,
2, \dots \}$, $\de > 0$. From this we conclude that $\|
f(\la)\| \leq \frac{C}{|\rho|}$ in $G_{\de}$. By the maximum
principle this estimate is valid in the whole $\la$-plane. Using
Liouville`s theorem, we obtain $f(\la) \equiv 0$. Consequently,
$\gamma(\la) \equiv 0$.
\end{proof}

\medskip

{\large \bf 3. Asymptotics} \\

In this section, we prove Lemmas~\ref{lem:asymptalpha1} and \ref{lem:asymptalpha2},
providing asymptotic formulas for the weight matrices $\al_{nq}$.

\begin{lem}
For $|\rho| \to \infty$, the following asymptotic formulae hold
\begin{equation} \label{asymptVphi}
     V(\vv) = - \rho \sin \rho \pi \cdot I_m + \om \cos \rho \pi + \kappa(\rho),
\end{equation}
$$
     \kappa(\rho) = \frac{1}{2} \int\limits_{0}^{\pi}Q(t) \cos \rho(\pi - 2 t)\,dt +
     O\left(\frac{\exp(|\tau|\pi)}{\rho}\right);
$$
\begin{equation} \label{asymptVS}
   V(S) = \cos \rho \pi \cdot I_m + \frac{\sin \rho \pi}{\rho} \om_0 + \frac{\kappa_0(\rho)}{\rho},
   \quad \om_0 = H + \frac{1}{2} \int_0^{\pi} Q(t)\,dt,
\end{equation}
$$
    \kappa_0(\rho) = -\frac{1}{2} \int_0^{\pi} \sin \rho(\pi - 2 t) Q(t) \, dt +
    O\left(\frac{\exp(|\tau|\pi)}{\rho}\right).
$$
\end{lem}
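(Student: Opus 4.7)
The plan is to derive both asymptotics by the standard variation-of-parameters technique from scalar Sturm-Liouville theory, adapted to the matrix setting. The non-commutativity of $Q$ is harmless at this level because we only iterate the integral equation once, and the correction terms are treated in norm.

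First I would write down the Volterra integral equations equivalent to the initial value problems for $\vv$ and $S$:
\begin{equation*}
\vv(x,\la) = \cos\rho x \cdot I_m + \frac{\sin\rho x}{\rho} h + \int_0^x \frac{\sin\rho(x-t)}{\rho} Q(t)\vv(t,\la)\,dt,
\end{equation*}
\begin{equation*}
S(x,\la) = \frac{\sin\rho x}{\rho} I_m + \int_0^x \frac{\sin\rho(x-t)}{\rho} Q(t) S(t,\la)\,dt.
\end{equation*}
From these one obtains the crude bounds $\|\vv(x,\la)\| = O(\exp(|\tau|x))$ and $\|S(x,\la)\|=O(|\rho|^{-1}\exp(|\tau|x))$ by a Gronwall-type argument on the associated scalar comparison inequality. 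Substituting these back into the right-hand sides replaces $\vv(t,\la)$ by $\cos\rho t\cdot I_m$ and $S(t,\la)$ by $\frac{\sin\rho t}{\rho} I_m$ up to errors of order $O(\rho^{-2}\exp(|\tau|x))$ and $O(\rho^{-3}\exp(|\tau|x))$ respectively.

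Next I differentiate the integral representations term by term, evaluate at $x=\pi$, and combine with $H\cdot\vv(\pi,\la)$, $H\cdot S(\pi,\la)$ to form $V(\vv)$ and $V(S)$. After this step, the key manipulations are the product-to-sum trigonometric identities
\begin{equation*}
\cos\rho(\pi-t)\cos\rho t = \tfrac{1}{2}\bigl[\cos\rho\pi + \cos\rho(\pi-2t)\bigr], \quad
\cos\rho(\pi-t)\sin\rho t = \tfrac{1}{2}\bigl[\sin\rho\pi - \sin\rho(\pi-2t)\bigr].
\end{equation*}
Applied inside the integrals, these produce a coefficient of $\cos\rho\pi$ (respectively $\frac{\sin\rho\pi}{\rho}$) equal to $\frac{1}{2}\int_0^\pi Q(t)\,dt$. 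Combining with the already-present terms $h\cos\rho\pi$ in $V(\vv)$, and $H\cos\rho\pi$, $H\frac{\sin\rho\pi}{\rho}$ produced by the boundary term, the constant multiplying $\cos\rho\pi$ in $V(\vv)$ becomes exactly $h+H+\frac{1}{2}\int_0^\pi Q(t)\,dt=\om$ by the assumption $L\in A(\om)$, and the coefficient of $\frac{\sin\rho\pi}{\rho}$ in $V(S)$ becomes $\om_0=H+\frac{1}{2}\int_0^\pi Q(t)\,dt$.

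The remaining integral terms $\tfrac{1}{2}\int_0^\pi Q(t)\cos\rho(\pi-2t)\,dt$ and $-\tfrac{1}{2\rho}\int_0^\pi \sin\rho(\pi-2t)Q(t)\,dt$ constitute the principal parts of $\kappa(\rho)$ and $\kappa_0(\rho)/\rho$, and the second iterate of the Volterra equation contributes only the stated $O(\rho^{-1}\exp(|\tau|\pi))$ error. I expect no conceptual obstacle; the main nuisance is bookkeeping of the error bounds for the matrix integrands (which must be done in the chosen norm \eqref{matrixnorm}) and being careful that all integrals $\int_0^\pi Q(t)\,dt$ appear on the correct side of the non-commuting factors, which they do because at the relevant order $Q$ is multiplied on both sides only by scalars $\cos\rho t$, $\sin\rho t$, $\cos\rho\pi$.
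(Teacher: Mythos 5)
Your proposal is correct and is essentially the paper's argument: the paper simply quotes the standard asymptotic expansions \eqref{asymptphi}, \eqref{asymptdphi} (and the analogous ones for $S$, $S'$) and substitutes them into $V(\vv)=\vv'(\pi,\la)+H\vv(\pi,\la)$ and $V(S)=S'(\pi,\la)+HS(\pi,\la)$, whereas you additionally derive those expansions from the Volterra equations via one iteration and the product-to-sum identities — which is exactly how they are obtained in the standard references. Your bookkeeping of the error orders and of the (harmless) matrix ordering is consistent with the stated remainders.
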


\begin{proof}
The assertion of the lemma immediately follows from the standard asymptotics:
\begin{equation} \label{asymptphi}
\vv(x, \la) = \cos \rho x  \cdot I_m + \frac{\sin \rho x}{\rho} Q_1(x) + \frac{1}{2 \rho} \int_0^x \sin \rho(x - 2 t)Q(t) \, dt
+ O \left( \frac{\exp(|\tau|x)}{\rho^2} \right),
\end{equation}
\begin{equation} \label{asymptdphi}
  \vv'(x, \la) = -\rho \sin \rho x \cdot I_m + \cos \rho x Q_1(x) + \frac{1}{2} \int_0^x \cos \rho(x - 2 t)Q(t)\, dt +
+ O \left( \frac{\exp(|\tau|x)}{\rho} \right),
\end{equation}
where $Q_1(x) = h + \frac{1}{2}\int_0^x Q(t)\, dt$, and
$$
    S(x, \la) = \frac{\sin \rho x}{\rho} \cdot I_m - \frac{\cos \rho x}{2 \rho^2}\int_0^x Q(t) \, dt + 
    \frac{1}{2 \rho^2} \int_0^x \cos \rho(x - 2t) Q(t)\,dt + O\left(\frac{\exp(|\tau|x)}{\rho^3}\right),
$$
$$
    S'(x, \la) = \cos \rho x \cdot I_m + \frac{\sin \rho x}{2 \rho}\int_0^x Q(t)\,dt - 
    \frac{1}{2\rho} \int_0^x \sin \rho(x - 2 t) Q(t) \, dt + O\left( \frac{\exp(|\tau|x)}{\rho^2} \right).
$$
\end{proof}

\begin{proof}[Proof of Lemma~\ref{lem:asymptalpha1}]
Consider the contour 
$$
    \ga_n^{(s)} := \left\{ \la \colon \la = n^2 + \frac{2}{\pi} \mu, \: |\mu - \omega_{m_s}| = R := \frac{1}{2}\min_{j,k}|\om_j - \om_k| \right\}.
$$
Then for sufficiently large $n$, by virtue of \eqref{asymptrho} and the residue theorem,
\begin{equation} \label{smeq5}
    \al_n^{(s)} = \frac{1}{2 \pi i}\int\limits_{\ga_n^{(s)}} M(\la) \, d \la.
\end{equation}

Further in this proof, we fix $s = \overline{1, p}$ and a sufficiently large $n$, 
and consider only $\la = n^2 + \dfrac{2}{\pi} \mu \in \ga_n^{(s)}$. 
Taking a square root, we get 
\begin{equation} \label{rhoform}
    \rho = \sqrt \la =  n + \frac{\mu}{\pi n} + \frac{\kappa_n(\mu)}{n}.
\end{equation}
Here and below $\{ \kappa_n(\mu) \}$ denotes different sequences, depending on $\mu$, but majorized by 
a constant sequence from $l_2$, independent of $\mu$: 
$$
    \forall \mu \colon \, |\mu - \om_{m_s}| = R, \quad |\kappa_n(\mu)| \le \kappa_n, \quad \sum_{n} \kappa_n^2 < \infty.
$$
Similarly, $\{ K_n(\mu) \}$ denotes sequences of matrices, whose norms form scalar sequences $\{ \kappa_n(\mu) \}$.

It follows from \eqref{asymptVphi} and \eqref{asymptVS}, that for $\la \in \ga_n^{(s)}$,
$$
    V(\vv) = (-1)^n (- \mu I_m + \om + K_n(\mu)), \quad V(S) = (-1)^n \left(I_m + \frac{K_n(\mu)}{n} \right).   
$$
Substitute this into \eqref{formM}:
$$
    M(\la) = (\mu I_m - \om + K_n(\mu))^{-1} \left(I_m + \frac{K_n(\mu)}{n} \right).
$$
Since $|\mu - \om_q| \ge R$ for all $q = \overline{1, m}$, the inverse $M_0(\mu) := (\mu I_m - \om)^{-1}$ is bounded, and
$$
    M(\la) - M_0(\mu) = K_n(\mu), \quad \frac{1}{2 \pi i} \int\limits_{\ga_n^{(s)}} (M(\la) - M_0(\mu)) \, d \la = K_n.
$$
We calculate
$$
    \frac{1}{2 \pi i} \int\limits_{\ga_n^{(s)}} M_0(\mu) \, d\la = 
    \frac{2}{\pi} \cdot \frac{1}{2 \pi i} \int\limits_{|\mu - \om_{m_s}| = R} (\mu I_m - \om)^{-1} \, d \mu
    = \frac{2}{\pi} I^{(s)}.    
$$
Together with \eqref{smeq5}, this gives \eqref{asymptalpha1}.

By \eqref{asymptrho} and \eqref{asymptVphi}, $V(\vv(x, \la_{nq})) = (-1)^n (\om - \om_q I_m + K_n)$. 
By Assumption~2, $\| \al_{nq} \| \le C$. Using these facts together with \eqref{prodVal}, we obtain
$(\om - \om_q I_m) \al_{nq} = K_n$. This relation yields \eqref{asymptalpha2}.

\end{proof}

\begin{lem} \label{lem:int}
Let a matrix $A$ be such that $\| A \| < R$. Then
$$
    \frac{1}{2\pi i} \int\limits_{|\mu| = R} (\mu I_m - A)^{-1} d \mu = I_m.
$$
\end{lem}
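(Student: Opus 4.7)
The plan is to expand $(\mu I_m - A)^{-1}$ as a Neumann series and integrate term by term. Since $\|A\| < R$ and $|\mu| = R$ on the contour, we have $\|\mu^{-1} A\| = \|A\|/R < 1$, so the geometric series
$$
(\mu I_m - A)^{-1} = \mu^{-1}(I_m - \mu^{-1} A)^{-1} = \sum_{k=0}^{\infty} \frac{A^k}{\mu^{k+1}}
$$
converges in the matrix norm. Moreover, the convergence is uniform for $\mu$ on the circle $|\mu| = R$, because the tail is bounded by the convergent majorant $\sum_{k \ge N} \|A\|^k R^{-k-1}$, which depends only on $\|A\|/R$. Uniform convergence justifies integrating the series term by term.

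The next step is the standard scalar Cauchy computation
$$
\frac{1}{2\pi i}\int_{|\mu| = R} \frac{d\mu}{\mu^{k+1}} =
\begin{cases} 1, & k = 0, \\ 0, & k \ge 1, \end{cases}
$$
so all terms with $k \ge 1$ drop out and only $A^0 = I_m$ survives. This yields
$$
\frac{1}{2\pi i}\int_{|\mu| = R} (\mu I_m - A)^{-1}\, d\mu = \sum_{k=0}^{\infty} A^k \cdot \frac{1}{2\pi i}\int_{|\mu|=R}\frac{d\mu}{\mu^{k+1}} = I_m,
$$
as claimed.

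There is no real obstacle here; the only point requiring a little care is the justification of exchanging the sum with the contour integral, which is handled by the uniform convergence of the Neumann series on the compact circle $|\mu| = R$ (a consequence of the strict inequality $\|A\| < R$). Alternatively, one could argue via the matrix functional calculus: $(\mu I - A)^{-1}$ is holomorphic in $\mu$ outside the spectrum of $A$, which lies in the disk $|\mu| \le \|A\| < R$, so the integral computes the matrix $f(A)$ with $f \equiv 1$, namely $I_m$.
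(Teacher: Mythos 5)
Your proof is correct and follows essentially the same route as the paper: both expand $(\mu I_m - A)^{-1}$ as the series $\sum_{k\ge 0} A^k/\mu^{k+1}$, use the uniform convergence on $|\mu|=R$ guaranteed by $\|A\|<R$, and observe that only the $k=0$ term contributes. The paper merely phrases the final step as computing $-\Res_{\mu=\infty}F(\mu)$ from the Laurent coefficient of $1/\mu$, while you integrate termwise directly; these are the same computation.
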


\begin{proof}
The matrix-function $F(\mu) = (\mu I_m - A)^{-1}$ is analytic outside the circle $|\mu| < R$.
Therefore,
$$
    \frac{1}{2 \pi i} \int_{|\mu| = R} F(\mu)\, d \mu = - \Res_{\mu = \infty} F(\mu).
$$
The Laurent series 
$$
    F(\mu) = \frac{1}{\mu} \left(I_m + \frac{A}{\mu} + \frac{A^2}{\mu^2} + \dots \right)
$$
converge uniformly when $|\mu| \ge R > \| A \|$. Therefore
$$
    \Res_{\mu = \infty} F(\mu) = -I_m,
$$
that yields the assertion of the lemma.
\end{proof}

\begin{proof}[Proof of Lemma~\ref{lem:asymptalpha2}]
Note that in fact, the asymptotics \eqref{asymptalpha3} with the remainder $K_n$ are already proved.
In order to improve this estimate, we will work with the remainder $\kappa(\rho)$ in \eqref{asymptVphi}.

Substituting the representation
\begin{multline*}
\vv'(x, \la) = -\rho \sin \rho x \cdot I_m + \cos \rho x Q_1(x) +  
\frac{1}{2} \int_0^x \cos \rho (x - 2t) Q(t) \, dt \\ +
\frac{\sin \rho x}{2\rho} \int_0^x Q(t)Q_1(t)\, dt - \frac{1}{2\rho}\int_0^x \sin \rho (x - 2t)Q(t)Q_1(t)\, dt \\ +
\frac{1}{2\rho} \int_0^x \cos \rho (x - t) Q(t) \int_0^t \sin \rho(t - 2s) Q(s) \, ds \, dt + O\left( \frac{\exp(|\tau|x)}{\rho^2} \right),
\end{multline*}
$$
   Q_1(x) := h + \frac{1}{2}\int_0^x Q(t)\, dt,
$$
and \eqref{asymptphi} into $V(\vv) = \vv'(\pi, \la) + H \vv(\pi, \la)$, we arrive at \eqref{asymptVphi} with
\begin{multline} \label{reprkappa}
\kappa(\rho) = \frac{1}{2} \int_0^{\pi} \cos \rho (\pi - 2 t) Q(t) \, dt + \frac{\sin \rho \pi}{\rho}
\left( \frac{1}{2} \int_0^{\pi} Q(t) Q_1(t) \, dt  + H Q_1(\pi) \right) \\ + \frac{1}{2 \rho} \int_0^{\pi} \sin \rho(\pi - 2 t)
\left(H Q(t) - Q(t) Q_1(t) \right)\, dt \\  + \frac{1}{2 \rho} \int_0^{\pi} \cos \rho (\pi - t) Q(t) 
\int_0^t \sin \rho (t - 2 s) Q(s) \, ds \, dt +  O\left( \frac{\exp(|\tau|\pi)}{\rho^2}\right).
\end{multline}

Consider the contour
$$
   \ga_n := \left\{ \la \colon \la = n^2 + \frac{2}{\pi} \mu, \: |\mu| = 3 \| \om \| \right\}.
$$
Further in this proof, we fix a sufficiently large $n$, 
such that
\begin{equation} 
    \al_n = \frac{1}{2 \pi i}\int\limits_{\ga_n} M(\la) \, d \la.
\end{equation}
and consider only $\la = n^2 + \dfrac{2}{\pi} \mu \in \ga_n$. 
Then the square root of $\la$ takes the form \eqref{rhoform}.

Substitute \eqref{rhoform} into \eqref{reprkappa}. Then the first integral in \eqref{reprkappa} equals
$$
    \frac{1}{2} \int_0^{\pi} \cos n (\pi - 2 t) Q(t) \, dt + \frac{K_n(\mu)}{n},
$$
and all the other terms are $\dfrac{K_n(\mu)}{n}$.
Then by \eqref{formM}, \eqref{asymptVphi}, \eqref{asymptVS}, we get
$$
    M(\la) = \left(\mu I_m - \om + L_n + \frac{K_n(\mu)}{n} \right)^{-1} \left( I_m + \frac{K_n(\mu)}{n}\right), \quad \la \in \ga_n,
$$
where $L_n$ is a matrix sequence independent of $\mu$, and $\{ \| L_n \|\} \in l_2$. 
Thus, for large $n$, $\| L_n \| + \| K_n(\mu) / n \| \le \| \om \|$ and the inverses
$\left(\mu I_m - \om + L_n + \frac{K_n(\mu)}{n} \right)^{-1}$ and 
$\left(\mu I_m - \om + L_n\right)^{-1}$ are bounded for $|\mu| = 3 \| \om \|$. Therefore
$$
    \frac{1}{2\pi i}\int\limits_{|\mu| = 3 \| \om \|} M(\la) \, d\mu = \frac{1}{2\pi i}
    \int\limits_{|\mu| = 3 \| \om\|} \left(\mu I_m - \om + L_n\right)^{-1} \, d\mu + \frac{K_n}{n}. 
$$
Applying Lemma~\ref{lem:int} to the right-hand side and changing $d \mu$ to $d \la$, we arrive at \eqref{asymptalpha3}.
\end{proof}

\medskip

{\large \bf 4. Solution of Inverse Problem 1} \\

Let the spectral data $\Lambda$ of the boundary value problem $L
\in A_{1, 2}(\om)$, $\om \in \mathcal{D}$, be given.

Denote
\begin{equation} \label{defD}
     D(x, \la, \mu) = \frac{ \langle \vv^*(x, \mu),
     \vv(x, \la)\rangle}{\la - \mu} = \int\limits_0^x
     \vv^*(t, \mu) \vv(x, \la) \, dt.                                       
\end{equation}

We choose an arbitrary model boundary value problem $\tilde L =
L(\tilde Q(x), \tilde h, \tilde H) \in A_{1,2}(\om)$ (for example, one
can take $\tilde Q(x) = \frac{2}{\pi} \om$, $\tilde h = 0_m$,
$\tilde H = 0_m$). Note that for this choice of the model problem $\om = \tilde \om$, therefore 
the eigenvalues $\tilde \la_{nq}$ and the weight matrices $\tilde \al_{nq}$ 
of $\tilde L$ satisfy the same asymptotic formulae
\eqref{asymptrho}, \eqref{asymptalpha1}, \eqref{asymptalpha2} and \eqref{asymptalpha3}, as $\la_{nq}$
and $\al_{nq}$. 
Put
\begin{equation} \label{defxi}
   \xi_n = \sum_{q = 1}^m |\rho_{nq} - \tilde \rho_{nq}| + \sum_{s = 1}^p \sum_{q \in J_s} |\rho_{nq} - \rho_{n m_s}| +
    \sum_{s = 1}^p \sum_{q \in J_s} |\tilde \rho_{nq} - \tilde \rho_{n m_s}| + 
    \sum_{s = 1}^p \frac{1}{n}\| \alpha_n^{(s)} - \tilde \alpha_n^{(s)}\| + \| \al_n - \tilde \al_n \|,
\end{equation}
then
\begin{equation} \label{defOmega}
    \Omega := \left( \sum_{n = 0}^{\iy} ((n + 1) \xi_n)^2\right)^{1/2} < \iy, \quad \sum_{n = 0}^{\iy} \xi_n < \iy.
\end{equation}

Denote
$$
    \begin{array}{c}
    \la_{nq0} = \la_{nq}, \quad \la_{nq1} = \tilde \la_{nq}, \quad \rho_{nq0} = \rho_{nq}, \quad \rho_{nq1} = \tilde \rho_{nq}, \quad
    \alpha'_{nq0} = \alpha'_{nq}, \quad \alpha'_{nq1} = \tilde \alpha'_{nq}, \\ 
    \vv_{nqi}(x) = \vv(x, \la_{nqi}), \quad
    \tilde \vv_{nqi}(x) = \tilde \vv(x, \la_{nqi}), \quad
    \vv^*_{nqi}(x) = \vv^*(x, \la_{nqi}), \quad
    \tilde \vv^*_{nqi}(x) = \tilde \vv^*(x, \la_{nqi}),     
    \\
    n \geq 0, \quad q = \overline{1, m}, \quad i = 0, 1.
    \end{array}
$$

By the standard way (see \cite[Lemma~1.6.2]{FY01}), using Schwarz's lemma, we get

\begin{lem} \label{lem:Schwarz}
The following estimates are valid for $x \in [0, \pi]$, $n, k \geq 0$, $q, l, r = \overline{1, m}$, $i, j, s = 0, 1$:
\begin{gather*}
   \|\vv_{nqi}(x)\| \leq C, \quad
   \|\vv_{n q i}(x) - \vv_{n l j}(x) \| \leq C |\rho_{nqi} - \rho_{nlj}|, \\
   \| D(x, \la_{nqi}, \la_{klj}) \| \le \frac{C}{|n - k| + 1}, \quad
   \| D(x, \la_{nqi}, \la_{klj}) - D(x, \la_{nqi}, \la_{krs}) \| \le \frac{C |\rho_{klj} - \rho_{krs}|}{|n - k| + 1}.
\end{gather*}
The analogous estimates are also valid for $\tilde \vv_{nqi}(x)$,
$\tilde D(x, \la_{nqi}, \la_{klj})$, as well as for similar matrix functions,
related to the problems $L^*$, $\tilde L^*$.
\end{lem}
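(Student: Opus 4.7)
The plan is to handle the four estimates in sequence; the two $\vv$-bounds come from the asymptotic formula~\eqref{asymptphi} combined with a Cauchy/Schwarz disk argument, while the two $D$-bounds come from the integral representation obtained via~\eqref{wron} together with a parallel disk argument for the Lipschitz refinement.

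For $\|\vv_{nqi}(x)\|\le C$: by~\eqref{asymptrho} we have $|\mathrm{Im}\,\rho_{nqi}|\le C/n$, hence $\exp(|\mathrm{Im}\,\rho_{nqi}|x)\le C$ uniformly on $[0,\pi]$, and then every summand of~\eqref{asymptphi} evaluated at $\la=\la_{nqi}$ is bounded uniformly in $n,q,i$. For the Lipschitz estimate $\|\vv_{nqi}(x)-\vv_{nlj}(x)\|\le C|\rho_{nqi}-\rho_{nlj}|$: fix $x$ and regard $g(z):=\vv(x,z^2)$ as an entire matrix-function of $z$. Asymptotic~\eqref{asymptphi} yields $\|g(z)\|\le C$ uniformly on the disk $|z-n|\le 2$, which (for $n$ large) contains both $\rho_{nqi}$ and $\rho_{nlj}$. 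Cauchy's inequality bounds $\|g'(z)\|$ on the concentric disk $|z-n|\le 1$, and integrating $g'$ along the segment from $\rho_{nlj}$ to $\rho_{nqi}$ gives the desired estimate. The finitely many small~$n$ are absorbed into the constant $C$.

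For $\|D(x,\la_{nqi},\la_{klj})\|\le C/(|n-k|+1)$: the Wronskian identity~\eqref{wron} combined with $\langle\vv^*(0,\mu),\vv(0,\la)\rangle=h-h=0$ (by the common initial conditions $\vv(0,\la)=\vv^*(0,\mu)=I_m$, $\vv'(0,\la)={\vv^*}'(0,\mu)=h$) gives
\begin{equation*}
D(x,\la,\mu)=\int_0^x \vv^*(t,\mu)\,\vv(t,\la)\,dt.
\end{equation*}
Substituting the leading terms $\cos(\rho_{nqi}t)\,I_m$ and $\cos(\rho_{klj}t)\,I_m$ of~\eqref{asymptphi} (and the analogous expansion of $\vv^*$) and applying $2\cos A\cos B=\cos(A+B)+\cos(A-B)$, the dominant contribution is bounded by $C(|\rho_{nqi}+\rho_{klj}|^{-1}+|\rho_{nqi}-\rho_{klj}|^{-1})$ when $\rho_{nqi}\ne\rho_{klj}$ and by $C\pi$ otherwise; since $\rho_{nqi}\approx n$ and $\rho_{klj}\approx k$, this is $O(1/(|n-k|+1))$ in all cases, and the sub-leading $O(1/n)$ remainders contribute harmlessly. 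For the Lipschitz refinement, hold $n,q,i,k$ fixed and view $F(z):=D(x,\la_{nqi},z^2)$ as entire in $z$. Rerunning the same integral computation with an arbitrary $z$ in the disk $|z-k|\le 2$ in place of $\rho_{klj}$ produces $\|F(z)\|\le C/(|n-k|+1)$ uniformly on that disk; Cauchy's inequality then bounds $\|F'(z)\|$ on $|z-k|\le 1$, and integrating $F'$ along the segment from $\rho_{krs}$ to $\rho_{klj}$ (both in this disk for large $k$) yields the claim.

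The main obstacle is the final step: upgrading the pointwise bound on $D$ at genuine spectral values to a uniform bound on the whole disk $|z-k|\le 2$. One must verify that the product-to-sum computation retains the $1/(|n-k|+1)$ factor with $\rho_{klj}$ replaced by an arbitrary complex $z$ near $k$; this requires checking that $\exp(|\mathrm{Im}\,z|x)$ stays bounded on the disk and that the denominators $|\rho_{nqi}\pm z|$ remain comparable to $n+k$ and $|n-k|$ respectively. This is precisely the matrix-valued analogue of the standard scalar argument from~\cite[Lemma~1.6.2]{FY01} that the proof invokes, and the corresponding estimates for $\tilde\vv_{nqi}$, $\tilde D$, $\vv^*_{nqi}$, $\tilde\vv^*_{nqi}$ follow by the identical argument applied to the respective Volterra integral equations.
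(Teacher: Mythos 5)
Your overall strategy --- uniform boundedness from the asymptotics \eqref{asymptphi} plus a Cauchy/Schwarz disk argument for the Lipschitz refinements --- is exactly the ``standard way'' via \cite[Lemma~1.6.2]{FY01} that the paper invokes without further detail, and your treatment of the two $\vv$-estimates is correct. There is, however, one concrete gap in your proof of $\| D(x, \la_{nqi}, \la_{klj}) \| \le C/(|n - k| + 1)$: the assertion that ``the sub-leading $O(1/n)$ remainders contribute harmlessly.'' Write $\vv(t,\la_{nqi})=\cos(\rho_{nqi}t)I_m+r_n(t)$ with $\|r_n\|_\infty\le C/(n+1)$ and $\vv^*(t,\la_{klj})=\cos(\rho_{klj}t)I_m+s_k(t)$ with $\|s_k\|_\infty\le C/(k+1)$. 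The cross term $\int_0^x\cos(\rho_{klj}t)\,r_n(t)\,dt$ is only $O(1/(n+1))$ by the sup-norm bound, and $1/(n+1)$ is \emph{not} $O(1/(|n-k|+1))$ when $k\gg n$ (take $n=1$, $k=100$: you need $C/100$ but only have $C/2$). So the step as written fails: the crude size bound on the remainders does not produce the claimed decay in $|n-k|$, and the same defect propagates into your uniform disk bound $\|F(z)\|\le C/(|n-k|+1)$ used for the Lipschitz refinement.

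The missing ingredient is that the \emph{oscillation} of the cosine against the remainder must be exploited, not merely its size. Two standard repairs: (i) expand $r_n(t)$ via \eqref{asymptphi} (its leading part is $\frac{\sin \rho_{nqi} t}{\rho_{nqi}}Q_1(t)+\dots$ with $Q_1$ absolutely continuous), apply product-to-sum once more and integrate by parts to gain the extra factor $(|n-k|+1)^{-1}$; or, cleaner, (ii) for $n\ne k$ estimate $D$ through its Wronskian form $D(x,\la,\mu)=\langle\vv^*(x,\mu),\vv(x,\la)\rangle/(\la-\mu)$ from \eqref{defD}: with $\la=\rho^2$, $\mu=\theta^2$ the numerator equals $\frac{\rho+\theta}{2}\sin(\rho-\theta)x\, I_m+\frac{\rho-\theta}{2}\sin(\rho+\theta)x\, I_m+O\bigl(1+\rho/\theta+\theta/\rho\bigr)$, and dividing by $\la-\mu=(\rho-\theta)(\rho+\theta)\asymp(|n-k|+1)(n+k+1)$ absorbs every remainder, while for pairs with $|n-k|$ bounded the trivial bound $\|D\|\le\int_0^x\|\vv^*\|\,\|\vv\|\,dt\le C$ from the integral representation already suffices. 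With either patch, the rest of your argument (including the disk/Cauchy step you correctly single out as the delicate point) goes through.
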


The lemma similar to the following one has been proved in \cite{Yur06} by the contour integral method.

\begin{lem} \label{lem:contour}
The following relations hold
\begin{equation} \label{mainphila}
    \tilde \vv(x, \la) = \vv(x, \la) + \sum_{k = 0}^{\iy}
    \sum_{l = 1}^m \left(\vv_{kl0}(x) \al'_{kl0} \tilde D(x, \la, \la_{kl0}) -
    \vv_{kl1}(x) \al'_{kl1} \tilde D(x, \la, \la_{kl1})\right)                                
\end{equation}
$$
    \tilde D(x, \la, \mu) - D(x, \la,\mu) = \sum_{k = 0}^{\iy}
    \sum_{l = 1}^m \left(D(x, \la_{kl0}, \mu) \al'_{kl0} \tilde D(x, \la, \la_{kl0}) -
    D(x, \la_{kl1}, \mu) \al'_{kl1} \tilde D(x, \la, \la_{kl1})\right).
$$
Both series converge absolutely and uniformly with respect to $x \in [0, \pi]$ and
$\la$, $\mu$ on compact sets.
\end{lem}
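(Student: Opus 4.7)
The plan is to prove both identities by the contour integral method, fixing $x\in[0,\pi]$ and letting $\la,\mu$ range over a compact subset of $\mathbb{C}$ that is disjoint from $\{\la_{kli}\}_{k\ge 0,\,l=\overline{1,m},\,i=0,1}$. Introduce the contours $\Gamma_N=\{\nu\in\mathbb{C}:|\nu|=(N+1/2)^2\}$; by \eqref{asymptrho} (which applies to both $L$ and $\tilde L$ since $\om=\tilde\om$), for all sufficiently large $N$ the contour $\Gamma_N$ encloses exactly the eigenvalues $\la_{kli}$ with $k\le N$ and stays uniformly away from every $\la_{kli}$. All integrals below are understood on such $\Gamma_N$, and the final formulae arise by passing to the limit $N\to\infty$.

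For the first identity, I would apply the residue theorem to the meromorphic matrix function
\[
\Psi_{x,\la}(\nu):=\vv(x,\nu)\bigl[M(\nu)-\tilde M(\nu)\bigr]\tilde D(x,\la,\nu).
\]
By Assumption~1 and its analogue for $\tilde L$, all poles of $\Psi_{x,\la}$ are simple and located at the $\la_{kli}$. Since $\operatorname{Res}_{\nu=\la_{kl0}}M(\nu)=\al'_{kl0}$ and $\operatorname{Res}_{\nu=\la_{kl1}}\tilde M(\nu)=\al'_{kl1}$ (the primed notation preventing double-counting in case of coincidences), a direct residue computation gives
\[
\frac{1}{2\pi i}\oint_{\Gamma_N}\Psi_{x,\la}(\nu)\,d\nu=\sum_{k\le N}\sum_{l=1}^{m}\Bigl[\vv_{kl0}(x)\al'_{kl0}\tilde D(x,\la,\la_{kl0})-\vv_{kl1}(x)\al'_{kl1}\tilde D(x,\la,\la_{kl1})\Bigr].
\]
The content of the lemma is then to show independently that the left-hand side tends to $\tilde\vv(x,\la)-\vv(x,\la)$ as $N\to\infty$. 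To do this I would use $\vv M=\Phi-S$ and its tilded analog to rewrite
\[
\vv(x,\nu)[M(\nu)-\tilde M(\nu)]=[\Phi(x,\nu)-\tilde\Phi(x,\nu)]+[\tilde S(x,\nu)-S(x,\nu)]+[\vv(x,\nu)-\tilde\vv(x,\nu)]\tilde M(\nu),
\]
multiply by $\tilde D(x,\la,\nu)$, and analyze each piece on $\Gamma_N$: the $\Phi,\tilde\Phi$ pieces decay exponentially in $|\tau|$, the $S,\tilde S$ pieces are entire and treated by Jordan-type arc estimates using \eqref{asymptVphi}--\eqref{asymptVS}, and the remaining term $[\vv-\tilde\vv]\tilde M\cdot\tilde D$ produces the desired $\tilde\vv(x,\la)-\vv(x,\la)$ upon using the uniform bound $\|M(\nu)-\tilde M(\nu)\|=O(|\nu|^{-1/2})$ on $\Gamma_N\cap G_\de$, which follows from $\om=\tilde\om$.

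For the second identity, the argument is completely parallel: one applies the residue theorem to
\[
\Xi_{x,\la,\mu}(\nu):=D(x,\nu,\mu)\bigl[M(\nu)-\tilde M(\nu)\bigr]\tilde D(x,\la,\nu),
\]
whose residues at $\la_{kli}$ produce precisely $D(x,\la_{kli},\mu)\al'_{kli}\tilde D(x,\la,\la_{kli})$ (with appropriate signs), and the contour integral on $\Gamma_N$ is evaluated to $\tilde D(x,\la,\mu)-D(x,\la,\mu)$ by the same decomposition applied to $D$. The main obstacle is the limiting step: justifying uniform convergence on $\Gamma_N$ and identifying the non-trivial main term as $\tilde\vv-\vv$ (respectively $\tilde D-D$). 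This is a delicate estimate combining the asymptotics \eqref{asymptrho}, \eqref{asymptVphi}, \eqref{asymptVS}, the bounds of Lemma~\ref{lem:Schwarz}, and Assumption~2. Finally, absolute and uniform convergence of both series in $x,\la,\mu$ on compact sets follows from $\|\al'_{kli}\|\le C$ (Assumption~2), the estimates of Lemma~\ref{lem:Schwarz}, and the Cauchy--Schwarz inequality applied with the $l_2$ control $\Omega<\infty$ from \eqref{defxi}--\eqref{defOmega}.
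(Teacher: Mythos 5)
The paper does not actually prove this lemma: it states only that ``the lemma similar to the following one has been proved in \cite{Yur06} by the contour integral method,'' so there is no in-text argument to compare against. Your sketch is indeed that method, and the residue bookkeeping is right: by Assumption~1 the poles of $\Psi_{x,\la}(\nu)=\vv(x,\nu)\hat M(\nu)\tilde D(x,\la,\nu)$, $\hat M:=M-\tilde M$, are simple, and the residues at $\la_{kl0}$, $\la_{kl1}$ produce exactly the terms of \eqref{mainphila} with the primed matrices. However, the two genuinely hard steps are not carried out, and as written neither would go through.

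First, the limiting step. Yurko's argument does not estimate $\oint_{\Gamma_N}\Psi_{x,\la}$ directly; it introduces the matrix of spectral mappings $P(x,\la)$ defined by $[\vv,\Phi]=P\,[\tilde\vv,\tilde\Phi]$ (together with derivatives), shows $P_{11}-I_m=O(\rho^{-1})$, $P_{12}=O(\rho^{-1})$ in $G_{\de}$, represents $P_{11},P_{12}$ by Cauchy's formula over a contour surrounding only the spectrum, and substitutes into $\vv=P_{11}\tilde\vv+P_{12}\tilde\vv'$; the $S$, $\tilde S^{*}$ contributions vanish because they are analytic inside that contour, leaving the integral of $\vv\hat M\tilde D$. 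Your substitute decomposition has a sign error ($\vv\hat M=(\Phi-\tilde\Phi)+(\tilde S-S)+(\tilde\vv-\vv)\tilde M$, not $+(\vv-\tilde\vv)\tilde M$), and, more importantly, it is not explained how the pieces $(\Phi-\tilde\Phi)\tilde D$ and $(\tilde\vv-\vv)\tilde M\tilde D$ --- which still carry all the poles inside $\Gamma_N$ --- conspire to yield the value $\tilde\vv(x,\la)-\vv(x,\la)$; note that $\Psi_{x,\la}(\nu)$ has no pole at $\nu=\la$, so the main term cannot arise from a Cauchy kernel. Also, the bound $\|\hat M\|=O(|\nu|^{-1/2})=O(|\rho|^{-1})$ you invoke is the bound for $M$ itself; what $\om=\tilde\om$ actually gives (via \eqref{formM}, \eqref{asymptVphi}, \eqref{asymptVS}) is $\hat M=O(|\rho|^{-2})$ in $G_{\de}$, and it is this extra power that makes the contour integrals converge. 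Second, the convergence claim: each individual term $\vv_{klj}(x)\al'_{klj}\tilde D(x,\la,\la_{klj})$ is only $O((k+1)^{-1})$ by Lemma~\ref{lem:Schwarz} and Assumption~2, so the double series is \emph{not} absolutely convergent term by term and Cauchy--Schwarz applied to the raw terms with $\Omega<\iy$ proves nothing --- the paper says explicitly, right after the lemma, that the series in \eqref{mainphi} converges only ``with brackets.'' The convergence asserted in the lemma (of the bracketed series over $k$) requires the telescoping regrouping used in the proof of Lemma~\ref{lem:Rbound} and in \eqref{smeqeps}: pair $j=0$ with $j=1$, then group indices within each $J_s$, then sum over $s$, so that each bracket is of size $C\xi_k$ and \eqref{defOmega} applies.
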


Analogously one can obtain the following relation 
\begin{equation} \label{mainPhi}
    \tilde \Phi(x, \la) = \Phi(x, \la) +
    \sum_{k = 0}^{\iy} \sum_{l = 1}^m \sum_{j = 0}^1
    (-1)^j \vv_{klj}(x) \al'_{klj} \frac{\langle
    \tilde \vv^*_{klj}(x), \tilde \Phi(x, \la)
         \rangle}{\la - \la_{klj}}.                                                           
\end{equation}

It follows from Lemma~\ref{lem:contour} that
\begin{equation} \label{mainphi}
    \tilde \vv_{nqi}(x) = \vv_{nqi}(x) + \sum_{k = 0}^{\iy}
    \sum_{l = 1}^m (\vv_{kl0}(x) \al'_{kl0} \tilde D(x, \la_{nqi}, \la_{kl0}) -
    \vv_{kl1}(x) \al'_{kl1} \tilde D(x, \la_{nqi}, \la_{kl1})),                                                    
\end{equation}
\begin{multline} \label{mainsol}
    \al'_{\eta r j} \tilde D(x, \la_{nqi}, \la_{\eta r j}) -  \al'_{\eta r j} D(x, \la_{nqi}, \la_{\eta r j}) =
    \sum_{k = 0}^{\iy} \sum_{l = 1}^m \bigl(\al'_{\eta r j} D(x, \la_{kl0}, \la_{\eta r j}) \al'_{kl0} 
    \tilde D(x, \la_{nqi}, \la_{kl0}) \\ -
    \al'_{\eta r j} D(x, \la_{kl1}, \la_{\eta r j}) \al'_{kl1} \tilde D(x, \la_{nqi}, \la_{kl1}) \bigr).                                    
\end{multline}
for $n, \eta \geq 0$, $q, r = \overline{1, m}$, $i, j  = 0, 1$. 

For each fixed $x \in [0, \pi]$, the relation \eqref{mainphi} can be
considered as a system of linear equations with respect to
$\vv_{nqi}(x)$, $n \geq 0$, $q = \overline{1, m}$, $i = 0, 1$. But
the series in \eqref{mainphi} converges only ``with brackets''. Therefore, it
is not convenient to use \eqref{mainphi} as a main equation of the inverse
problem. Below we will transfer \eqref{mainphi} to a linear equation in a
corresponding Banach space of sequences.

Introduce collections $G_n = \{ \rho_{nqi}\}_{q = \overline{1, m}, i = 0, 1}$, $n \ge 0$.
Fix $n$ and, for convenience, renumerate the elements of the collection: $G_n = \{ g_i \}_{i = 1}^{2m}$.
Consider a finite-dimensional space 
$B(G_n) = \bigl\{f \colon G_n \to \mathbb{C}^{m \times m} \bigr\}$ of matrix-functions $f$,
such that $f(g_i) = f(g_j)$ if $g_i = g_j$, with the norm
$$
    \| f \|_{B(G_n)} = \max \left\{ \max_i \| f(g_i) \|, \max_{i, j\colon g_i \ne g_j} \| f(g_i) - f(g_j) \| \cdot |g_i - g_j|^{-1} \right\}.
$$
Introduce a Banach space of infinite row vectors
$$
    B = \{ f = \{ f_n \}_{n = 0}^{\infty} \colon f_n \in B(G_n), \: 
    \| f\|_{B} := \sup_{n \ge 0} \| f_n \|_{B(G_n)} < \infty  \}.
$$

Fix $x \in [0, \pi]$. Lemma~\ref{lem:Schwarz} gives the following estimates:
$$
    \| \vv(x, g_i^2) \| \le C, \quad \| \vv(x, g_i^2) - \vv(x, g_j^2) \| \le C | g_i - g_j |, \quad
    g_i, g_j \in G_n,   
$$ 
where the constant $C$ does not depend on $n$. Therefore, $\vv(x, \rho^2)$ forms an element of $B$:
$$
    \vv(x, \rho^2)_{| B} := \{ \vv(x, \rho^2)_{| G_n}\}_{n \ge 0} \in B, \quad
    \vv(x, \rho^2)_{| G_n} = \{ \vv(x, \la_{nqi}) \}_{q = \overline{1, m}, i = 0, 1}.
$$
Denote $\psi(x) := \vv(x, \rho^2)_{| B}$, $\tilde \psi(x) := \tilde \vv(x, \rho^2)_{| B}$.
Then \eqref{mainphi} and \eqref{mainsol} can be transformed into the following relations in the 
Banach space $B$:
\begin{equation} \label{main}
    \tilde \psi(x) = \psi(x) (I + \tilde R(x)),
\end{equation}
\begin{equation} \label{smeqR}
   \tilde R(x) - R(x) = R(x) \tilde R(x),
\end{equation}
where $I$ is the identity operator in $B$, and $R(x)$, $\tilde R(x)$ are linear operators, 
acting from $B$ to $B$.~\footnote{
The action of operators $R(x)$ and $\tilde R(x)$ is, in fact, a multiplication
of an infinite row vector to an infinite matrix. 
It is more convenient to write operators to the right of operands, 
to keep the correct order in elementwise multiplication, 
that is the noncommutative multiplication of $m \times m$ matrices. 
}
The explicit form of $\tilde R(x)$ and $R(x)$ can be derived from \eqref{mainphi} and \eqref{mainsol}.
Further we investigate the operator $R(x)$, the same properties for $\tilde R(x)$ can be obtained symmetrically.

According to \eqref{mainphi} and \eqref{mainsol}, the operator $R(x)$ acts on an arbitrary element 
$\psi = \{ \psi_k \}_{k = 0}^{\infty} \in B$
in the following way:
\begin{equation} \label{defRsimp}
    (\psi R(x))_n = \sum_{k = 0}^{\infty} \psi_k R_{k, n}(x), \quad R_{k, n} \colon B(G_k) \to B(G_n), \quad k, n \ge 0,
\end{equation}
\begin{equation} \label{defRkn}
    (\psi_k R_{k, n}(x))(\rho_{nqi}) = \sum_{l = 1}^{m} \left(\psi_k(\rho_{kl0}) \al'_{kl0} D(x, \la_{nqi}, \la_{kl0}) -
    \psi_k(\rho_{kl1}) \al'_{kl1} D(x, \la_{nqi}, \la_{kl1}) \right).   
\end{equation}

\begin{lem} \label{lem:Rbound}
The series in \eqref{defRsimp} converge in $B(G_n)$-norm and the operator $R(x)$ is bounded and, moreover, compact on $B$.
\end{lem}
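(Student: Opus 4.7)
The plan is to exhibit $R(x)$ on $B$ as an infinite block matrix $\{R_{k,n}(x)\}_{k,n\ge 0}$ and to prove the block-norm bound $\|R_{k,n}(x)\|_{B(G_k)\to B(G_n)} \le C\xi_k/(|n-k|+1)$. Since $\{\xi_n\}\in l_2$ by \eqref{defOmega}, Cauchy--Schwarz yields $\sup_n \sum_k \|R_{k,n}(x)\| < \infty$, which gives boundedness of $R(x)$ on the row-sequence space $B$. Compactness will follow from approximation in operator norm by the finite-rank truncations $R^N(x)$ with $R^N_{k,n}(x) = R_{k,n}(x)$ for $k \le N$ and $0$ otherwise: each $R^N(x)$ is finite-rank because the blocks $B(G_k)$ for $k \le N$ are finite-dimensional, and the tail bound $\sum_{k > N}\|R_{k,n}(x)\| \to 0$ uniformly in $n$ then gives $\|R(x) - R^N(x)\|_B \to 0$.

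The main algebraic step is to rewrite the summand in \eqref{defRkn} so that the cancellations between the $j=0$ (original) and $j=1$ (model) data become explicit. Writing $D_j := D(x,\la_{nqi},\la_{klj})$ and telescoping,
\begin{equation*}
\psi_k(\rho_{kl0})\al'_{kl0}D_0 - \psi_k(\rho_{kl1})\al'_{kl1}D_1 = A_{kl} + B_{kl} + C_{kl},
\end{equation*}
with $A_{kl} = (\psi_k(\rho_{kl0}) - \psi_k(\rho_{kl1}))\al'_{kl0}D_0$, $B_{kl} = \psi_k(\rho_{kl1})\al'_{kl1}(D_0 - D_1)$, and $C_{kl} = \psi_k(\rho_{kl1})(\al'_{kl0} - \al'_{kl1})D_0$. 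Lemma~\ref{lem:Schwarz} together with Assumption~2 immediately gives $\|A_{kl}\|, \|B_{kl}\| \le C\|\psi_k\|_{B(G_k)}|\rho_{kl}-\tilde\rho_{kl}|/(|n-k|+1)$, and $|\rho_{kl}-\tilde\rho_{kl}|$ is one of the summands comprising $\xi_k$ in \eqref{defxi}.

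The delicate piece is $C_{kl}$: individual differences $\al'_{kl0}-\al'_{kl1}$ need not be small, only the group sums $\al_k^{(s)} - \tilde\al_k^{(s)}$ (over $l \in J_s$) and the global $\al_k - \tilde\al_k$ are controlled by $\xi_k$. I therefore split $\sum_{l=1}^m = \sum_{s=1}^p \sum_{l\in J_s}$ and telescope once more inside each $J_s$, replacing $\psi_k(\rho_{kl1})$ and $D_0$ by the group-representative values $\psi_k(\rho_{km_s 1})$ and $D(x,\la_{nqi},\la_{km_s 0})$; the corrections are bounded by $|\rho_{kl}-\rho_{km_s}|$ and $|\tilde\rho_{kl}-\tilde\rho_{km_s}|$, which are further contributions to $\xi_k$. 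The residual main term after this splitting is $\psi_k(\rho_{km_s 1})(\al_k^{(s)} - \tilde\al_k^{(s)})D(x,\la_{nqi},\la_{km_s 0})$. A second telescoping across $s$ further reduces this to a single term proportional to $(\al_k - \tilde\al_k)$, estimated directly by $\xi_k$, plus differences of group representatives that carry an extra smallness factor $1/n$ (coming from \eqref{asymptrho} applied to distinct $\om$-values); this extra $1/n$ exactly absorbs the factor $k$ arising from the weaker estimate $\|\al_k^{(s)} - \tilde\al_k^{(s)}\| \le k\xi_k$. After collecting, one obtains $\|R_{k,n}(x)\| \le C\xi_k/(|n-k|+1)$, and the same argument, applied to differences in the $n$-variable, controls the Lipschitz-in-$g$ part of the $B(G_n)$-norm via the second and fourth bounds of Lemma~\ref{lem:Schwarz}. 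The main obstacle will be the bookkeeping in $C_{kl}$: the two levels of cancellation (global via $\al_k - \tilde\al_k$, group-wise via $\al_k^{(s)} - \tilde\al_k^{(s)}$) must be arranged to yield only $l_2$-summable quantities, with no residual term of order $k\xi_k$ left over.
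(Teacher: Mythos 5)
Your proposal is correct and follows essentially the same route as the paper: the same three-level telescoping (pairwise in $j=0,1$, within each group $J_s$, and across groups), the same block estimate $\|R_{k,n}(x)\|\le C\xi_k/(|n-k|+1)$, and the same compactness argument via finite truncations. The only quibble is a notational slip in the last paragraph: the smallness factor absorbing the weaker bound $\|\al_k^{(s)}-\tilde\al_k^{(s)}\|\le k\xi_k$ is $|\rho_{km_s1}-\rho_{k11}|=O(1/k)$, i.e.\ $1/k$ rather than $1/n$, exactly as in the paper's proof.
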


\begin{proof}
Let $\psi = \{ \psi_k\}_{k = 0}^{\infty} \in B$. Fix $x \in [0, \pi]$ and $n, k \ge 0$. 
Denote $\psi_{klj} := \psi_k(\rho_{klj})$, 
$\eta_{nqi, k} := (\psi_k R_{k, n}(x))(\rho_{nqi})$. Let us show that 
\begin{equation} \label{smeqxi}
\| \eta_{nqi, k}\| \le \frac{ C \xi_k \| \psi_k \|_{B(G_k)}}{|n - k| + 1},  \quad q = \overline{1, m}, \: i = 0, 1,
\end{equation}
where $\xi_k$ was defined in \eqref{defxi} and the constant $C$ does not depend on $n$ and $k$.

Using \eqref{defRkn}, we derive
\begin{multline*}
    \eta_{nqi, k} = \sum_{l = 1}^m \Bigl[ (\psi_{kl0} - \psi_{kl1})\al'_{kl0} D(x, \la_{nqi}, \la_{kl0}) \\ +
    \psi_{kl1} \al'_{kl0} (D(x, \la_{nqi}, \la_{kl0}) -  D(x, \la_{nqi}, \la_{kl1})) + 
    \psi_{kl1} (\al'_{kl0} - \al'_{kl1}) D(x, \la_{nqi}, \la_{kl1})  \Bigr].
\end{multline*}
Since
$$
    \| \psi_{kl1} \| \le \| \psi_k \|_{B(G_k)}, \quad \| \psi_{kl0} - \psi_{kl1} \| \le |\rho_{kl0} - \rho_{kl1}| \| \psi_k \|_{B(G_k)} \le \xi_k \| \psi_k \|_{B(G_k)},
    \quad l = \overline{1, m},
$$
$\| \al'_{kl0} \| \le C$, and $D(x, \la_{nqi}, \la_{klj})$ satisfy estimates of Lemma~\ref{lem:Schwarz}, one
easily obtain the estimate \eqref{smeqxi} for the first two terms.

Recall that $J_s = \{q \colon \om_q = \om_{m_s} \}$, $s = \overline{1, p}$, are indices in
groups with equal terms $\om_q$ in asymptotics \eqref{asymptrho}.
Continue to work with the third term:
\begin{multline*}
\sum_{l = 1}^m \psi_{kl1} (\al'_{kl0} - \al'_{kl1}) D(x, \la_{nqi}, \la_{kl1})  =
\sum_{s = 1}^p \biggl[ \sum_{l \in J_s} (\psi_{kl1} - \psi_{k m_s 1}) (\al'_{kl0} - \al'_{kl1}) D(x, \la_{nqi}, \la_{kl1}) \\ +
\sum_{l \in J_s} \psi_{k m_s 1} (\al'_{kl0} - \al'_{kl1}) (D(x, \la_{nqi}, \la_{kl1}) - D(x, \la_{nqi}, \la_{k m_s 1})) + 
\psi_{k m_s 1} (\al_k^{(s)} - \tilde \al_k^{(s)}) D(x, \la_{nqi}, \la_{k m_s 1}) \biggr]
\end{multline*}
Applying the estimate
$$
    \| \psi_{kl1} - \psi_{k m_s 1} \| \le |\rho_{kl1} - \rho_{k m_s 1}| \| \psi_k \|_{B(G_k)} 
    \le \xi_k \| \psi_k \|_{B(G_k)}, \quad l \in J_s,
$$
estimates for $\al'_{klj}$ and Lemma~\ref{lem:Schwarz}, we arrive at \eqref{smeqxi} for the first two terms again
and continue to investigate the third one.
\begin{multline*}
\sum_{s = 1}^p \psi_{k m_s 1} (\al_k^{(s)} - \tilde \al_k^{(s)}) D(x, \la_{nqi}, \la_{k m_s 1}) =
\sum_{s = 1}^p (\psi_{k m_s 1} - \psi_{k 1 1}) (\al_k^{(s)} - \tilde \al_k^{(s)}) D(x, \la_{nqi}, \la_{k m_s 1}) \\ +
\sum_{s = 1}^p \psi_{k11} (\al_k^{(s)} - \tilde \al_k^{(s)})(D(x, \la_{nqi}, \la_{k m_s 1}) - D(x, \la_{nqi}, \la_{k11})) +
\psi_{k11} (\al_k - \tilde \al_k) D(x, \la_{nqi}, \la_{k11}).
\end{multline*}
Now we use the estimates
$$
    \| \psi_{k m_s 1} - \psi_{k11} \| \le |\rho_{k m_s 1} - \rho_{k11}| \| \psi_k \|_{B(G_k)} \le \frac{\| \psi_k \|_{B(G_k)}}{k},
$$
$$
    \| \al_k^{(s)} - \tilde \al_k^{(s)} \| \le k \xi_k, \quad \| \al_k - \tilde \al_k \| \le \xi_k,
$$
(following from \eqref{asymptalpha1}, \eqref{asymptalpha3} and similar asymptotics
for $\tilde \al_{nq}$) and Lemma~\ref{lem:Schwarz}. Finally we arrive at~\eqref{smeqxi}.

Analogously one can obtain the estimate
$$
\| \eta_{nqi, k} - \eta_{nlj, k}\| \le \frac{ C \xi_k \| \psi_k \|_{B(G_k)}|\rho_{nqi} - \rho_{nlj}|}{|n - k| + 1},  \quad q, l = \overline{1, m}, \: i, j = 0, 1.
$$
Together with \eqref{smeqxi}, this gives 
\begin{equation} \label{estR}
    \| R_{k, n}(x)\|_{B(G_k) \to B(G_n)} \le \frac{C \xi_k}{|n - k| + 1}, \quad k, n \ge 0,
\end{equation}
where the constant $C$ does not depend on $n$ and $k$.
Substitute \eqref{estR} into \eqref{defRsimp} and use \eqref{defOmega}:
$$
    \| \psi R(x) \|_B = \sup_{n \ge 0} \left\| (\psi R(x))_n\right\| \le \| \psi \|_B 
    \left( \sum_{k = 0}^{\infty} \frac{C \xi_k}{|n - k| + 1}\right) \le C \| \psi \|_B.
$$
Hence $\| R(x) \|_{B \to B} < \infty$.

The operator $R(x)$ can be approximated by a sequence of finite-dimensional operators in $B$.
Indeed, let $R^s_{k, n}(x) = R_{k, n}(x)$ for all $n \ge 0$, $0 \le k \le s$, and all the other components
of $R^s(x)$ equal zero. It is easy to show using \eqref{estR}, that $\lim\limits_{s \to \infty} \| R^s(x) - R(x)\|_{B \to B} = 0$.
Therefore the operator $R(x)$ is compact.
\end{proof}

\begin{thm} \label{thm:main}
For each fixed $x \in [0, \pi]$, 
the operator $I + \tilde R(x)$ has a bounded inverse operator, 
and equation \eqref{main} is uniquely solvable in the Banach space $B$.
\end{thm}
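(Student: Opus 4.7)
The plan is to exhibit an explicit bounded left inverse of $I + \tilde R(x)$, namely $I - R(x)$, and then invoke the Fredholm alternative to upgrade this to a two-sided bounded inverse. The unique solvability of \eqref{main} is then immediate.

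First, I would verify that \eqref{smeqR}, which was derived entrywise from \eqref{mainsol}, actually holds as an operator identity on $B$. The operator $R(x)$ is bounded (in fact compact) on $B$ by the analog of Lemma~\ref{lem:Rbound} applied to $R$ itself, and the same argument yields compactness and boundedness of $\tilde R(x)$. Using the componentwise estimate \eqref{estR} together with the summability \eqref{defOmega}, the double series that represents $R(x)\tilde R(x)$ on an arbitrary $\psi \in B$ can be shown to converge absolutely in the norm of $B$, and the resulting operator identity is exactly \eqref{smeqR}.

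Once \eqref{smeqR} is available as an identity in $\mathcal{L}(B)$, a direct algebraic manipulation gives
\[
(I - R(x))(I + \tilde R(x)) = I + \tilde R(x) - R(x) - R(x)\tilde R(x) = I + \bigl(\tilde R(x) - R(x) - R(x)\tilde R(x)\bigr) = I.
\]
Thus $I - R(x)$ is a bounded left inverse of $I + \tilde R(x)$, so $I + \tilde R(x)$ is injective on $B$. Since $\tilde R(x)$ is compact, $I + \tilde R(x)$ is a Fredholm operator of index zero, so by the Fredholm alternative injectivity upgrades to bijectivity, and the inverse is automatically bounded. The inverse must coincide with $I - R(x)$.

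For unique solvability of \eqref{main}, given any $\tilde \psi(x) \in B$ (and, in particular, the one built from $\tilde L$), the bounded invertibility of $I + \tilde R(x)$ forces $\psi(x) = \tilde \psi(x)(I - R(x))$, proving existence and uniqueness in $B$. The main obstacle I anticipate is the rigorous lifting of the pointwise formula \eqref{mainsol} to the operator identity \eqref{smeqR} in $\mathcal{L}(B)$; this requires careful use of the block estimate \eqref{estR} so that the Fubini-type interchange of summations and the convergence of $R(x)\tilde R(x)$ in operator norm are justified uniformly in $x \in [0,\pi]$.
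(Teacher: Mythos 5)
Your proposal is correct and rests on the same two ingredients as the paper's proof: the operator identity \eqref{smeqR} and the boundedness/compactness of $R(x)$, $\tilde R(x)$ from Lemma~\ref{lem:Rbound}. The difference is in the last step. The paper obtains the two-sided inverse directly: from \eqref{smeqR} it gets $(I - R(x))(I + \tilde R(x)) = I$, and then derives the companion identity $(I + \tilde R(x))(I - R(x)) = I$ ``symmetrically,'' i.e.\ by interchanging the roles of $L$ and $\tilde L$ in the derivation of \eqref{mainsol}, which yields the mirror relation $\tilde R(x) - R(x) = \tilde R(x) R(x)$. You instead stop at the one-sided identity and invoke the Fredholm alternative for $I + \tilde R(x)$ with $\tilde R(x)$ compact; this is a legitimate shortcut that spares you the symmetric derivation, at no real cost since compactness is already proved. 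One small caution: under the paper's right-action convention ($\psi \mapsto \psi(I + \tilde R(x))$, with products composed left-to-right), the identity $(I - R(x))(I + \tilde R(x)) = I$ directly gives \emph{surjectivity} of the map $\psi \mapsto \psi(I + \tilde R(x))$ rather than injectivity as you state; this is immaterial, since for $I$ plus a compact operator the Fredholm alternative makes the two equivalent. Your emphasis on rigorously lifting the pointwise relations \eqref{mainsol} to the operator identity \eqref{smeqR} via the block estimate \eqref{estR} is well placed: the paper asserts this passage without detail, and the absolute convergence of the double series representing $R(x)\tilde R(x)$, uniform in $x$, is exactly what needs to be checked.
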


\begin{proof}
It follows from \eqref{smeqR}, that for each fixed $x \in [0, \pi]$, $(I - R(x)) (I + \tilde R(x)) = I$. 
Symmetrically, one gets $(I + \tilde R(x))(I - R(x)) = I$. 
Hence the operator $(I + \tilde R(x))^{-1}$ exists, and it is a linear bounded operator by Lemma~\ref{lem:Rbound}.
\end{proof}

Equation \eqref{main} is called {\it the main equation} of Inverse Problem 1.
Theorem~\ref{thm:main} together with Lemmas~\ref{lem:asymptrho}, \ref{lem:asymptalpha1}, 
\ref{lem:asymptalpha2} and \ref{lem:ranks} gives the necessity part in Theorem~\ref{thm:1}.

Now turn to the problem $L^*$, defined in \eqref{defL*}. 
Take the model problem $\tilde L^*  = L^*(\tilde Q(x), \tilde h, \tilde H)$
with the same potential $\tilde Q$ as the problem $\tilde L$ has. 
By virtue of \eqref{eqM}, the problems $L$ and $L^*$ (similarly, $\tilde L$ and $\tilde L^*$)
have the same spectral data. Symmetrically to \eqref{mainphi}, we obtain the relations 
\begin{equation} \label{mainphi*}
    \tilde \vv^*_{nqi}(x) = \vv^*_{nqi}(x) + \sum_{k = 0}^{\iy}
    \sum_{l = 1}^m \left(\tilde D(x, \la_{kl0}, \la_{nqi}) \al'_{kl0} \vv^*_{kl0}(x) -
     \tilde D(x, \la_{kl1}, \la_{nqi}) \al'_{kl1} \vv^*_{kl1}(x)\right),                                                    
\end{equation}
for each fixed $x \in [0, \pi]$, $n \ge 0$, $q = \overline{1, m}$, $i = 0, 1$.
Similarly to $B$, introduce the Banach space $B^*$ of column vectors. Then
$\psi^* = \{ \psi^*_k \}_{k = 0}^{\infty}$, $\psi_k^* = [\vv^*_{klj}(x)]_{l = \overline{1, m}, j = 0, 1}$
satisfy the linear equation
\begin{equation} \label{main*}
    (I + \tilde R^*(x)) \psi^*(x) = \tilde \psi^*(x)
\end{equation}
in $B^*$ for each fixed $x \in [0, \pi]$. Here $\tilde \psi^*(x)$ and $\tilde R^*(x)$ 
are constructed symmetrically to $\tilde \psi(x)$
and $\tilde R(x)$ by the model problem $\tilde L^*$ and the spectral data $\Lambda$, $\tilde \Lambda$.

\begin{lem} \label{lem:main*}
For each fixed $x \in [0, \pi]$, equation \eqref{main*} is uniquely solvable in the Banach space $B^*$
if and only if equation \eqref{main} is uniquely solvable.
\end{lem}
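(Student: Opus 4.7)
The plan is to identify $\tilde R^*(x)$ as a formal adjoint of $\tilde R(x)$ under a natural bilinear pairing, and then to invoke the Fredholm alternative. First I would verify that $\tilde R^*(x)$ is compact on $B^*$: the argument of Lemma~\ref{lem:Rbound} carries over verbatim, since the bounds of Lemma~\ref{lem:Schwarz} are symmetric under the interchange of the two spectral arguments of $\tilde D$, and the reversal of the order of matrix multiplication does not affect the operator-norm estimate \eqref{estR} or the finite-rank truncation argument. Consequently, both $I + \tilde R(x)$ and $I + \tilde R^*(x)$ are Fredholm of index zero on their respective Banach spaces, and each is bijective iff injective. The lemma therefore reduces to showing that their kernels vanish simultaneously.

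Next, I would introduce the bilinear pairing
\[
\langle \psi, \psi^* \rangle := \sum_{n, q, i} (-1)^i \operatorname{tr}\bigl( \psi_n(\rho_{nqi})\, \alpha'_{nqi}\, \psi^*_n(\rho_{nqi}) \bigr)
\]
between $B$ and $B^*$, defined initially on finitely supported sequences (on which convergence is automatic by $\|\alpha'_{nqi}\| \le C$). A direct rearrangement of the quadruple sums from \eqref{defRkn} and its starred analog establishes the adjoint identity
\[
\langle \psi\, \tilde R(x), \psi^* \rangle = \langle \psi, \tilde R^*(x)\, \psi^* \rangle,
\]
both sides collapsing to the common expression
\[
\sum_{(n,q,i),(k,l,j)} (-1)^{i+j}\operatorname{tr}\bigl(\psi_k(\rho_{klj})\, \alpha'_{klj}\, \tilde D(x, \la_{nqi}, \la_{klj})\, \alpha'_{nqi}\, \psi^*_n(\rho_{nqi})\bigr).
\]

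The transfer of injectivity then proceeds by contradiction. Suppose $\psi_0 \in \ker(I + \tilde R(x)) \setminus \{0\}$. From $\psi_0 = -\psi_0\, \tilde R(x)$ and the rank condition (R), one sees that $\psi_0$ cannot be annihilated by right multiplication by every $\alpha'_{klj}$ on the representative indices, so some finitely supported $\psi^* \in B^*$ satisfies $\langle \psi_0, \psi^* \rangle \neq 0$. If additionally $\ker(I + \tilde R^*(x)) = 0$, then $I + \tilde R^*(x)$ is surjective, giving $\psi^* = (I + \tilde R^*(x)) \tilde \psi^*$ for some $\tilde \psi^* \in B^*$; the adjoint identity yields $\langle \psi_0, \psi^* \rangle = \langle \psi_0(I + \tilde R(x)), \tilde \psi^* \rangle = 0$, a contradiction. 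The reverse implication is symmetric.

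I expect the main obstacle to be the potential non-convergence and partial degeneracy of the pairing: the sum need not converge for arbitrary $(\psi, \psi^*) \in B \times B^*$ (since $\|\alpha'_{nqi}\|$ is bounded but not summable), and $\alpha'_{nqi}$ vanishes on the redundant indices introduced by the symbolic bookkeeping. Both issues must be handled carefully, either by a truncation argument that approximates $\tilde \psi^*$ by finitely supported sequences while preserving the adjoint identity in the limit, or by invoking the relation \eqref{prodVal} together with (R) to confirm that non-triviality of the kernel survives the degeneracy of the pairing.
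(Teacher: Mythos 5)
Your proof is correct in outline, but it takes a genuinely different route from the paper. The paper proves the implication directly: assuming $I+\tilde R(x)$ is invertible, it writes down the explicit matrix components $\tilde P_{rst,nqi}(x)$ of the inverse, pairs the identity $\tilde P(x)(I+\tilde R(x))=I$ against an arbitrary solution $\gamma^*$ of the homogeneous starred equation, and after a telescoping double sum concludes $\al'_{klj}\gamma^*_{klj}(x)=0_m$ for all indices, whence $\gamma^*=0$ by the homogeneous equation itself. You instead package the same underlying transpose structure into a bilinear pairing, verify the adjoint identity $\langle \psi\tilde R(x),\psi^*\rangle=\langle\psi,\tilde R^*(x)\psi^*\rangle$ (which does check out against \eqref{defRkn} and \eqref{mainphi*}), and run a Fredholm-alternative/separating-functional argument. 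What your version buys is that you never need the explicit form of the inverse operator or the estimate \eqref{estP}; what it costs is that you must establish (i) convergence of the pairing on all of $B\times B^*$, which requires exactly the regrouping by the index sets $J_s$ used in Lemma~\ref{lem:Rbound} (plain truncation does not converge in the sup-norm of $B^*$, but absolute convergence of the grouped series, with $n$-th block $O(\xi_n)$, makes the limit passage legitimate), and (ii) non-degeneracy of the pairing on $\ker(I+\tilde R(x))$. On point (ii) your stated reason is slightly off: the correct observation is not about individual products $\psi_0(\rho_{klj})\al'_{klj}$ but about the signed sums $\sum_{(l,j):\rho_{klj}=g}(-1)^j\psi_0(g)\al'_{klj}$ over coinciding nodes --- if all of these vanish then $\psi_0\tilde R(x)=0$ and hence $\psi_0=0$ directly from the homogeneous equation, so some signed sum is nonzero and a finitely supported $\psi^*$ separating $\psi_0$ exists; the relation \eqref{prodVal} is not the right tool here since it concerns the operator $L$ rather than an abstract kernel element. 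With these two repairs (both achievable with estimates already present in the paper), your argument is a valid alternative proof, and like the paper's it uses only conditions (A) and (R), so it remains applicable in the sufficiency part of Theorem~\ref{thm:1}.
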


\begin{proof}
Fix $x \in [0, \pi]$. 
In view of Lemma~\ref{lem:Rbound}, the operators $\tilde R(x)$ and $\tilde R^*(x)$
are compact in the corresponding Banach spaces. Therefore it is sufficient to consider
homogeneous equations $\gamma(x) (I + \tilde R(x)) = 0$ and $(I + \tilde R^*(x)) \gamma^*(x) = 0$.
Let us prove only the ``if'' part, since the ``only if'' part can be proved symmetrically.

Suppose the equation $\gamma(x) (I + \tilde R(x)) = 0$ is uniquely solvable. Then there exists
a bounded inverse operator $\tilde P(x) = (I + \tilde R(x))^{-1}$ of the following form
$$
    (\psi \tilde P(x))_{nqi} = \sum_{k = 0}^{\infty} \sum_{l = 1}^m 
    \left( \psi_{kl0} \tilde P_{kl0, nqi}(x) - \psi_{kl1} \tilde P_{kl1, nqi}(x) \right),    
$$
$$
 \tilde P(x) = [\tilde P_{k, n}(x)]_{k, n \ge 0} = [\tilde P_{klj, nqi}(x)], \quad
 \psi = [\psi_{klj}] \in B, \quad n, k \ge 0 \: q, l = \overline{1, m}, \: i, j = 0, 1.
$$
It follows from $(\psi \tilde P(x)) \in B$, that
\begin{equation} \label{estP}
    \| \tilde P_{rst, nqi}(x) - \tilde P_{rst, nlj}(x) \| \le C |\rho_{nqi} - \rho_{nlj}|, \quad
    r, n \ge 0, \: s, q, l = \overline{1, m}, \: t, i, j = 0, 1. 
\end{equation}

For simplicity, assume that all the values $\{ \la_{nqi}\}$ are distinct 
(the general case requires minor modifications).
The relation $\tilde P(x) (I + \tilde R(x)) = I$ yields
\begin{equation} \label{relPR}
    \tilde P_{rst, nqi}(x) + \sum_{k = 0}^{\iy} \sum_{l = 1}^n 
    \left( \tilde P_{rst, kl0}(x) \al'_{kl0} \tilde D(x, \la_{nqi}, \la_{kl0}) -
    \tilde P_{rst, kl1}(x) \al'_{kl1} \tilde D(x, \la_{nqi}, \la_{kl1})\right) = \de_{rst, nqi},
\end{equation}
$$
    n, r \ge 0, \quad s, q = \overline{1, m}, \quad t, i = 0, 1,
$$
where $\de_{rst, nqi} = I_m$, if $(r, s, t) = (n, q, i)$, and
$\de_{rst, nqi} = 0_m$ otherwise.

Let $\ga^*(x) = [\ga^*_{nqi}(x)]$ be a solution of the equation $(I + \tilde R^*(x)) \ga^*(x) = 0$:
\begin{equation} \label{eqvga*}
    \ga^*_{nqi}(x) + \sum_{k = 0}^{\iy} \sum_{l = 1}^m \left(
    \tilde D(x, \la_{kl0}, \la_{nqi}) \al'_{kl0} \ga^*_{kl0}(x) -
     \tilde D(x, \la_{kl1}, \la_{nqi}) \al'_{kl1} \ga^*_{kl1}(x) \right) = 0_m,
\end{equation}
$n \ge 0$, $q = \overline{1, m}$, $i = 0, 1$.
Then
\begin{multline*}
\sum_{n = 0}^{\iy} \sum_{q = 1}^m \sum_{i = 0}^1 (-1)^i \tilde P_{rst, nqi}(x) \al'_{nqi} \ga^*_{nqi}(x) \\ + 
\sum_{n, k = 0}^{\iy} \sum_{q, l = 1}^m \sum_{i, j = 0}^1 (-1)^{i + j} \tilde P_{rst, nqi}(x) \al'_{nqi}
\tilde D(x, \la_{klj}, \la_{nqi}) \al'_{klj} \ga^*_{klj}(x) = 0_m, \quad r \ge 0, \: s = \overline{1, m}, \: t = 0, 1.
\end{multline*}
Convergence of the series can be proved with help of \eqref{estP}. Using \eqref{relPR}, we obtain
$$
\sum_{n = 0}^{\iy} \sum_{q = 1}^m \sum_{i = 0}^1 (-1)^i \tilde P_{rst, nqi}(x) \al'_{nqi} \ga^*_{nqi}(x) +
\sum_{k = 0}^{\iy} \sum_{l = 1}^m \sum_{j = 0}^1 (-1)^j (\de_{rst, klj} - \tilde P_{rst, klj}(x)) \al'_{klj} \ga^*_{klj}(x) = 0_m.    
$$
Consequently, $\al'_{klj} \ga^*_{klj}(x) = 0_m$ for all $k \ge 0$, $l = \overline{1, m}$, $j = 0, 1$.
In view of \eqref{eqvga*}, we conclude that $\ga^*(x) = 0$,
so the homogeneous equation $(I + \tilde R^*(x)) \ga^*(x) = 0$ is uniquely solvable.

\end{proof}

Note that we do not use in the proof of Lemma~\ref{lem:main*} the fact, that $\Lambda$ is the spectral data of $L$,
but use only properties (A) and (R). Therefore this lemma can be used in the sufficiency part.

The main equation gives us a constructive solution of Inverse problem~1.
Solving \eqref{main}, we find the vector $\psi(x)$, i.e. the matrix-functions $\vv_{nqi}(x)$.

Denote
\begin{equation} \label{defeps}
     \ee_0(x) = \sum_{k = 0}^{\iy} \sum_{l = 1}^m \left(\vv_{kl0}(x) \alpha'_{kl0} \tilde \vv^*_{kl0}(x) -
     \vv_{kl1}(x) \alpha'_{kl1} \tilde \vv^*_{kl1}(x) \right),
     \quad \ee(x) = -2 \ee_0'(x).                                                        
\end{equation}

\begin{lem}
The series in \eqref{defeps} converges absolutely and uniformly on $[0, \pi]$,
the function $\ee_0(x)$ is absolutely continuous, and $\ee(x) \in L_2((0, \pi), \mathbb{C}^{m \times m})$. 
\end{lem}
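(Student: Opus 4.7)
The plan is to establish the three assertions in turn.

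For uniform absolute convergence of \eqref{defeps}, I would mimic the multilayer decomposition from the proof of Lemma~\ref{lem:Rbound}. The summand
$$
\vv_{kl0}\alpha'_{kl0}\tilde\vv^*_{kl0} - \vv_{kl1}\alpha'_{kl1}\tilde\vv^*_{kl1}
$$
is first split into three pieces involving the differences $\vv_{kl0}-\vv_{kl1}$, $\alpha'_{kl0}-\alpha'_{kl1}$, $\tilde\vv^*_{kl0}-\tilde\vv^*_{kl1}$. The first and third pieces are directly bounded by $C\xi_k$ via the Schwarz-type estimates of Lemma~\ref{lem:Schwarz} together with the uniform bound $\|\alpha'_{klj}\|\le C$. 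The middle piece is handled by the further two-level grouping over $l\in J_s$ and then over $s$: the large differences $\|\alpha_k^{(s)}-\tilde\alpha_k^{(s)}\|\le k\xi_k$ (from \eqref{defxi}) are compensated by the $1/k$-size differences $\|\vv_{km_s j}-\vv_{k1j}\|$, $\|\tilde\vv^*_{km_s j}-\tilde\vv^*_{k1j}\|=O(1/k)$ that arise from $|\rho_{km_s j}-\rho_{k1j}|=O(1/k)$ via \eqref{asymptrho}, and the residual piece with $\sum_s(\alpha_k^{(s)}-\tilde\alpha_k^{(s)}) = \alpha_k-\tilde\alpha_k$ is of norm $O(\xi_k)$. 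Using $\sum_k\xi_k<\infty$ from \eqref{defOmega} gives uniform absolute convergence, and in particular continuity of $\ee_0$.

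For absolute continuity, I would differentiate the series termwise and argue that the resulting series converges in $L_2((0,\pi),\mathbb{C}^{m\times m})$. Once this is established, since each partial sum $S_N$ is $C^1$ with $S_N(x)-S_N(0)=\int_0^x S_N'(t)\,dt$, passing $N\to\infty$ (using uniform convergence on the left and $L^2$-convergence on the right) identifies $\ee_0(x)-\ee_0(0)$ with the integral of the $L^2$-limit. Hence $\ee_0$ is absolutely continuous and $\ee=-2\ee_0'\in L_2$ is immediate. So the heart of the matter is the $L^2$-estimate of the termwise derivative.

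For that $L^2$-estimate, which is the main obstacle, I would use \eqref{asymptphi} and \eqref{asymptdphi} to write
$$
\vv'_{klj}(x) = -\rho_{klj}\sin(\rho_{klj}x)\,I_m + b_{klj}(x),\qquad \|b_{klj}(x)\|\le C,
$$
and analogously for $(\tilde\vv^*_{klj})'$. The differentiated summand then splits into a part with bounded remainders $b_{klj}$, handled exactly as in the first paragraph with uniform contribution of size $O(\xi_k)$, plus an oscillatory leading part involving products $\rho_{klj}\sin(\rho_{klj}x)\,\alpha'_{klj}\cos(\tilde\rho_{klj}x)$ and their twins. Applying the same three-fold decomposition to the oscillatory part, the matrix coefficients of the trigonometric products reduce to expressions of norm $O((k+1)\xi_k)$, the extra factor $k+1$ relative to step one being precisely the $\rho_{klj}$ pulled out. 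Since $\{\rho_{klj}\}$ is an $l_2$-perturbation of the integer sequence $\{n\}$, the collections $\{\sin(\rho_{klj}x)\}$ and $\{\cos(\rho_{klj}x)\}$ are Bessel sequences in $L_2(0,\pi)$, so Bessel's inequality bounds the $L^2$-norm of the whole series by $C\bigl(\sum_k((k+1)\xi_k)^2\bigr)^{1/2}=C\,\Omega<\infty$ via \eqref{defOmega}. This completes the proof.
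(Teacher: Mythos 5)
Your proposal is correct and follows essentially the same route as the paper: the same multilayer telescoping (over $j$, then $l\in J_s$, then $s$) gives the uniform $O(\xi_k)$ bound for \eqref{defeps}, and the termwise derivative is split into a uniformly summable remainder plus an oscillatory leading part with $l_2$ coefficients of size $O((k+1)\xi_k)$. The only cosmetic difference is that the paper reduces the oscillatory part to the exact frequencies $\cos 2kx$ with coefficients $\gamma_{kl}=(k+1)(\rho_{kl0}-\rho_{kl1})$ and invokes Riesz--Fischer, whereas you keep the perturbed frequencies $2\rho_{klj}$ and invoke the Bessel-sequence property -- these are equivalent here (and note $\tilde\vv^*_{klj}(x)=\tilde\vv^*(x,\la_{klj})$, so its leading frequency is $\rho_{klj}$, not $\tilde\rho_{klj}$).
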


\begin{proof}
Here we use ideas similar to the proof of Lemma~\ref{lem:Rbound}. Group the terms of \eqref{defeps}
in the following way:
\begin{multline} \label{smeqeps}
\sum_{l = 1}^m \left(\vv_{kl0}(x) \alpha'_{kl0} \tilde \vv^*_{kl0}(x) -
     \vv_{kl1}(x) \alpha'_{kl1} \tilde \vv^*_{kl1}(x) \right) = 
\sum_{l = 1}^m (\vv_{kl0}(x) - \vv_{kl1}(x)) \al'_{kl0} \tilde \vv^*_{kl0}(x) \\ +
\sum_{l = 1}^m \vv_{kl1}(x) \al'_{kl0} (\tilde \vv^*_{kl0}(x) - \tilde \vv^*_{kl1}(x)) +
\sum_{s = 1}^p \sum_{l \in J_s} (\vv_{kl1}(x) - \vv_{k m_s 1}(x))(\al'_{kl0} - \al'_{kl1}) \tilde \vv^*_{kl1}(x) \\ +
\sum_{s = 1}^p \sum_{l \in J_s} \vv_{k m_s 1}(x) (\al'_{kl0} - \al'_{kl1})(\tilde \vv^*_{kl1}(x) - \tilde \vv^*_{k m_s 1}(x)) +
\sum_{s = 1}^p (\vv_{k m_s 1}(x) - \vv_{k11}(x)) (\al_k^{(s)} - \tilde \al_k^{(s)}) \tilde \vv^*_{k m_s 1}(x) \\ +
\sum_{s = 1}^p \vv_{k11}(x) (\al_k^{(s)} - \tilde \al_k^{(s)}) (\tilde \vv^*_{k m_s 1}(x) - \tilde \vv^*_{k11}(x)) + 
\vv_{k11}(x) (\al_k - \tilde \al_k) \tilde \vv^*_{k11}(x).
\end{multline}
It follows from \eqref{defxi}, \eqref{defOmega} and Lemma~\ref{lem:Schwarz}, that
the series in \eqref{defeps} converges absolutely and uniformly on $[0, \pi]$: 
$$
    \| \ee_0(x) \| \le C \sum_{k = 0}^{\infty} \xi_k < \iy.
$$
Let us analyze the derivative of the first term in \eqref{smeqeps}:
\begin{multline*}
    S'(x) := \frac{d}{dx} \Bigl( (\vv_{kl0}(x) - \vv_{kl1}(x)) \al'_{kl0} \tilde \vv^*_{kl0}(x) \Bigr) = \\
    (\vv'_{kl0}(x) - \vv'_{kl1}(x)) \al'_{kl0} \tilde \vv^*_{kl0}(x) +
    (\vv_{kl0}(x) - \vv_{kl1}(x)) \al'_{kl0} \tilde {\vv^*}'_{kl0}(x)
\end{multline*}
The other terms can be treated similarly.
Using asymptotics \eqref{asymptrho}, \eqref{asymptphi} and \eqref{asymptdphi} together with Schwarz's lemma, one gets
\begin{gather*}
  \vv'_{kl0}(x) - \vv'_{kl1}(x) = - \cos k x \gamma_{kl} x I_m + \frac{K_k(x)}{k + 1}, \quad \tilde \vv^*_{kl0}(x) = \cos k x + O(k^{-1}),  \\
  \vv_{kl0}(x) - \vv_{kl1}(x) = - \sin k x \frac{\gamma_{kl}}{k + 1} x I_m + \frac{K_k(x)}{(k + 1)^2}, \quad 
  \tilde {\vv^*}'_{kl0}(x) = - k \sin k x + O(1),
\end{gather*}
where $\gamma_{kl} = (k + 1)(\rho_{kl0} - \rho_{kl1})$, $\{ \ga_{kl} \} \in l_2$, 
$K_k(x)$ denote various sequences of continuous on $[0, \pi]$ matrix functions, such that
$\{ \max\limits_x \| K_k(x)  \| \} \in l_2$. Then
$$
    S'(x) = - \cos 2 k x \ga_{kl} x \al'_{kl0} + \frac{K_k(x)}{k + 1}.
$$
By the Riesz-Fischer theorem, 
$$
    x \sum_{k = 0}^{\infty} \cos 2 k x \ga_{kl} \al'_{kl0} \in L_2((0, \pi), \mathbb{C}^{m \times m}).
$$
The series $\sum\limits_{k = 0}^{\infty} \dfrac{K_k(x)}{k + 1}$ converges absolutely and uniformly with respect to
$x \in [0, \pi]$. Hence $\ee(x) \in L_2((0, \pi), \mathbb{C}^{m \times m})$.
\end{proof}

The next lemma gives formulas for recovering 
the potential $Q(x)$ and the coefficients of the boundary conditions $h$
and $H$. It can be proved similarly to \cite[Lemma~8]{Bond11}.

\begin{lem}
The following relations hold
\begin{equation} \label{relQ}
    Q(x) = \tilde Q(x) + \ee(x), \quad
    h = \tilde h - \ee_0(0), \quad H = \tilde H + \ee_0(\pi),                           
\end{equation}
\end{lem}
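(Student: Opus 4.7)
The plan is to read off $Q$, $h$, $H$ from three complementary pieces of information contained in the main equation \eqref{mainphila}: the identity in $\la$ (giving $Q$), the values at $x = 0$ (giving $h$), and the common class $A(\om)$ shared by $L$ and $\tilde L$ (giving $H$).

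Rewrite \eqref{mainphila} as $\vv(x,\la) = \tilde\vv(x,\la) - A(x,\la)$, where
$$A(x,\la) := \sum_{k,l,j}(-1)^j \vv_{klj}(x)\,\al'_{klj}\,\tilde D(x,\la,\la_{klj}).$$
Applying the operator $-\partial_x^2 + Q - \la$ to both sides and using $-\vv'' + Q\vv = \la\vv$ together with $-\tilde\vv'' + \tilde Q\tilde\vv = \la\tilde\vv$ yields
$$(Q(x) - \tilde Q(x))\tilde\vv(x,\la) = QA - \la A - A''.$$
I would then compute $A''$ termwise, substituting $\vv''_{klj} = (Q - \la_{klj} I_m)\vv_{klj}$, and use the two identities
$$\partial_x \tilde D(x,\la,\mu) = \tilde\vv^*(x,\mu)\tilde\vv(x,\la), \qquad (\la_{klj}-\la)\tilde D(x,\la,\la_{klj}) = \tilde\vv^*_{klj}\,\tilde\vv'(x,\la) - {\tilde\vv^*}'_{klj}\,\tilde\vv(x,\la),$$
both immediate from the integral/Wronskian form of $\tilde D$ given after \eqref{defD}. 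The $Q\vv_{klj}$-piece of $a''_{klj}$ cancels with the $QA$ term, and applying the Wronskian identity to the surviving $(\la_{klj}-\la)\vv_{klj}\al'_{klj}\tilde D$ produces an exact collapse: each summand $Qa_{klj}-\la a_{klj}-a''_{klj}$, with $a_{klj} := \vv_{klj}\al'_{klj}\tilde D(\cdot,\la,\la_{klj})$, reduces to $-2(\vv_{klj}\al'_{klj}\tilde\vv^*_{klj})'\tilde\vv$. Summing over $k,l,j$ and recalling \eqref{defeps} yields
$$(Q - \tilde Q)\tilde\vv(x,\la) = -2\ee_0'(x)\tilde\vv(x,\la) = \ee(x)\tilde\vv(x,\la).$$
Since $\det\tilde\vv(x,\la)$ is a non-constant entire function of $\la$ for each fixed $x$, we can cancel $\tilde\vv(x,\la)$ and conclude $Q(x) = \tilde Q(x) + \ee(x)$ almost everywhere.

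For $h$, differentiate $\vv = \tilde\vv - A$ in $x$ and evaluate at $x = 0$: since $\tilde D(0,\la,\mu) = 0$ and $\vv_{klj}(0) = \tilde\vv^*_{klj}(0) = \tilde\vv(0,\la) = I_m$, while $\vv'(0,\la) = h$ and $\tilde\vv'(0,\la) = \tilde h$, one reads off
$$h - \tilde h = -A'(0,\la) = -\sum_{k,l,j}(-1)^j \al'_{klj} = -\ee_0(0).$$
For $H$, I use the common class constraint $L,\tilde L \in A(\om)$, which gives $(h-\tilde h) + (H-\tilde H) + \tfrac12\int_0^\pi (Q-\tilde Q)\,dx = 0$; substituting $Q - \tilde Q = -2\ee_0'$ integrates to $H - \tilde H = \ee_0(\pi) - \ee_0(0) - (h - \tilde h) = \ee_0(\pi)$. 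The main obstacle is the analytic justification of termwise differentiation (twice) of the series defining $A$ together with the cancellations above; this is handled by the same grouping scheme that was used in \eqref{smeqeps} for $\ee_0$, combined with the Schwarz-type estimates of Lemma~\ref{lem:Schwarz} applied to the products $\vv_{klj}\al'_{klj}\tilde D(x,\la,\la_{klj})$ and their $x$-derivatives.
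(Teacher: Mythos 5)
Your argument is correct and is essentially the standard one: the paper itself gives no proof here, deferring to \cite[Lemma~8]{Bond11}, and that proof (modeled on \cite[Section~1.6.2]{FY01}) proceeds exactly as you do for $Q$ and $h$ — apply $-\partial_x^2+Q-\la$ to \eqref{mainphila}, use $\partial_x\tilde D(x,\la,\mu)=\tilde\vv^*(x,\mu)\tilde\vv(x,\la)$ and the Wronskian identity to collapse each bracket to $-2(\vv_{klj}\al'_{klj}\tilde\vv^*_{klj})'\tilde\vv(x,\la)$, then evaluate the once-differentiated relation at $x=0$. Your computations check out, including the cancellation of $\tilde\vv(x,\la)$ (one only needs $\det\tilde\vv(x,\cdot)\not\equiv 0$ for each fixed $x$), and the convergence of $\ee_0(0)=\sum_k(\al_k-\tilde\al_k)$ via the grouping of \eqref{smeqeps} and \eqref{defOmega}. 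The one point where you take a slightly different route is $H$: rather than evaluating the boundary form at $x=\pi$ (e.g.\ via $V(\Phi)=0_m$ and \eqref{mainPhi}), you subtract the two constraints $h+H+\tfrac12\int_0^\pi Q\,dx=\om=\tilde h+\tilde H+\tfrac12\int_0^\pi\tilde Q\,dx$; this is legitimate in the present (necessity) setting, since both $L$ and $\tilde L$ are assumed to lie in $A_{1,2}(\om)$ and $\ee_0$ is absolutely continuous, and it buys you a shorter derivation at the cost of not being a purely local identity at $x=\pi$.
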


Thus, we obtain the following algorithm for the solution of
Inverse Problem~1.

\medskip

{\bf Algorithm 1.} Given the data $\Lambda.$
\begin{enumerate}
\item Choose $\tilde L \in A(\om)$, and calculate $\tilde \psi(x)$
and $\tilde R(x).$
\item Find $\psi(x)$ by solving equation~\eqref{main}, and calculate
$\vv_{nqi}(x).$
\item Construct $Q(x)$, $h$ and $H$ by~\eqref{relQ}.
\end{enumerate}

\medskip

{\large \bf 5. Conditions (M), (E), (C) and (PW)}\\

In this section, we establish the connection between the solvability of the main equation (M),
the condition (E) and the completeness of some system of functions (C). The condition (C)
and its equivalent reformulation (PW) will be given further in this section.

Let $\om \in \mathcal{D}$ and data $\Lambda \in \mbox{Sp}$ satisfy conditions (A) and (R) of Theorem~\ref{thm:1}.
Let $\tilde L$ be an arbitrary problem from the class $A_{1,2}(\om)$.

\begin{lem} \label{lem:EM}
(E) follows from (M).
\end{lem}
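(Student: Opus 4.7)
The strategy is to prove the contrapositive: assuming (E) fails, I will construct a nonzero element in the kernel of $I + \tilde R(x)$ in $B$, contradicting (M). Specifically, suppose $\gamma(\la) \not\equiv 0$ is an entire row-vector function satisfying $\gamma(\la) = O(\exp(|\tau|\pi))$ and $\gamma(\la_{nq})\al_{nq} = 0$ for all admissible $n, q$. I shall exhibit a nonzero $\psi(x) \in B$ with $\psi(x)(I + \tilde R(x)) = 0$, contradicting the existence of the bounded inverse guaranteed by Theorem~\ref{thm:main}.

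For the construction, fix a column $e \in \mathbb{C}^m$ and set
\[
\psi_k(x)(\rho_{klj}) := \tilde \vv(x, \la_{klj}) \, e \, \gamma(\la_{klj}), \quad k \ge 0, \ l = \overline{1, m}, \ j = 0, 1,
\]
an $m \times m$ rank-one matrix whose rows are all scalar multiples of $\gamma(\la_{klj})$. The routine check that $\psi(x) \in B$ uses Schwarz's lemma on annuli of width $O(1)$ around $n^2$ to convert the growth of $\gamma$ into the uniform bound $\|\gamma(\la_{klj})\| \le C$ and Lipschitz control $\|\gamma(\la_{klj}) - \gamma(\la_{krs})\| \le C|\rho_{klj} - \rho_{krs}|$; combining this with Lemma~\ref{lem:Schwarz} applied to $\tilde \vv$ yields $\|\psi(x)\|_B < \infty$.

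The heart of the proof is the verification of $\psi(x)(I + \tilde R(x)) = 0$, i.e., for each $n, q, i$,
\[
\psi_n(x)(\rho_{nqi}) + \sum_{k, l}\bigl(\psi_k(x)(\rho_{kl0})\al'_{kl0} \tilde D(x, \la_{nqi}, \la_{kl0}) - \psi_k(x)(\rho_{kl1})\al'_{kl1} \tilde D(x, \la_{nqi}, \la_{kl1})\bigr) = 0.
\]
The hypothesis $\gamma(\la_{kl})\al_{kl} = 0$ together with $\al'_{kl0} \in \{\al_{kl}, 0_m\}$ makes $\psi_k(x)(\rho_{kl0})\al'_{kl0} = \tilde \vv(x, \la_{kl}) e \cdot \gamma(\la_{kl})\al_{kl} = 0$, so all $j = 0$ summands vanish, and the equation reduces to the interpolation-type identity
\[
\tilde \vv(x, \la_{nqi}) e \gamma(\la_{nqi}) = \sum_{k, l} \tilde \vv(x, \tilde \la_{kl}) e \gamma(\tilde \la_{kl}) \tilde \al_{kl} \tilde D(x, \la_{nqi}, \tilde \la_{kl}).
\]
This identity is the main technical obstacle; I would obtain it by a contour-integration argument parallel to Lemma~\ref{lem:contour}. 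The entire matrix function $F(x, \la) := \tilde \vv(x, \la) e \gamma(\la)$ has exponential type $\le 2\pi$ in $\rho$; integrating $F(x, \la) \tilde M(\la) \tilde D(x, \la_{nqi}, \la)$ over an expanding circle $|\rho| = N + \frac{1}{2}$, the residues at $\tilde \la_{kl}$ (using $\tilde \al_{kl} = \Res_{\la = \tilde \la_{kl}} \tilde M(\la)$) generate the right-hand side, the residue at $\la_{nqi}$ generates the left-hand side, and the contour integral tends to zero thanks to the decay $\|\tilde M(\la)\| = O(|\rho|^{-1})$ in $G_\de$ combined with the type-$\pi$ bound on $\gamma$ and the type-$x$ bound on $\tilde \vv(x, \cdot)$.

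For nontriviality, evaluate at $x = 0$: since $\tilde \vv(0, \la) = I_m$, $\psi_k(0)(\rho_{klj}) = e \gamma(\la_{klj})$. If $\psi \equiv 0$, then $\gamma(\la_{klj}) = 0$ for all $k, l, j$. The set $\{\rho_{klj}\}$ has asymptotic density $2m \ge 2$ on the real axis (by \eqref{asymptrho} applied to both $L$ and $\tilde L$), which exceeds the critical density $1$ for entire functions of exponential type $\pi$ in $\rho$; applied componentwise to $\gamma$, this forces $\gamma \equiv 0$ by a standard Paley-Wiener argument, contradicting our initial assumption. The main technical hurdle is the contour-integral verification of the interpolation identity above; a cleaner alternative route might exploit the dual formulation in $B^*$ (via Lemma~\ref{lem:main*}) or the completeness condition (C) to be introduced later in this section.
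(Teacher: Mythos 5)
Your overall plan (contrapositive: a nonzero $\gamma$ violating (E) yields a nonzero element in the kernel of $I+\tilde R(x)$) is a legitimate way to use (M), and your observation that $\psi_k(x)(\rho_{kl0})\al'_{kl0}=0$ kills the $j=0$ terms is correct. The fatal problem is the ``interpolation-type identity''
\[
\tilde\vv(x,\la_{nqi})\,e\,\gamma(\la_{nqi})=\sum_{k,l}\tilde\vv(x,\tilde\la_{kl})\,e\,\gamma(\tilde\la_{kl})\,\tilde\al'_{kl}\,\tilde D(x,\la_{nqi},\tilde\la_{kl}),
\]
which is not a ``technical obstacle'' but the entire content of the lemma: since $\tilde D(0,\la,\mu)=0_m$ and $\tilde\vv(0,\la)=I_m$, evaluating it at $x=0$ gives $e\,\gamma(\la_{nqi})=0$, i.e.\ precisely the conclusion $\gamma(\la_{nqi})=0$ you are trying to establish. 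Such an identity cannot be derived from (A), (R) and the hypotheses of (E) alone, because (E) genuinely fails for some data satisfying those hypotheses --- that is exactly why (M) must be imposed by sufficiency. Your contour argument for it also does not go through: $\tilde D(x,\la_{nqi},\la)$ is entire in $\la$, so $F(x,\la)\tilde M(\la)\tilde D(x,\la_{nqi},\la)$ has no pole at $\la_{nq0}$ (generically not a pole of $\tilde M$), and at $\la_{nq1}=\tilde\la_{nq}$ the residue produces one of the right-hand terms, not the left-hand side; moreover on $|\rho|=N+\tfrac12$ the integrand is only $O(\exp(|\tau|(2x+\pi))|\rho|^{-2})$, so after $d\la=2\rho\,d\rho$ the uncancelled exponential factor prevents the contour integral from vanishing. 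The cancellation in Lemma~\ref{lem:contour} comes from the difference of two Weyl matrices, which is unavailable here since no problem $L$ behind the data $\Lambda$ is assumed to exist.

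The actual proof needs two ingredients that your argument never uses: (i) Lemma~\ref{lem:E}, which guarantees that (E) \emph{does} hold for the genuine spectral data $\tilde\Lambda$ of the model problem, and (ii) the biorthogonality relations of Lemma~\ref{lem:sym} in the form \eqref{sym}. The paper's route is direct rather than contrapositive: it sets $\tilde\ga(\la)=\ga(\la)+\sum_{k,l}\bigl[\ga(\la_{kl0})\al'_{kl0}\tilde D(\pi,\la,\la_{kl0})-\ga(\la_{kl1})\al'_{kl1}\tilde D(\pi,\la,\la_{kl1})\bigr]$ (at $x=\pi$, not at arbitrary $x$), uses \eqref{sym} to check $\tilde\ga(\tilde\la_{nq})\tilde\al_{nq}=0$, invokes Lemma~\ref{lem:E} for $\tilde\Lambda$ to conclude $\tilde\ga\equiv 0$, and only then reads off that the row $[\ga(\la_{nqi})]$ solves the homogeneous main equation at $x=\pi$, so (M) forces $\ga(\la_{nqi})=0$ and hence, via the same formula, $\ga\equiv 0$. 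Your rank-one element with the extra factor $\tilde\vv(x,\cdot)$ is exactly what renders your identity unprovable; the closing density argument for nontriviality is fine in spirit but moot given this gap.
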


\begin{proof}
Let $\ga(\la)$ be a row vector, 
entire in $\la$ and satisfying the relations 
$\ga(\la) = O(\exp(|\tau|\pi))$, $\ga(\la_{nq0}) \al_{nq0} = 0$ for all $n \ge 0$, $q = \overline{1, m}$.

Schwarz's lemma together with asymptotics \eqref{asymptrho} yields
\begin{equation} \label{estgamma1}
\| \ga(\la_{nqi}) - \ga(\la_{nlj}) \| \le C |\rho_{nqi} - \rho_{nlj}|, \quad
n \ge 0, \: q, l = \overline{1, m}, \: i, j = 0, 1.
\end{equation} 

Consider the function
\begin{equation} \label{deftgamma}
    \tilde \ga(\la) := \ga(\la) + \sum_{k = 0}^{\iy} \sum_{l = 1}^{\iy} \Bigl[
    \ga(\la_{kl0}) \al'_{kl0} \tilde D(\pi, \la, \la_{kl0}) - 
    \ga(\la_{kl1}) \al'_{kl1} \tilde D(\pi, \la, \la_{kl1})
    \Bigr]
\end{equation}
In order to prove the convergence of the series in \eqref{deftgamma}, we apply to following formal transformation
\begin{multline} \label{transformgamma}
\tilde \ga(\la) = \ga(\la) + \sum_{k = 0}^{\iy} \Biggl[
\sum_{l = 1}^m (\ga(\la_{kl0}) - \ga(\la_{kl1})) \al'_{kl0} \tilde D(\pi, \la, \la_{kl0})  +
\sum_{l = 1}^m \ga(\la_{kl1}) \al'_{kl0} (\tilde D(\pi, \la, \la_{kl0}) \\ - \tilde D(\pi, \la, \la_{kl1})) +
\sum_{s = 1}^p \sum_{l \in J_s} (\ga(\la_{kl1}) - \ga(\la_{k m_s 1}))(\al'_{kl0} - \al'_{kl1}) \tilde D(\pi, \la, \la_{kl1})  +
\sum_{s = 1}^p \sum_{l \in J_s} \ga(\la_{k m_s 1}) \\ \times (\al'_{kl0} - \al'_{kl1})(\tilde D(\pi, \la, \la_{kl1}) - \tilde D(\pi, \la, \la_{k m_s 1}))  +
\sum_{s = 1}^p (\ga(\la_{k m_s 1}) - \ga(\la_{k11}) (\al_k^{(s)} - \tilde \al_k^{(s)}) \tilde D(\pi, \la, \la_{k m_s 1}) \\ +
\sum_{s = 1}^p \ga(\la_{k11}) (\al_k^{(s)} - \tilde \al_k^{(s)}) (\tilde D(\pi, \la, \la_{k m_s 1}) - \tilde D(\pi, \la, \la_{k11})) + 
\ga(\la_{k11}) (\al_k - \tilde \al_k) \tilde D(\pi, \la, \la_{k11}) \Biggr].
\end{multline}
By virtue of \eqref{defxi}, \eqref{estgamma1} and the estimates
\begin{multline*}
    \| \tilde D(\pi, \la, \la_{klj}) \| \le C \exp(|\tau|\pi), \quad
    \| \tilde D(\pi, \la, \la_{klj}) - \tilde D(x, \la, \la_{kqi} \| \le C|\rho_{klj} - \rho_{kqi}|\exp(|\tau|\pi), \\
    \quad \mbox{Re} \, \rho \geq 0, \: k \ge 0, \: l, q = \overline{1, m}, \: i, j = 0, 1,
\end{multline*}
we get
$$
   \|\tilde \gamma(\la) \| \le \| \ga(\la) \| + C \exp(|\tau|\pi) \sum_{k = 0}^{\iy} \xi_k.
$$
Taking \eqref{defOmega} into account, we conclude that the series in \eqref{deftgamma} converges
to an entire function, satisfying the estimate $\tilde \ga(\la) = O(\exp(|\tau|\pi))$.

Substitute $\la = \la_{nq1}$ into \eqref{deftgamma} and multiply the result
by $\al_{nq1}$. Definition \eqref{defD} and Lemma~\ref{lem:sym} yield
\begin{equation} \label{sym}
\al'_{kl1} \tilde D(\pi, \la_{nq1}, \la_{kl1}) \al_{nq1} = \begin{cases}
                                                         \al_{nq1}, \quad \text{if} \: \al_{nq1} = \al_{kl1} \: 
                                                         \text{and} \: \al'_{kl1} \ne 0_m, \\
                                                         0_m, \quad \text{otherwise.}
                                                      \end{cases}
\end{equation}
Therefore, we obtain $\tilde \ga(\la_{nq1}) \al_{nq1} = 0$ for all $n \ge 0$, $q = \overline{1, m}$.
Thus, we have got the entire function $\tilde \ga(\la)$, satisfying the presupposition of (E) for 
{\it the spectral data $\tilde \Lambda$ of the model problem} $\tilde L$. But (E) holds for $\tilde \Lambda$
by~Lemma~\ref{lem:E}. Hence $\tilde \ga(\la) \equiv 0$, and by \eqref{deftgamma}
$$
    \ga(\la_{nqi}) + \sum_{k = 0}^{\infty} \sum_{l = 1}^m \left( \ga(\la_{kl0}) \al'_{kl0}
    \tilde D(\pi, \la_{nqi}, \la_{kl0}) - \ga(\la_{kl1}) \al'_{kl1} \tilde D(\pi, \la_{nqi}, \la_{kl1}) \right) = 0.
$$
We see that the vector $\psi = [\ga(\la_{nqi})] \in B$ satisfy the homogeneous main equation
$\psi (I + \tilde R(\pi)) = 0$. It follows from (M), that $\ga(\la_{nqi}) = 0$ for all $n \ge 0$, $q = \overline{1, m}$,
$i = 0, 1$. Using \eqref{deftgamma} once again, we arrive at $\ga(\la) \equiv 0$. Thus, we have proved (E).

\end{proof}

Introduce the subspaces $\mathcal{E}_{nq} = \mbox{Ran} \, \al'_{nq} = \{\mathcal{E} = \al'_{nq} h, \, h \in \mathbb{C}^m \}$.
Note that we intendently use $\al'_{nq}$ instead of $\al_{nq}$ in this definition, in order not to include
the same subspaces multiple times. Let $\left\{ \mathcal{E}_{nq}^{(i)} \right\}_{i = 1}^{m_{nq}}$ be an ortonormal basis of 
$\mathcal{E}_{nq}$. The number $m_{nq}$ coincide with the multiplicity of the corresponding eigenvalue $\la_{nq}$, 
if $\al'_{nq} \ne 0_m$, and $\mathcal{E}_{nq} = \varnothing$ otherwise.

\begin{lem} \label{lem:complete}
Let $\vv(x, \la)$ be an arbitrary matrix-function, continuous with respect to $x \in [0, \pi]$ and 
entire in $\la$, satisfying the asymptotic relation \eqref{asymptphi}.
Suppose that (E) holds. Then the system 
\begin{equation} \label{vvsys}
    \vv(x, \la_{nq}) \mathcal{E}_{nq}^{(i)}, \quad n \ge 0, \: q = \overline{1, m}, \: i = \overline{1, m_{nq}},
\end{equation}
is complete in $L_2((0, \pi), \mathbb{C}^{m})$.
\end{lem}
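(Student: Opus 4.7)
I would use a standard orthogonality argument. Suppose $f \in L_2((0, \pi), \mathbb{C}^m)$ is orthogonal to every element of the system \eqref{vvsys}; the goal is to show $f \equiv 0$. Introduce the row-vector entire function
$$\ga(\la) := \int_0^{\pi} f^{\dagger}(x)\,\vv(x, \la)\,dx.$$
From the leading term in \eqref{asymptphi} one obtains the pointwise bound $\|\vv(x, \la)\| \le C \exp(|\tau|x)$ uniformly on $[0, \pi]$, so by Cauchy--Schwarz on $f \in L_2 \subset L_1$ one has $\ga(\la) = O(\exp(|\tau|\pi))$.

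Orthogonality of $f$ to each $\vv(x, \la_{nq})\,\mathcal{E}_{nq}^{(i)}$ means $\ga(\la_{nq})$ annihilates every basis vector $\mathcal{E}_{nq}^{(i)}$; since these span $\mathcal{E}_{nq} = \mbox{Ran}\,\al'_{nq}$, one has $\ga(\la_{nq})\,\al'_{nq} = 0$. The $\mbox{Sp}$-convention that $\la_{nq} = \la_{kl}$ implies $\al_{nq} = \al_{kl}$ now promotes this to $\ga(\la_{nq})\,\al_{nq} = 0$ for every $n \ge 0$, $q = \overline{1, m}$: whenever $\al'_{nq} = 0$, the value $\la_{nq}$ coincides with that of some representative index whose $\al'$ equals $\al_{nq}$ and is already annihilated by $\ga(\la_{nq})$. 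The hypotheses of (E) being fulfilled, we conclude $\ga(\la) \equiv 0$, i.e.\ $\int_0^{\pi} f^{\dagger}(x)\,\vv(x, \la)\,dx \equiv 0$.

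The remaining step is to extract $f \equiv 0$ from this identity. For this I would use the Agranovich--Marchenko transformation-operator representation
$$\vv(x, \la) = \cos \rho x \cdot I_m + \int_0^x K(x, t)\,\cos \rho t\,dt,$$
which is classical for matrix Sturm--Liouville solutions with $L_2$ potential (see \cite{AM60}) and whose form is precisely what underlies asymptotic \eqref{asymptphi}. Substituting and exchanging the order of integration yields $\int_0^{\pi} g^{\dagger}(t)\,\cos \rho t\,dt \equiv 0$ with $g^{\dagger}(t) := f^{\dagger}(t) + \int_t^{\pi} f^{\dagger}(x)\,K(x, t)\,dx$; completeness of $\{\cos k t\}_{k \ge 0}$ in $L_2(0, \pi)$ forces $g \equiv 0$, after which the homogeneous Volterra equation for $f^{\dagger}$ has only the trivial solution, giving $f \equiv 0$. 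The principal obstacle is precisely this last step: the direct asymptotic bound $\int_0^{\pi} f^{\dagger}(x)\cos \rho x\,dx = O(\exp(|\tau|\pi)/|\rho|)$ that drops out of $\ga \equiv 0$ is not, by itself, sufficient to conclude that this cosine transform vanishes identically (for instance, $\sin \rho \pi / \rho$ satisfies the same bound). It is the transformation-operator structure of $\vv$ that closes the gap and renders the argument rigorous.
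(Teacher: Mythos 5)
Your proof follows the same route as the paper's: take $f$ orthogonal to the system, form $\ga(\la)=\int_0^\pi f^{\dagger}(x)\vv(x,\la)\,dx$, verify the hypotheses of (E), and conclude $\ga\equiv 0$; you are in fact more careful than the paper on the passage from $\ga(\la_{nq})\al'_{nq}=0$ (which is what orthogonality to the bases of $\mathcal{E}_{nq}=\mbox{Ran}\,\al'_{nq}$ literally gives) to $\ga(\la_{nq})\al_{nq}=0$ via the $\mbox{Sp}$-convention, a point the paper's proof does not mention. The one place you genuinely diverge is the final implication $\ga\equiv 0\Rightarrow f=0$: the paper simply asserts ``Therefore $f(x)=0$,'' whereas you justify it through the Agranovich--Marchenko transformation operator and the resulting homogeneous Volterra equation. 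Your remark that the bare bound $\int_0^\pi f^{\dagger}(x)\cos\rho x\,dx=O(\exp(|\tau|\pi)/|\rho|)$ extracted from $\ga\equiv0$ does not by itself kill the cosine transform (witness $\sin\rho\pi/\rho$) is correct, and it points at a real looseness in the lemma as literally stated, since its hypothesis admits an arbitrary continuous-in-$x$, entire-in-$\la$ matrix function with asymptotics \eqref{asymptphi}, for which no transformation-operator representation need exist. In every place the paper actually invokes the lemma, $\vv$ is either $\cos\rho x\cdot I_m$ (for condition (C)) or a genuine solution of an equation of the form \eqref{eqv} with $L_2$ potential and the initial conditions of $\vv$, so your argument closes the step in all cases that matter.
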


\begin{proof}
Consider a vector-function $f(x) \in L_2((0, \pi), \mathbb{C}^m)$, such that
$$
    \int_0^{\pi} f^{\dagger}(x) \vv(x, \la_{nq}) \mathcal{E}_{nq}^{(i)} \, dx = 0
$$
for all $n \ge 0$, $q = \overline{1, m}$, $i = \overline{1, m_{nq}}$.
It is easy to check that the function
$$
    \ga(\la) = \int_0^{\pi} f^{\dagger}(x) \vv(x, \la) \, dx
$$ 
satisfy all the properties in (E) and, consequently, equals zero. 
Therefore $f(x) = 0$ and the system \eqref{vvsys} is complete.
\end{proof}

\begin{lem} \label{lem:basis}
Let $\vv(x, \la)$ satisfy conditions of Lemma~\ref{lem:complete}, and the system~\eqref{vvsys} is complete.
Then the system \eqref{vvsys} is a basis in $L_2((0, \pi), \mathbb{C}^m)$.

\end{lem}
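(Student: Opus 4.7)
The plan is to apply Bari's theorem: a complete, minimal system in a Hilbert space that is quadratically close to a Riesz basis is itself a Riesz basis, hence a Schauder basis. Completeness is provided by Lemma~\ref{lem:complete}, while minimality of $\{\vv(x, \la_{nq}) \mathcal{E}_{nq}^{(i)}\}$ follows from the biorthogonality relations of Lemma~\ref{lem:sym} together with the corresponding $\vv^*$-system. It therefore remains to exhibit a Riesz basis to which~\eqref{vvsys} is quadratically close.

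First I would specialize the reference problem to $\tilde L$ with $\tilde Q(x) = \frac{2}{\pi}\om$, $\tilde h = \tilde H = 0_m$. Since $\om$ is diagonal, equation~\eqref{eqv} for $\tilde L$ decouples into $m$ independent scalar problems, so $\tilde \la_{nq} = n^2 + \frac{2}{\pi}\om_q$ and $\tilde{\mathcal{E}}_{nq} \subseteq \mathrm{span}\{e_j : \om_j = \om_q\}$, and the system $\tilde \vv(x, \tilde \la_{nq})\tilde{\mathcal{E}}_{nq}^{(i)}$ consists (after orthonormalization) of vectors proportional to $\cos(nx) e_{j}$. This is, up to the $n = 0$ normalization, the standard orthonormal cosine basis of $L_2((0, \pi), \mathbb{C}^m)$, and in particular a Riesz basis.

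Next I would prove the quadratic closeness. Using the asymptotic expansion~\eqref{asymptphi} for $\vv(x, \la)$ together with~\eqref{asymptrho} and Lemma~\ref{lem:Schwarz}, and combining these with the asymptotics~\eqref{asymptalpha1}--\eqref{asymptalpha3} that control the position of the subspaces $\mathcal{E}_{nq}$ inside $\mathrm{Ran}\, I^{(s)}$, I would choose a pairing $\mathcal{E}_{nq}^{(i)} \leftrightarrow \tilde{\mathcal{E}}_{nq}^{(i)}$ and establish the bound
\[
 \sum_{n, q, i} \bigl\|\vv(x, \la_{nq})\mathcal{E}_{nq}^{(i)} - \tilde \vv(x, \tilde \la_{nq}) \tilde{\mathcal{E}}_{nq}^{(i)} \bigr\|_{L_2((0,\pi),\mathbb{C}^m)}^{2} \le C\sum_{n = 0}^{\infty}\xi_n^{2},
\]
with $\xi_n$ as in~\eqref{defxi}, the right-hand side being finite by~\eqref{defOmega}. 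Bari's theorem then delivers the basis property.

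The delicate step will be the pairing within each asymptotic cluster $J_s = \{q : \om_q = \om_{m_s}\}$: only the grouped sum $\al_n^{(s)} = \sum_{q \in J_s}\al'_{nq}$ has the sharp leading asymptotics $\frac{2}{\pi} I^{(s)}$, whereas individual $\al'_{nq}$ for $q \in J_s$ may rotate freely inside $\mathrm{Ran}\, I^{(s)}$. One must therefore treat $\bigoplus_{q \in J_s}\mathcal{E}_{nq}$ as a single block, choose within it a basis whose deviation from a fixed basis of $\mathrm{Ran}\, I^{(s)}$ is controlled in the $l_2$-sense by~$\xi_n$, and absorb all resulting cross-terms into this remainder.
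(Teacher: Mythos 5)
Your overall strategy (completeness plus quadratic closeness to a Riesz basis, via a Bari-type theorem) is the same as the paper's, but your choice of reference system creates a gap that you correctly flag as ``the delicate step'' and then do not close --- and in the form you state it, it cannot be closed. You propose to pair $\vv(x, \la_{nq})\mathcal{E}_{nq}^{(i)}$ with the model system $\tilde \vv(x, \tilde \la_{nq})\tilde{\mathcal{E}}_{nq}^{(i)}$ and to bound the sum of squared deviations by $C\sum_n \xi_n^2$. But the available asymptotics control only the cluster sums $\al_n^{(s)} = \sum_{q \in J_s}\al'_{nq}$ (formula \eqref{asymptalpha1}) and the component of $\al_{nq}$ outside $\mathrm{Ran}\, I^{(s)}$ (formula \eqref{asymptalpha2}); the individual subspaces $\mathcal{E}_{nq} = \mathrm{Ran}\,\al'_{nq}$ for $q \in J_s$ may sit anywhere inside $\mathrm{Ran}\, I^{(s)}$, uniformly bounded away from every fixed subspace for all $n$. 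Hence no pairing with the $\tilde{\mathcal{E}}_{nq}^{(i)}$ (nor with any fixed vectors) makes $\|\vv(x,\la_{nq})\mathcal{E}_{nq}^{(i)} - \tilde\vv(x,\tilde\la_{nq})\tilde{\mathcal{E}}_{nq}^{(i)}\|$ square-summable, and $\xi_n$ simply does not control this quantity. Your proposed repair --- re-choosing a basis inside the block $\bigoplus_{q\in J_s}\mathcal{E}_{nq}$ close to a fixed basis of $\mathrm{Ran}\, I^{(s)}$ --- changes the system: such a basis will in general mix vectors from different $\mathcal{E}_{nq}$, hence mix the functions $\vv(x,\la_{nq})$ with different $q$, so the result is no longer a re-enumeration of \eqref{vvsys}.

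The paper's proof avoids the problem entirely by comparing \eqref{vvsys} with the system $\cos nx\,\mathcal{E}_{nq}^{(i)}$, i.e.\ keeping the data's own subspaces and replacing only the $x$-dependence; the closeness $\cos nx\, I_m - \vv(x,\la_{nq}) = O(n^{-1})$ is then immediate from \eqref{asymptrho} and \eqref{asymptphi} and involves no comparison of subspaces at all. The price is that one must check separately that $\{\cos nx\,\mathcal{E}_{nq}^{(i)}\}$ is itself a Riesz basis; this is exactly where condition (R) enters (for fixed $n$ the total number of vectors $\mathcal{E}_{nq}^{(i)}$ is $m$) together with \eqref{asymptalpha3} (which gives $\det \al_n \ne 0$ for large $n$, so no vector of $\mathbb{C}^m$ is orthogonal to all the $\mathcal{E}_{nq}^{(i)}$). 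If you reroute your argument through this reference system, the rest of your sketch --- completeness from Lemma~\ref{lem:complete} and the Bari-type conclusion --- goes through; the appeal to minimality is then not even needed, since quadratic closeness to a Riesz basis realizes \eqref{vvsys} as the image of that basis under $I + K$ with $K$ Hilbert--Schmidt, and completeness forces $I + K$ to be invertible.
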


\begin{proof}
The basis property of the system~\eqref{vvsys} follows from its completeness and $l_2$-closeness to the
ortonormal basis
\begin{equation} \label{cossys}
    \cos nx \mathcal{E}_{nq}^{(i)}, \quad n \ge 0, \: q = \overline{1, m}, \: i = \overline{1, m_{nq}},
\end{equation}
According to asymptotics \eqref{asymptrho} and \eqref{asymptphi}, 
$$
    \cos nx I_m - \vv(x, \la_{nq}) = O(n^{-1}), \quad \left\{ \| (\cos nx I_m - \vv(x, \la_{nq})) \mathcal{E}_{nq}^{(i)} \| \right\} \in l_2.
$$
In order to prove the basis property for the system~\eqref{cossys}, it is sufficient to show, 
that the system $\left\{ \mathcal{E}_{nq}^{(i)} \right \}$ is a basis in $\mathbb{C}^m$
for a fixed $n$, for all sufficiently large values of $n \ge N$.
If the elements of \eqref{cossys} for $n < N$ are linearly dependent, one can change them to
$\cos nx e_q$, $q = \overline{1, m}$, where $\{ e^q \}_{q = 1}^m$ is the standard coordinate basis.
Since (R) holds, i.e. ranks of the weight matrices equal multiplicities of the corresponding eigenvalues,
the total number of the vectors $\left\{ \mathcal{E}_{nq}^{(i)} \right \}$ for a fixed $n$ is $m$.
Suppose there exists a vector $h$ ortogonal to all $ \mathcal{E}_{nq}^{(i)}$. Then
$h^{\dagger}\al_{nq} = 0$ for all $q = \overline{1, m}$, and $h^{\dagger} \al_n = 0$.
But in view of \eqref{asymptalpha3}, $\det \al_n \ne 0$ for sufficiently large values of $n$.
Thus, the considered system of vectors is a basis. 
\end{proof}

Similar facts can be obtained for the problem $L^*$. 
Let $\mathcal{E}^*_{nq} = \{\mathcal{E}^* = h \al'_{nq}, \, h \in \mathbb{C}^{m, T} \}$.
Denote by $\left\{ \mathcal{E}^{*,(i)}_{nq} \right\}_{i = 1}^{m_{nq}}$ an ortonormal basis of 
$\mathcal{E}^*_{nq}$, consisting of row vectors. The following lemma summarizes results, similar to Lemmas~\ref{lem:EM}-\ref{lem:basis}.
In fact, the solvability the main equation \eqref{main*} for $L^*$, instead of \eqref{main}, can be used to prove the lemma. 

\begin{lem} \label{lem:basis*}
Assume that $\vv(x, \la)$ is an arbitrary matrix-function, continuous with respect to $x \in [0, \pi]$ and 
entire in $\la$, satisfying the asymptotic relation \eqref{asymptphi},
and (M) holds. Then the system 
$$
    \mathcal{E}_{nq}^{(i), *} \vv(x, \la_{nq}), \quad n \ge 0, \: q = \overline{1, m}, \: i = \overline{1, m_{nq}},
$$
is a basis in $L_2((0, \pi), \mathbb{C}^{m, T})$.
\end{lem}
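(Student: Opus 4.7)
The plan is to mirror the three-step chain of Lemmas \ref{lem:EM}--\ref{lem:basis}, but with row vectors on the left, column vectors on the right, and the adjoint main equation \eqref{main*} in place of \eqref{main}. All the analytic content has already been developed; only the placement of factors needs to be transposed.

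First I would establish a dual version of (E): if $\gamma^*(\la)$ is a \emph{column} vector, entire in $\la$, with $\gamma^*(\la) = O(\exp(|\tau|\pi))$ and $\alpha_{nq} \gamma^*(\la_{nq}) = 0$ for all $n,q$, then $\gamma^*(\la) \equiv 0$. The argument copies Lemma \ref{lem:EM} with sides swapped: construct
$$
\tilde\gamma^*(\la) := \gamma^*(\la) + \sum_{k=0}^{\infty}\sum_{l=1}^{m}\bigl[ \tilde D(\pi,\la_{kl0},\la)\alpha'_{kl0}\gamma^*(\la_{kl0}) - \tilde D(\pi,\la_{kl1},\la)\alpha'_{kl1}\gamma^*(\la_{kl1}) \bigr],
$$
using \eqref{mainphi*} instead of \eqref{mainphila}. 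Convergence to an entire function of exponential type $\pi$ follows from the same grouping used in \eqref{transformgamma}, combined with the Schwarz-type estimate $\|\gamma^*(\la_{nqi}) - \gamma^*(\la_{nlj})\| \le C|\rho_{nqi}-\rho_{nlj}|$. The transposed form of the symmetry identity \eqref{sym} (with the roles of $\al_{nq1}$ and $\al'_{kl1}$ exchanged, proved the same way from Lemma \ref{lem:sym}) gives $\al_{nq1}\tilde\gamma^*(\la_{nq1})=0$, so $\tilde\gamma^*$ satisfies the hypotheses of (E) for the model problem $\tilde L^*$; since $\tilde L \in A_{1,2}(\om)$, the analogue of Lemma \ref{lem:E} for $\tilde L^*$ yields $\tilde\gamma^*\equiv 0$. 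Then the values $[\gamma^*(\la_{nqi})]$ solve the homogeneous dual main equation $(I+\tilde R^*(\pi))\gamma^*=0$ in $B^*$, which by Lemma \ref{lem:main*} is uniquely solvable (this is where (M) enters), forcing $\gamma^*(\la_{nqi})=0$ and hence $\gamma^*\equiv 0$.

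Completeness of $\{\mathcal{E}^{*,(i)}_{nq}\vv(x,\la_{nq})\}$ in $L_2((0,\pi),\mathbb{C}^{m,T})$ then follows exactly as in Lemma \ref{lem:complete}: given a column vector $f\in L_2((0,\pi),\mathbb{C}^m)$ orthogonal to every $\mathcal{E}^{*,(i)}_{nq}\vv(x,\la_{nq})$, set $\gamma^*(\la):=\int_0^\pi \vv(x,\la) f(x)\,dx$. Asymptotics \eqref{asymptphi} give the exponential-type bound, and orthogonality means $\mathcal{E}^{*,(i)}_{nq}\gamma^*(\la_{nq})=0$ for $i=\overline{1,m_{nq}}$; since the rows $\{\mathcal{E}^{*,(i)}_{nq}\}_i$ span the row space of $\alpha'_{nq}$, this is equivalent to $\alpha_{nq}\gamma^*(\la_{nq})=0$. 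The dual (E) from the previous paragraph forces $\gamma^*\equiv 0$, and so $f=0$.

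Finally, for the basis property I would argue as in Lemma \ref{lem:basis}: the system $\{\cos nx\cdot \mathcal{E}^{*,(i)}_{nq}\}$ is an orthonormal basis of $L_2((0,\pi),\mathbb{C}^{m,T})$, because (R) together with $\det\al_n\ne 0$ for large $n$ (from \eqref{asymptalpha3}) ensures $\{\mathcal{E}^{*,(i)}_{nq}\}_{q,i}$ is a basis of $\mathbb{C}^{m,T}$ for each fixed large $n$ (with obvious replacement for finitely many small $n$). The asymptotics \eqref{asymptrho}, \eqref{asymptphi} yield $\|(\vv(x,\la_{nq})-\cos nx\,I_m)\mathcal{E}^{*,(i)}_{nq}\|\in\ell_2$, so our system is $\ell_2$-close to the cosine basis, and $\ell_2$-closeness of a complete system to a Riesz basis is a basis.

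The main obstacle is really the bookkeeping in the first step: one must verify that the symmetry identity \eqref{sym} still yields the correct annihilation $\al_{nq1}\tilde\gamma^*(\la_{nq1})=0$ after transposing sides, and that every application of Lemma \ref{lem:Schwarz} survives in its row-vector form. These are routine transpositions, but they are the only place where the adjoint structure of $L^*$ genuinely intervenes; everything downstream is formally identical to the column-vector case.
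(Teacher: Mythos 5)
Your proposal is correct and follows exactly the route the paper intends: the paper proves this lemma only by remarking that one repeats Lemmas~\ref{lem:EM}--\ref{lem:basis} for the adjoint problem $L^*$, replacing the main equation \eqref{main} by \eqref{main*} and invoking Lemma~\ref{lem:main*} to transfer condition (M). Your transposed chain (dual (E) via the symmetry identity and the homogeneous equation $(I+\tilde R^*(\pi))\gamma^*=0$, then completeness as in Lemma~\ref{lem:complete}, then the basis property via $\ell_2$-closeness to the cosine system) is precisely that argument, spelled out.
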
 

By virtue of Lemma~\ref{lem:complete}, the following condition: 

{\it
(C) The system of functions
$$
    \cos \rho_{nq}x \mathcal{E}_{nq}^{(i)}, \quad n \ge 0, \: q = \overline{1, m}, \: i = \overline{1, m_{nq}},
$$
is complete in $L_2((0, \pi), \mathbb{C}^m)$.
}

follows from (E).
The converse is not true.

Indeed, there is the one-to-one correspondence between vector-functions 
$f(x) \in L_2((0, \pi), \mathbb{C}^m)$ and row vectors
$\ga(\la) = \int_0^{\pi} f^{\dagger}(x) \cos \rho x \, dx$ of an even Paley-Wiener class $\mbox{PW}$, 
defined by the following conditions:
\begin{enumerate}
\item $\ga(\la)$ is entire,
\item $\ga(\la) = O(\exp(|\tau|\pi))$,
\item $\int_0^{\infty}|\ga(\rho^2)|^2 \, d\rho < \infty$.
\end{enumerate}
Therefore, condition (C) is equivalent to the following condition: 

{\it (PW) For any row vector $\gamma(\la) \in \mbox{PW}$, if $\gamma(\la_{nq})\alpha_{nq} = 0$ for all 
$n \geq 0$, $q = \overline{1, m}$, then $\gamma(\la) \equiv 0$.
}

\bigskip

{\large \bf 6. Proof of Theorem 1}\\

The necessity in Theorem~\ref{thm:1} is contained in Lemmas~\ref{lem:asymptrho}, \ref{lem:asymptalpha1},
\ref{lem:asymptalpha2}, \ref{lem:ranks} and Theorem~\ref{thm:main}.

Turn to the proof of the sufficiency.
Let data $\{\la_{nq}, \alpha_{nq}\}_{n \geq 0, q =
\overline{1, m}} \in \mbox{Sp}$ be given. Choose $\tilde L \in A_{1,2}(\om)$ and construct
$\tilde \psi(x)$, $\tilde R(x)$. Assume that the conditions of Theorem~\ref{thm:1} hold.
Let $\psi(x) = \{\psi_n(x)\}_{n \ge 0} \in B$ be the unique solution of the main equation~\eqref{main}.
The proofs of Lemmas~\ref{lem:estpsi}-\ref{lem:lphi} are analogous
to ones described in \cite[Sec.~1.6.2]{FY01}.

\begin{lem} \label{lem:estpsi}
For $n \geq 0$, the functions $\psi_n(x)$ are continuously differentiable with respect to $x$ on $[0, \pi]$, 
and the following relations hold
$$
\begin{array}{c}
   \| \psi^{(\nu)}_{n}(x)\|_{B(G_n)} \leq C(n + 1)^{\nu}, \quad \nu = 0, 1, \quad x \in [0, \pi], \\
   \|\psi_{n}(x) - \tilde  \psi_{n}(x)\|_{B(G_n)} \leq C \Omega \eta_n, \quad
   \| \psi'_{n}(x) - \tilde \psi'_{n}(x) \|_{B(G_n)} \leq C \Omega, \quad x \in [0, \pi].
\end{array}
$$
where
$$
    \eta_n := \left(\sum_{k = 0}^{\iy} \frac{1}{(k + 1)^2(|n - k| + 1)^2} \right)^{1/2}.
$$
\end{lem}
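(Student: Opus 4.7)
The plan is to derive the stated estimates from the representation of $\psi(x)$ supplied by the main equation, combined with the operator bound \eqref{estR} established in Lemma~\ref{lem:Rbound}. Starting from \eqref{main} and the identity $(I - R(x))(I + \tilde R(x)) = I$ (which is \eqref{smeqR}), I first rewrite the main equation as
$$\psi(x) - \tilde\psi(x) = -\tilde\psi(x) R(x).$$
Componentwise this reads $\psi_n(x) - \tilde\psi_n(x) = -\sum_{k \ge 0} \tilde\psi_k(x) R_{k,n}(x)$. Using the fact that $\|\tilde\psi_k(x)\|_{B(G_k)} \le C$ uniformly in $k$ (a direct consequence of Lemma~\ref{lem:Schwarz} applied to the model problem) together with \eqref{estR}, and Cauchy--Schwarz,
$$\sum_{k \ge 0} \frac{\xi_k}{|n-k|+1} = \sum_{k\ge 0} \frac{(k+1)\xi_k}{(k+1)(|n-k|+1)} \le \Omega\, \eta_n,$$
which yields the second bound $\|\psi_n - \tilde\psi_n\|_{B(G_n)} \le C\Omega \eta_n$. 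Since $\eta_n$ is uniformly bounded in $n$, the triangle inequality immediately gives the $\nu=0$ part of the first bound.

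For the derivative, I differentiate the identity $\psi(x) = \tilde\psi(x) - \tilde\psi(x) R(x)$ termwise in $x$ to obtain $\psi'(x) - \tilde\psi'(x) = -\tilde\psi'(x) R(x) - \tilde\psi(x) R'(x)$. The termwise differentiation has to be justified via uniform-in-$x$ convergence, which follows from the decay already used in the proof of Lemma~\ref{lem:Rbound}. The standard asymptotics \eqref{asymptphi}--\eqref{asymptdphi} combined with Schwarz's lemma give $\|\tilde\psi'_k(x)\|_{B(G_k)} \le C(k+1)$, after which Cauchy--Schwarz produces
$$\|(\tilde\psi'(x) R(x))_n\|_{B(G_n)} \le C\sum_{k \ge 0} \frac{(k+1)\xi_k}{|n-k|+1} \le C\Omega\left(\sum_{k\ge 0}\frac{1}{(|n-k|+1)^2}\right)^{1/2} \le C\Omega.$$

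To handle the term $\tilde\psi(x) R'(x)$ I need an analogue of \eqref{estR} for the operator $R'(x)$ obtained by differentiating \eqref{defRkn}: the kernel $D(x,\la,\mu)$ is replaced by $D_x(x,\la,\mu) = \vv^*(x,\mu)\vv(x,\la)$, which is only $O(1)$ rather than $O(|n-k|+1)^{-1}$. Repeating the grouping argument from the proof of Lemma~\ref{lem:Rbound} (the same telescoping into seven blocks indexed by $s$ and by differences of neighbouring $\vv^*$'s) but with $D_x$ in place of $D$ and without the gain $|n-k|+1$, I expect to obtain $\|R'_{k,n}(x)\|_{B(G_k)\to B(G_n)} \le C\xi_k$. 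Summing against $\|\tilde\psi_k(x)\|_{B(G_k)} \le C$ and using $\sum_k \xi_k \le C\Omega$ (Cauchy--Schwarz with $\sum (k+1)^{-2} < \infty$) then yields $\|(\tilde\psi(x) R'(x))_n\|_{B(G_n)} \le C\Omega$. Combining gives the third bound $\|\psi'_n - \tilde\psi'_n\|_{B(G_n)} \le C\Omega$, and one more triangle inequality with $\|\tilde\psi'_n\|_{B(G_n)} \le C(n+1)$ produces the $\nu=1$ case of the first bound.

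The main obstacle is the bookkeeping for $R'(x)$: redoing the delicate regrouping from Lemma~\ref{lem:Rbound} with the bounded (but non-decaying in $|n-k|$) kernel $D_x$, and simultaneously verifying that the termwise differentiation in $x$ of the series for $\psi(x)$ is legitimate. Continuous differentiability of $\psi_n(x)$ then drops out of the uniform-in-$x$ convergence of both series defining $\tilde\psi(x) R(x)$ and its formally differentiated counterpart.
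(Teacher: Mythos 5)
The paper itself gives no proof here (it points to \cite[Sec.~1.6.2]{FY01}), so your estimates have to stand on their own, and your computations --- the Cauchy--Schwarz step $\sum_k \xi_k/(|n-k|+1) \le \Omega\eta_n$, the loss of the factor $(|n-k|+1)^{-1}$ when the kernel $D$ is replaced by $\partial_x D = \vv^*(x,\mu)\vv(x,\la)$, and the triangle inequalities giving the $\nu=0,1$ bounds --- are exactly the right ones. But there is a genuine logical gap in where you start: Lemma~\ref{lem:estpsi} lives in the \emph{sufficiency} part of Theorem~\ref{thm:1}, where only the data $\Lambda$ and the model problem $\tilde L$ are given and $\psi(x)$ is \emph{defined} as the solution of \eqref{main}. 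The operator $R(x)$, the identity \eqref{smeqR}, and the bound \eqref{estR} are all built from the kernel $D(x,\la,\mu)$, i.e.\ from the solutions $\vv$, $\vv^*$ of the problem $L$ --- which at this point does not yet exist; $Q$, $h$, $H$ are only constructed \emph{after} this lemma, from $\psi$ itself via \eqref{relQ}. Invoking $\psi-\tilde\psi=-\tilde\psi R$ together with \eqref{estR} is therefore circular.

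The repair is to run the same estimates entirely on the model side. From \eqref{main} one has directly $\psi-\tilde\psi=-\psi\tilde R$, and $\tilde R_{k,n}(x)$, $\tilde R'_{k,n}(x)$ satisfy the analogues of \eqref{estR} (proved verbatim as in Lemma~\ref{lem:Rbound}, since $\tilde D$ is known). To use $\psi-\tilde\psi=-\psi\tilde R$ you first need the a priori bound $\sup_{x}\|\psi(x)\|_B\le C$, which comes from $\psi=\tilde\psi(I+\tilde R(x))^{-1}$ once you know that $\|(I+\tilde R(x))^{-1}\|_{B\to B}$ is bounded \emph{uniformly} in $x\in[0,\pi]$ --- condition (M) gives invertibility only pointwise in $x$, so you must add the observation that $x\mapsto\tilde R(x)$ is continuous in operator norm and that the inverse depends continuously on the operator, whence uniformity follows by compactness of $[0,\pi]$. (Your proof is silent on this, so even your constants could a priori depend on $x$.) With these two changes every one of your displayed estimates goes through with $\tilde R$, $\tilde R'$ in place of $R$, $R'$; the derivative is handled by differentiating $\tilde\psi=\psi(I+\tilde R)$ to get $\psi'=(\tilde\psi'-\psi\tilde R')(I+\tilde R)^{-1}$ rather than differentiating $\tilde\psi-\tilde\psi R$.
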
 

By virtue of Lemma~\ref{lem:estpsi}, the matrix-functions $\vv_{nqi}(x) := \psi_n(x, \rho_{nqi})$
satisfy the following estimates
\begin{equation} \label{estphin}
    \begin{array}{c}
    \| \vv_{nqi}^{(\nu)}(x)\| \leq C(n + 1)^{\nu}, \quad \nu = 0, 1, \\
    \| \vv_{nqi}(x) - \tilde \vv_{nqi}(x)\| \leq C \Omega \eta_n, \quad
    \| \vv'_{nqi}(x) - \tilde \vv'_{nqi}(x)\| \leq C \Omega, \quad q = \overline{1, m}, \\
    \| \vv_{n q i}(x) - \vv_{n l j}(x) \| \leq C |\rho_{nqi} - \rho_{nlj}|, \quad
     q, l = \overline{1, m}, \: i, j = 0, 1.
     \end{array}                                                                              
\end{equation}

Further, we construct the matrix-functions $\vv(x, \la)$ and
$\Phi(x, \la)$ by the formulas
\begin{equation} \label{defphinew}
    \vv(x, \la) = \tilde \vv(x, \la) - \sum_{k = 0}^{\iy} \sum_{l = 1}^m \sum_{j = 0}^1 (-1)^j
    \vv_{klj}(x) \alpha'_{klj} \frac{\langle \tilde \vv^*_{klj}(x),  \tilde \vv(x, \la) \rangle}{\la - \la_{klj}},
\end{equation}
\begin{equation} \label{defPhinew}
    \Phi(x, \la) = \tilde \Phi(x, \la) - \sum_{k = 0}^{\iy} \sum_{l = 1}^m \sum_{j = 0}^1 (-1)^j
    \vv_{klj}(x) \alpha'_{klj} \frac{\langle \tilde \vv^*_{klj}(x),  \tilde \Phi(x, \la) \rangle}{\la - \la_{klj}},
\end{equation}
(see \eqref{mainphila}, \eqref{mainPhi}) and the boundary value problem $L(Q(x), h, H)$ via \eqref{relQ}. Clearly,
$\vv(x, \la_{nqi}) = \vv_{nqi}(x)$.

Using estimates~\eqref{estphin}, one can show that the entries of $\ee_0(x)$ are
absolutely continuous and the entries of
$\ee(x)$ belong to $L_2(0, \pi)$.
Consequently, we get

\begin{lem}
$Q(x) \in L_2((0, \pi), \mathbb{C}^{m \times m})$.
\end{lem}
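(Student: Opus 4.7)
The plan is to reduce the statement to proving $\ee(x) \in L_2((0,\pi), \mathbb{C}^{m\times m})$, since then $Q(x) = \tilde Q(x) + \ee(x) \in L_2$ because the model problem $\tilde L$ was chosen in $A_{1,2}(\om)$ and hence $\tilde Q \in L_2$. This is the analogue in the sufficiency direction of the lemma in Section~4 that proved $\ee(x) \in L_2$ for actual spectral data, but now the functions $\vv_{nqi}(x) = \psi_n(x,\rho_{nqi})$ are constructed from the unique solution of the main equation, so we must replace the properties that came from being true eigenfunctions by the estimates \eqref{estphin} coming from Lemma~\ref{lem:estpsi}.

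First I would recall the representation \eqref{defeps} for $\ee_0(x)$ (with the newly constructed $\vv_{klj}$) and apply the same term-regrouping as in \eqref{smeqeps}: each pairwise difference $\vv_{kl0}-\vv_{kl1}$, $\tilde\vv^*_{kl0}-\tilde\vv^*_{kl1}$, etc., together with the grouping by the sets $J_s$ and the identification $\sum_{l \in J_s}(\al'_{kl0}-\al'_{kl1}) = -(\al_k^{(s)} - \tilde\al_k^{(s)})$, produces a sum of products in which every factor is controllable by $\xi_k$. Using \eqref{estphin}, the asymptotics \eqref{asymptphi} for $\tilde\vv^*_{klj}$, and \eqref{defxi}--\eqref{defOmega}, one obtains $\|\ee_0(x)\| \le C\sum_k \xi_k < \infty$ uniformly in $x$, so $\ee_0$ is continuous.

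Next I would differentiate the series termwise to handle $\ee(x) = -2\ee_0'(x)$. The dominant contribution (as in the Section~4 lemma) comes from those terms in which the derivative falls on a factor of size $k+1$ while the other factor carries a difference of size $1/(k+1)$, e.g.\ in
\[
\frac{d}{dx}\bigl((\vv_{kl0}(x)-\vv_{kl1}(x))\al'_{kl0}\tilde\vv^*_{kl0}(x)\bigr).
\]
Using \eqref{estphin}, \eqref{asymptphi} and \eqref{asymptdphi} one obtains a leading contribution of the shape $-\cos(2kx)\,\gamma_{kl}\, x\, \al'_{kl0} + K_k(x)/(k+1)$ with $\gamma_{kl} = (k+1)(\rho_{kl0}-\rho_{kl1})$ and $\{\gamma_{kl}\}\in l_2$ (by \eqref{asymptrho} and the analogous asymptotics for $\tilde\rho_{nq}$). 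The Riesz--Fischer theorem then places the cosine series in $L_2(0,\pi)$, while the remainder $\sum_k K_k(x)/(k+1)$ converges absolutely and uniformly.

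The main obstacle is the terms involving the differences $\al_k^{(s)}-\tilde\al_k^{(s)}$ and $\al_k-\tilde\al_k$: after differentiation, a naive bound loses a factor $k$ relative to $\xi_k$ and fails. These terms must be handled by invoking the sharper asymptotic \eqref{asymptalpha3}, which gives $\|\al_n - \tilde\al_n\| = O(K_n/n)$ with $\{K_n\}\in l_2$, so that the $k$ from the derivative is absorbed and the term yields another $l_2$-type contribution; similarly $\|\al_k^{(s)}-\tilde\al_k^{(s)}\| = O(k\xi_k)$ must be paired with a difference factor of order $k^{-1}$ arising from $\vv_{km_s1}-\vv_{k11}$. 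Carrying out this accounting carefully shows that every regrouped piece is either absolutely convergent in sup norm or of $l_2$-Fourier type, yielding $\ee(x)\in L_2((0,\pi),\mathbb{C}^{m\times m})$, hence $Q\in L_2$.
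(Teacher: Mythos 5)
Your proposal is correct and follows essentially the same route as the paper, which at this point simply states that the Section~4 lemma on $\ee_0$ and $\ee$ carries over with the estimates \eqref{estphin} from Lemma~\ref{lem:estpsi} replacing the properties of genuine eigenfunctions, and then concludes via \eqref{relQ}; your write-up fills in exactly the regrouping \eqref{smeqeps}, the Riesz--Fischer step for the $\cos 2kx$ series, and the pairing of $\al_k^{(s)}-\tilde\al_k^{(s)}$ and $\al_k-\tilde\al_k$ with compensating small factors. The only blemish is a sign slip in $\sum_{l\in J_s}(\al'_{kl0}-\al'_{kl1})=\al_k^{(s)}-\tilde\al_k^{(s)}$ (no minus sign), which does not affect the argument.
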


\begin{lem} \label{lem:lphi}
The following relations hold
$$
   \ell \vv_{nqi}(x) = \la_{nqi} \vv_{nqi}(x), \quad
   \ell \vv(x, \la) = \la \vv(x, \la), \quad \ell \Phi(x, \la) = \la \Phi(x, \la),
$$
$$
   \vv(0, \la) = I_m, \quad \vv'(0, \la) = h, \quad U(\Phi) = I_m, \quad V(\Phi) = 0_m.
$$
\end{lem}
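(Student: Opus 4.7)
The plan is to verify the six claims using the explicit formulas \eqref{defphinew}, \eqref{defPhinew}, the relation $Q = \tilde Q - 2\ee_0'$ from \eqref{relQ}, and the tilde equations satisfied by $\tilde \vv$, $\tilde \vv^*_{klj}$, $\tilde \Phi$. I begin with the initial conditions for $\vv$: at $x = 0$ the wronskian $\langle \tilde \vv^*_{klj}(0), \tilde \vv(0, \la) \rangle = \tilde h - \tilde h = 0_m$, so every term of the series in \eqref{defphinew} vanishes and $\vv(0, \la) = I_m$. Differentiating \eqref{defphinew} in $x$ and using \eqref{wron} (which gives $\frac{d}{dx}\tilde D(x, \la, \la_{klj}) = \tilde \vv^*_{klj}(x) \tilde \vv(x, \la)$) produces an additional $-\ee_0(x) \tilde \vv(x,\la)$ term; evaluating at $x = 0$ yields $\vv'(0, \la) = \tilde h - \ee_0(0) = h$ via \eqref{relQ}.

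The crucial step is proving $\ell \vv(x,\la) = \la \vv(x,\la)$, whence $\ell \vv_{nqi} = \la_{nqi} \vv_{nqi}$ by specialization. Setting $F_{klj}(x) := \ell \vv_{klj}(x) - \la_{klj} \vv_{klj}(x)$ and writing $\vv = \tilde \vv - \Sigma$ with $\Sigma$ denoting the series in \eqref{defphinew}, a direct double differentiation of $\Sigma$ combined with $\tilde \ell \tilde \vv = \la \tilde \vv$ yields
$$\ell \vv(x, \la) - \la \vv(x, \la) = -\sum_{k,l,j}(-1)^j F_{klj}(x) \alpha'_{klj} \tilde D(x, \la, \la_{klj}) + \bigl(\ee + \ee_1 + \ee_0' + \ee_2\bigr)(x)\, \tilde \vv(x, \la),$$
where $\ee_1(x) := \sum (-1)^j \vv'_{klj} \alpha'_{klj} \tilde \vv^*_{klj}$ and $\ee_2(x) := \sum (-1)^j \vv_{klj} \alpha'_{klj} (\tilde \vv^*_{klj})'$. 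Differentiating \eqref{defeps} gives $\ee_0' = \ee_1 + \ee_2$, and since $\ee = -2\ee_0'$, the bracketed coefficient of $\tilde \vv$ vanishes identically. Specializing to $\la = \la_{nqi}$ and using $\vv(x, \la_{nqi}) = \vv_{nqi}(x)$ (a direct comparison of \eqref{defphinew} with the main equation \eqref{mainphi}), I obtain $F_{nqi}(x) + \sum (-1)^j F_{klj}(x) \alpha'_{klj} \tilde D(x, \la_{nqi}, \la_{klj}) = 0$, i.e.\ the homogeneous equation $\{F_{klj}(x)\}(I + \tilde R(x)) = 0$ in $B$. Condition (M) then forces $F_{klj} \equiv 0$, and substituting back delivers $\ell \vv = \la \vv$.

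For $\Phi$, the equation $\ell \Phi = \la \Phi$ follows from \eqref{defPhinew} by the identical computation with $\tilde \Phi$ in place of $\tilde \vv$. For the boundary condition $U(\Phi) = I_m$, the wronskian at $0$ equals $-\tilde U(\tilde \Phi) = -I_m$ rather than zero, but an explicit evaluation of \eqref{defPhinew} and its $x$-derivative at $0$, using $\vv_{klj}(0) = I_m$, $\vv'_{klj}(0) = h$, and $h = \tilde h - \ee_0(0)$, collapses $\Phi'(0, \la) - h \Phi(0, \la)$ to $\tilde U(\tilde \Phi) = I_m$ after the $\ee_0(0)$-terms cancel. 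For $V(\Phi) = 0_m$ at $\pi$, I would use the row-analog $\alpha'_{klj} \tilde V^*(\tilde \vv^*_{klj}) = 0_m$ of \eqref{prodVal0}, which kills every series contribution at $\pi$, while the remaining $\ee_0(\pi)$-term is absorbed by $H = \tilde H + \ee_0(\pi)$, reducing $V(\Phi)$ to $\tilde V(\tilde \Phi) = 0_m$.

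The main obstacle is the algebraic bookkeeping in the second step: differentiating $\Sigma$ twice produces four distinct auxiliary series proportional to $\tilde \vv$, and the precise factor $-2$ in $\ee = -2\ee_0'$ is exactly what forces their cancellation. A secondary technical point is ensuring that the sequence $\{F_{klj}(x)\}$ genuinely belongs to the Banach space $B$ so that (M) applies; this should follow from the estimates \eqref{estphin} together with the explicit series representation of $F_{klj}$ derived above.
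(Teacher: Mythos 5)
Your treatment of the initial conditions at $x=0$ and of the differential equations is sound and follows the route the paper itself delegates to \cite[Sec.~1.6.2]{FY01}: the cancellation $\ee + 2\ee_0' = 0$ and the reduction of $\ell\vv - \la\vv$ to the homogeneous main equation for the defect sequence $\{F_{klj}\}$ is exactly the standard argument, and your computation of $U(\Phi)=I_m$ checks out. However, your argument for $V(\Phi)=0_m$ — the one relation the paper actually proves in detail — contains a genuine gap. You invoke $\al'_{klj}\,\tilde V^*(\tilde\vv^*_{klj}) = 0_m$ for \emph{both} $j=0$ and $j=1$. For $j=1$ this is legitimate: $\la_{kl1}=\tilde\la_{kl}$ is an eigenvalue of $\tilde L^*$ and $\al'_{kl1}=\tilde\al'_{kl}$ is the corresponding residue, so the analogue of \eqref{prodVal0} applies. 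For $j=0$ it is false: $\la_{kl0}=\la_{kl}$ and $\al'_{kl0}$ come from the \emph{given} data $\Lambda$, which in the sufficiency part are not yet known to be spectral data of anything, and $\tilde\vv^*_{kl0}(x)=\tilde\vv^*(x,\la_{kl0})$ is the model solution evaluated at a point that is generically \emph{not} an eigenvalue of $\tilde L^*$; there $\tilde V^*(\tilde\vv^*(\cdot,\la_{kl0}))$ is invertible, so $\al'_{kl0}\tilde V^*(\tilde\vv^*_{kl0})=0_m$ would force $\al'_{kl0}=0_m$. Hence the $j=0$ series contributions do not vanish for the reason you give.

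What is actually needed to kill the $j=0$ terms is the relation $V(\vv_{kl0})\,\al'_{kl0}=0_m$ (the paper's \eqref{smeq8}), with the newly constructed $\vv_{kl0}$ on the left — and this is precisely the nontrivial content of the paper's proof. The paper obtains it by applying $V$ to \eqref{defphinew}, evaluating at $\la=\la_{nq1}$, multiplying by $\al_{nq1}$ and using $\tilde V(\tilde\vv_{nq1})\al_{nq1}=0_m$ together with the orthogonality relations \eqref{sym} of Lemma~\ref{lem:sym}; this yields $\int_0^\pi f(x)\tilde\vv_{nq1}(x)\al_{nq1}\,dx=0_m$ for $f(x)=\sum_{k,l}V(\vv_{kl0})\al'_{kl0}\tilde\vv^*_{kl0}(x)$. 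Then the completeness of the system built from $\{\tilde\vv_{nq1}\al_{nq1}\}$ (Lemma~\ref{lem:complete}, available since (E) holds for $\tilde\Lambda$) gives $f\equiv 0$, and the basis property of the rows of $\al'_{kl0}\tilde\vv^*_{kl0}(x)$ (Lemma~\ref{lem:basis*}, which uses (M) for the given data) forces each coefficient $V(\vv_{kl0})\al'_{kl0}$ to vanish separately. Without this completeness-plus-basis step your reduction of $V(\Phi)$ to $\tilde V(\tilde\Phi)$ does not go through.
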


\begin{proof}
Let us prove only the relation $V(\Phi) = 0_m$, since other ones can be obtained similarly to the scalar case
\cite{FY01}.
It follows from \eqref{relQ} and \eqref{defphinew}, that
$$
    \tilde V(\tilde \vv) = V(\vv) + \sum_{k = 0}^{\iy} \sum_{l = 1}^m \left( V(\vv_{kl0}) \al'_{kl0} \tilde D(\pi, \la, \la_{kl0}) - 
    V(\vv_{kl1}) \al'_{kl1} \tilde D(\pi, \la, \la_{kl1}) \right),
$$
\begin{multline*}
    \tilde V(\tilde \vv_{nq1}) \al_{nq1} = V(\vv_{nq1}) \al_{nq1} + \sum_{k = 0}^{\iy} \sum_{l = 1}^m 
    \Bigl( V(\vv_{kl0}) \al'_{kl0} \tilde D(\pi, \la_{nq1}, \la_{kl0}) \al_{nq1} \\ - 
    V(\vv_{kl1}) \al'_{kl1} \tilde D(\pi, \la_{nq1}, \la_{kl1}) \al_{nq1} \Bigr).
\end{multline*}
Using \eqref{prodVal} and \eqref{sym}, we derive
$$
    \sum_{k = 0}^{\infty} \sum_{l = 1}^m V(\vv_{kl0}) \al'_{kl0} \tilde D(\pi, \la_{nq1}, \la_{kl0}) \al_{nq1} = 0.
$$
Taking \eqref{defD} into account, we obtain
\begin{equation} \label{smeq7}
    \int_0^{\pi} f(x) \tilde \vv_{nq1}(x) \al_{nq1} \, dx = 0_m, \quad n \ge 0, \: q = \overline{1, m}, 
\end{equation}
\begin{equation} \label{smeq6}
f(x) := \sum_{k = 0}^{\iy} \sum_{l = 1}^m V(\vv_{kl0}) \al'_{kl0} \tilde \vv^*_{kl0}(x),
\end{equation}

Let us prove that $f \in L_2((0, \pi), \mathbb{C}^{m \times m})$.
Indeed, $\vv(x, \la)$ is a solution of equation \eqref{eqv}, therefore the relation \eqref{asymptVphi} holds and
$$
    V(\vv_{kl0}) = (-1)^k (\om - \om_l I_m) + K_k.
$$
By virtue of \eqref{asymptalpha2}, $\left\{ \| V(\vv_{kl0}) \al'_{kl0} \|\right\} \in l_2$.
Similarly to asymptotic relation \eqref{asymptphi}, we get $\tilde \vv^*_{kl0}(x) = \cos k x I_m + O(k^{-1})$,
and the series in \eqref{smeq7} converges in $L_2$. Hence $f(x)$ belongs to $L_2$.

The system of linearly independent columns of the matrices $\vv_{kl1}(x) \al_{kl1}$ 
is complete in $L_2((0, \pi), \mathbb{C}^m)$ by Lemma~\ref{lem:complete}. Hence, it follows from \eqref{smeq7}, that $f(x) \equiv 0$,

By Lemma~\ref{lem:basis*}, 
linearly independent rows of the matrices $\al'_{kl0} \vv^*_{kl0}(x)$ form a basis in $L_2((0, \pi), \mathbb{C}^{m, T})$.
Therefore, the terms in \eqref{smeq6} can not differ from zero. Hence
\begin{equation} \label{smeq8}
    V(\vv_{kl0}) \al'_{kl0} = 0_m,  \quad k \ge 0, \quad l = \overline{1, m}.
\end{equation}

Using \eqref{relQ} and \eqref{defPhinew}, we obtain
$$
    \tilde V(\tilde \Phi) = V(\Phi) + \sum_{k = 0}^{\infty} \sum_{l = 1}^m \Biggl( V(\vv_{kl0}) \al'_{kl0}
    \frac {\langle \tilde \vv_{kl0}^*(x), \tilde \Phi(x, \la) \rangle_{x = \pi}}{\la - \la_{kl0}} -  V(\vv_{kl1}) \al'_{kl1}
    \frac {\langle \tilde \vv_{kl1}^*(x), \tilde \Phi(x, \la) \rangle_{x = \pi}}{\la - \la_{kl1}} \Biggr) 
$$
Applying \eqref{smeq8} and the following relations:
$$
    \tilde V(\tilde \Phi) = 0_m, \quad 
    \al_{kl1} \frac {\langle \tilde \vv_{kl1}^*(x), \tilde \Phi(x, \la) \rangle_{x = \pi}}{\la - \la_{kl1}} =
    \al_{kl1} \frac{\tilde V^*(\tilde \vv_{kl1}^*) \tilde \Phi(\pi, \la) - \tilde \vv_{kl1}^*(\pi) \tilde V(\tilde \Phi)}{\la - \la_{kl1}} = 0_m,
$$
we conclude that $V(\Phi) = 0_m$.
\end{proof}

In order to finish the proof of Theorem~\ref{thm:1}, it remains to show that
the constructed boundary value problem $L(Q, h, H)$ belongs to $A_{1,2}(\om)$ and
the given data $\{\la_{nq}, \alpha_{nq} \}$ coincide with the
spectral data of $L$.
In view of Lemma~\ref{lem:lphi}, the matrix-function $\Phi(x, \la)$ is the
Weyl solution of $L$. Let us get the representation for the Weyl
matrix:
$$
    M(\la) = \Phi(0, \la) = \tilde M(\la) -  \sum_{k = 0}^{\iy} \sum_{l = 1}^m \sum_{j = 0}^1 (-1)^j \vv_{klj}(0) \alpha'_{klj}
    \frac{\langle \tilde \vv^*_{klj}(x), \tilde \Phi(x, \la) \rangle_{x = 0}}{ \la - \la_{klj}}
    \tilde M(\la)
$$
$$
    + \sum_{k = 0}^{\iy} \sum_{l = 1}^m \left(\frac{\alpha'_{kl0}}{\la - \la_{kl1}} - \frac{\alpha'_{kl1}}{\la - \la_{kl1}} \right).
$$
Using the equality (see \cite{Yur06}) 
$$
    \tilde M(\la) = \sum_{k = 0}^{\iy} \sum_{l = 1}^m \frac{\alpha'_{kl1}}{\la - \la_{kl1}},
$$
we arrive at
$$
     M(\la) = \sum_{k = 0}^{\iy} \sum_{l = 1}^m \frac{\alpha'_{kl0}}{\la - \la_{kl0}}.
$$
Consequently, $\{ \la_{kl0} \}$ are simple poles of the Weyl
matrix $M(\la)$, and $\{ \alpha_{kl0}\}$ are residues at the
poles. 
So $L \in A_{1,2}(\om)$, and $\Lambda$ is the spectral data of $L$.
Theorem~\ref{thm:1} is proved.

\bigskip

{\large \bf 7. The self-adjoint case}\\

Suppose $Q(x) = Q^{\dagger}(x)$ a.e. on $[0, \pi]$, $h = h^{\dagger}$, $H = H^{\dagger}$.
In this case, the matrix $\om = h + H + \frac{1}{2} \int_0^{\pi} Q(x) \, dx$ is Hermitian.
Therefore, one can diagonalize it, applying a unitary transform. Without loss of generality, we consider 
$L \in A(\om)$, $\om = \om^{\dagger} \in \mathcal{D}$.

The eigenvalues $\la_{nq}$ are real, and the poles of the matrix function $(V(\vv(x, \la))^{-1}$ are simple~\cite{Bond11}. 
Furthermore, it is easy to check that $\vv^*(x, \la) = \vv^{\dagger}(x, \bar \la)$,
$S^*(x, \la) = S^{\dagger}(x, \bar \la)$. Consequently, $M^*(\la) = M^{\dagger}(\bar \la) = M(\la)$
and $\al_{nq} = \al^{\dagger}_{nq}$, for all $n \ge 0$, $q = \overline{1, m}$.
By Lemma~\ref{lem:sym},
$$
    \al_{nq} = \al_{nq}^{\dagger} \int_0^{\pi} \vv^{\dagger}(x, \la_{nq}) \vv(x, \la_{nq}) \, dx \, \al_{nq} \ge 0.
$$
Taking the last fact together with asymptotics \eqref{asymptalpha1}, we conclude, that $\| \al_{nq}\| \le C$, 
$n \ge 0$, $q = \overline{1, m}$.

Thus, we have shown that Assumptions 1 and 2 holds automatically in the self-adjoint case. Moreover, we have proven 
(S) in Theorem~\ref{thm:2}. Note that (E) by necessity was proven in Lemma~\ref{lem:E}. 
So we have finished the proof of the necessity.

In order to prove the sufficiency  in Theorem~\ref{thm:2}, it remains to show that 
the solvability of the main equation \eqref{main} follows from (E) together with other conditions.

Let data $\{\la_{nq}, \alpha_{nq}\}_{n \geq 0, q =
\overline{1, m}} \in \mbox{Sp}$, satisfying the conditions of Theorem~\ref{thm:2}, be given. 
Choose a model problem $\tilde L \in A(\om)$, construct $\tilde \psi(x)$, $\tilde R(x)$, and consider the
main equation \eqref{main}.
 
\begin{lem} \label{lem:homo}
For each fixed $x \in [0, \pi]$, the operator $I + \tilde R(x)$, acting from $B$ to $B$,
has a bounded inverse operator, and the main equation~\eqref{main}
has a unique solution $\psi(x) \in B$.
\end{lem}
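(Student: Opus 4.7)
The plan is to apply the Fredholm alternative to reduce unique solvability of \eqref{main} to triviality of the kernel of $I + \tilde R(x)$, and then to exploit condition (E) via an entire-function interpolation argument.

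By Lemma~\ref{lem:Rbound}, $\tilde R(x)$ is compact in $B$ for each fixed $x \in [0,\pi]$. Hence it suffices to show that the homogeneous equation $\gamma(I+\tilde R(x)) = 0$ admits only $\gamma = 0$ in $B$. Fix $x \in [0,\pi]$ and suppose $\gamma = \{\gamma_n\}_{n\ge 0} \in B$ satisfies it; writing the equation out via \eqref{defRsimp}, \eqref{defRkn} gives
\[
\gamma_{nqi} + \sum_{k=0}^{\iy}\sum_{l=1}^m \bigl(\gamma_{kl0}\,\al'_{kl0}\,\tilde D(x,\la_{nqi},\la_{kl0}) - \gamma_{kl1}\,\al'_{kl1}\,\tilde D(x,\la_{nqi},\la_{kl1})\bigr) = 0_m
\]
for all $n \ge 0$, $q = \overline{1,m}$, $i = 0,1$.

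I would then construct the candidate entire extension of the data $\{\gamma_{nqi}\}$,
\[
\Gamma(\la) := -\sum_{k=0}^{\iy}\sum_{l=1}^m \bigl(\gamma_{kl0}\,\al'_{kl0}\,\tilde D(x,\la,\la_{kl0}) - \gamma_{kl1}\,\al'_{kl1}\,\tilde D(x,\la,\la_{kl1})\bigr),
\]
and use the bracket-regrouping technique of \eqref{transformgamma} from the proof of Lemma~\ref{lem:EM}, together with the asymptotic data in (A) and $\gamma \in B$, to verify absolute convergence, that $\Gamma$ is entire with $\|\Gamma(\la)\| = O(\exp(|\tau|\pi))$, and that $\Gamma(\la_{nqi}) = \gamma_{nqi}$ for every triple $(n,q,i)$.

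The main obstacle — and the step that truly requires the self-adjointness hypothesis — is establishing the orthogonality $\Gamma(\la_{nq})\al_{nq} = 0$ for all $n,q$; once this holds, condition (E) applied row-by-row to $\Gamma$ forces $\Gamma \equiv 0$, and then the defining series yields $\gamma_{nqi} = 0$ for all indices, so $\gamma = 0$. The symmetry identity \eqref{sym}, coming from Lemma~\ref{lem:sym} applied to the model problem $\tilde L$, handles the ``$i = 1$'' residues $\al'_{nq1} = \tilde\al'_{nq}$ immediately after right-multiplication by $\al_{nq1}$. For the ``$i = 0$'' residues $\al_{nq0} = \al_{nq}$, which is exactly where the non-self-adjoint argument fails, I plan to exploit (S): since $\al_{nq} = \al_{nq}^{\dagger} \ge 0$ and all $\la_{nq}$ are real, the ranges $\mathcal{E}_{nq}$ and $\mathcal{E}_{nq}^*$ are Hermitian conjugates, and Lemma~\ref{lem:basis*} supplies a basis of $L_2((0,\pi),\mathbb{C}^{m,T})$ formed from $\al'_{nq}\tilde\vv^*_{nq}(\,\cdot\,)$. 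Combining this basis property with the integral representation $\tilde D(x,\la,\mu) = \int_0^x \tilde\vv^*(t,\mu)\tilde\vv(t,\la)\,dt$ (see \eqref{defD}) permits one to transfer the orthogonality obtained in the ``$i = 1$'' step to the ``$i = 0$'' step, paralleling the transfer used in the scalar self-adjoint case of \cite{But07, BYS13}. Applying (E) then closes the argument and delivers $\gamma = 0$, proving unique solvability of \eqref{main}.
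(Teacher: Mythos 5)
Your reduction via compactness of $\tilde R(x)$ to the homogeneous equation, and the construction of the entire interpolant $\Gamma(\la)$ of the kernel element $\{\gamma_{nqi}\}$, match the paper's proof, and you have correctly isolated the crux: proving $\Gamma(\la_{nq})\al_{nq}=0_m$ so that (E) can be invoked. But the route you propose for that step does not work. The identity \eqref{sym} and Lemma~\ref{lem:basis*} are both tied to the full interval: \eqref{sym} comes from Lemma~\ref{lem:sym}, whose orthogonality relations are integrals over all of $[0,\pi]$, so it applies only to $\tilde D(\pi,\cdot,\cdot)$, whereas Lemma~\ref{lem:homo} must be proved for \emph{every} fixed $x\in[0,\pi]$ and your $\Gamma$ is built from $\tilde D(x,\cdot,\cdot)$. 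Worse, Lemma~\ref{lem:basis*} carries (M) among its hypotheses --- unique solvability of the main equation is exactly what you are trying to establish --- so invoking it here is circular. And even at $x=\pi$ the ``transfer from $i=1$ to $i=0$'' is not actually carried out: right-multiplying $\Gamma(\la_{nq1})$ by $\al_{nq1}$ and applying \eqref{sym} essentially reproduces the homogeneous equation and yields no orthogonality for the $\al_{nq0}$.

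The paper closes this gap by a different mechanism, and this is where (S) really enters. One introduces a second, meromorphic function $\Gamma(x,\la)$ built from the same coefficients $\gamma_{klj}(x)\al'_{klj}$ but with kernels $\langle\tilde\vv^*_{klj}(x),\tilde\Phi(x,\la)\rangle/(\la-\la_{klj})$ involving the Weyl solution, so that the product $\mathscr{B}(x,\la)=\gamma^{\dagger}(x,\bar\la)\Gamma(x,\la)$ decays like $|\rho|^{-3}$ on the contours $\Gamma_N$. Letting $N\to\iy$ and applying the residue theorem gives $\sum_{k,l}\gamma^{\dagger}_{kl0}(x)\gamma_{kl0}(x)\al'_{kl0}=0_m$; since each $\al'_{kl0}=(\al'_{kl0})^{\dagger}\ge 0$ and each $\gamma^{\dagger}_{kl0}\gamma_{kl0}\ge 0$, a trace argument forces every term to vanish, i.e.\ $\gamma(x,\la_{kl0})\al_{kl0}=0_m$ for all $k,l$ and every fixed $x$. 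Only then does (E) apply to give $\gamma(x,\la)\equiv 0_m$ and hence $\gamma=0$ in $B$. Your proposal uses self-adjointness only through ``Hermitian conjugate ranges,'' not through this positivity-of-a-vanishing-sum argument, which is the step that cannot be dispensed with; as written, the proof has a genuine gap at its decisive point.
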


\begin{proof}
By Lemma~\ref{lem:Rbound} the operator $\tilde R(x)$ is compact.
Therefore it is sufficient to prove that the homogeneous equation
\begin{equation} \label{homo}
    \ga(x) (I + \tilde R(x)) = 0,                                                           
\end{equation}
where $\ga(x) \in B$,
has only the zero solution.
Let $\ga(x) = \{ \ga_n(x)\}_{n \ge 0} \in B$ be a solution of \eqref{homo}.
Denote $\ga_{nqi}(x) = \ga_n(x, \rho_{nqi})$. Then
\begin{equation} \label{relgamma}
    \ga_{nqi}(x) + \sum_{k = 0}^{\iy} \sum_{l = 1}^m \left(\ga_{kl0}(x) \al'_{kl0} \tilde D(x, \la_{nqi}, \la_{kl0}) -
    \ga_{kl1}(x) \al'_{kl1} \tilde D(x, \la_{nqi}, \la_{kl1})\right) = 0_m,
\end{equation}
and the following estimates are valid
\begin{equation} \label{estgamma}
\| \ga_{nqi}(x) \| \le C, \quad \| \ga_{nqi}(x) - \ga_{nlj}(x) \| \le C |\rho_{nqi} - \rho_{nlj}|, 
\quad n \ge 0, \: q, l = \overline{1, m}, \: i, j = 0, 1.
\end{equation}

Construct the matrix-functions $\gamma(x, \la)$, $\Gamma(x, \la)$
and $\mathscr{B}(x, \la)$ by the formulas
\begin{equation} \label{defgamma}
   \gamma(x, \la) = -\sum\limits_{k = 0}^{\iy} \sum\limits_{l = 1}^m \left(
   \gamma_{kl0}(x) \alpha'_{kl0} \tilde D(x, \la, \la_{kl0})
   - \gamma_{kl1}(x) \alpha'_{kl1} \tilde D(x, \la, \la_{kl1}) \right),                                                                                   
\end{equation}
\begin{equation} \label{defGamma}
    \Gamma(x, \la) = -\sum\limits_{k = 0}^{\iy} \sum\limits_{l = 1}^m \biggl[
    \gamma_{kl0}(x) \alpha'_{kl0} \frac{\langle \tilde \vv_{kl0}^*(x),
    \tilde \Phi(x, \la) \rangle}{\la - \la_{kl0}}
     - \gamma_{kl1}(x) \alpha'_{kl1} \frac{\langle \tilde \vv_{kl1}^*(x),
     \tilde \Phi(x, \la) \rangle}{\la - \la_{kl1}}
    \biggr],                                                                                  
\end{equation}
$$
     \mathscr{B}(x, \la) = \gamma^{\dagger}(x, \bar{\la}) \Gamma(x, \la).
$$
In view of~\eqref{defD}, the matrix-function $\gamma(x, \la)$ is entire in
$\la$ for each fixed $x$. 
The functions $\Gamma(x, \la)$ and $\mathscr{B}(x, \la)$ are meromorphic in $\la$ with simple poles $\la_{nqi}$.
According to \eqref{relgamma}, $\gamma(x, \la_{nqi}) = \gamma_{nqi}(x)$.
We calculate residues of $\mathscr{B}(x, \la)$ (for simplicity we assume that
$\{ \la_{nq0}\} \cap \{\la_{nq1} \} = \emptyset$):
$$
    \mathop{\mathrm{Res}}_{\la = \la_{nq0}} \mathscr{B}(x, \la) =
    \gamma^{\dagger}(x, \la_{nq0}) \gamma(x, \la_{nq0}) \alpha_{nq0}, \quad
    \mathop{\mathrm{Res}}_{\la = \la_{nq1}} \mathscr{B}(x, \la) = 0_m.
$$

Consider the integral
$$
    I_N(x) = \frac{1}{2 \pi i} \int\limits_{\Gamma_N} \mathscr{B}(x, \la) \, d \la,
$$
where $\Gamma_N = \{ \la \colon |\la| = (N + 1/2)^2\}$. Let us
show that for each fixed $x \in [0, \pi]$
$$\lim\limits_{N \to \iy} I_N(x) = 0_m.$$

Indeed, transforming \eqref{defgamma} similarly to \eqref{transformgamma}, and using
\eqref{defxi}, \eqref{estgamma} and the estimates
\begin{multline*}
    \| \tilde D(x, \la, \la_{klj}) \| \le \frac{C \exp(|\tau|x)}{|\rho - k| + 1}, \quad
    \| \tilde D(x, \la, \la_{klj}) - \tilde D(x, \la, \la_{kqi} \| \le \frac{C|\rho_{klj} - \rho_{kqi}|\exp(|\tau|x)}{|\rho - k| + 1}, \\
    k \ge 0, \: l, q = \overline{1, m}, \: i, j = 0, 1,
\end{multline*}
we get
$$
   \|\gamma(x, \la) \| \leq C(x) \exp(|\tau|x) \sum_{k = 0}^{\iy} \frac{\xi_k}{|\rho - k| + 1},
   \quad \mbox{Re} \, \rho \geq 0.
$$
Similarly, using \eqref{defGamma} we obtain for sufficiently large $\rho^* > 0$:
$$
    \|\Gamma(x, \la) \| \leq \frac{C(x)}{|p|} \exp(-|\tau|x) \sum_{k = 0}^{\iy} \frac{\xi_k}{|\rho - k| + 1},
    \mbox{Re} \, \rho \geq 0, \, |\rho| \geq \rho^*, \, |\rho - k| > \de > 0.
$$
Then
$$
    \| \mathscr{B}(x, \la) \| \leq \frac{C(x)} {|\rho|} \left( \sum_{k = 0}^{\iy} \frac{\xi_k}{|\rho - k| + 1} \right)^2
    \leq \frac{C(x)} {|\rho|^3}, \quad \la \in \Gamma_N.
$$
This estimate yields $\lim\limits_{N \to \iy} I_N(x) = 0_m$.

On the other hand, calculating the integral $I_N(x)$ by the residue theorem, we arrive at
$$
     \sum\limits_{k = 0}^{\iy} \sum\limits_{l = 1}^m \gamma^{\dagger}_{kl0}(x) \gamma_{kl0}(x) \alpha'_{kl0} = 0_m.
$$
Since $\alpha_{kl0} = \alpha^{\dagger}_{kl0} \geq 0$, we get
$$\gamma^{\dagger}_{kl0}(x) \gamma_{kl0}(x) \alpha_{kl0} = 0_m,$$
$$\gamma(x, \la_{kl0}) \alpha_{kl0} = 0_m, \quad k \geq 0, \quad l = \overline{1, m}.$$

Since $\gamma(x, \la)$ is entire in $\la$, and
 $$\gamma(x, \la) = O(\exp(|\tau| x))$$
 for each fixed $x \in [0, \pi]$,
 according (E), we get $\gamma(x, \la) \equiv 0_m$.
Therefore $\gamma_{nqi}(x) = 0_m$ for all $n \geq 0$, $q = \overline{1, m}$,
$i = 0, 1$, i.\,e. the homogeneous equation \eqref{homo} has only the zero solution.
\end{proof}

Thus, the proof of Theorem~\ref{thm:2} is finished.
Some discussion on condition (E) with examples is provided in \cite{Bond11}.
We can also give an alternative formulation of necessary and sufficient conditions
with condition (C) instead of (E).

\begin{thm} \label{thm:2C}
Let $\om = \om^{\dagger} \in \mathcal{D}$.
For data $\{\la_{nq}, \alpha_{nq}\}_{n \geq 0, q = \overline{1, m}} \in \mbox{Sp}$ to be
the spectral data for a certain self-adjoint problem $L \in A(\om)$ it is
necessary and sufficient to satisfy the following conditions. 

(A) The asymptotics \eqref{asymptrho}, \eqref{asymptalpha1}, 
\eqref{asymptalpha2} \eqref{asymptalpha3} are valid.

(R) The ranks of the
matrices $\al_{nq}$ coincide with the multiplicities of the corresponding
values $\la_{nq}$.

(S) All $\la_{nq}$ are real, $\alpha_{nq} = (\alpha_{nq})^\dagger$,
$\alpha_{nq} \geq 0$ for all $n \geq 0$, $q = \overline{1, m}$.
 
(C) The system of functions
$$
    \cos \rho_{nq}x \mathcal{E}_{nq}^{(i)}, \quad n \ge 0, \: q = \overline{1, m}, \: i = \overline{1, m_{nq}},
$$
is complete in $L_2((0, \pi), \mathbb{C}^m)$.

\end{thm}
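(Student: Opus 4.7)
The plan is to deduce Theorem~\ref{thm:2C} from Theorem~\ref{thm:2} by showing that, under hypotheses (A), (R) and (S), condition (C) alone already suffices to establish the unique solvability (M) of the main equation. In one direction, (E)~$\Rightarrow$~(C) is contained in the discussion at the end of Section~5 (via Lemma~\ref{lem:complete}), and this gives necessity immediately: if $\Lambda$ is the spectral data of some self-adjoint $L \in A(\om)$, then Theorem~\ref{thm:2} delivers (A), (R), (S), (E), hence also (C).

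For sufficiency, I would follow the proof of Lemma~\ref{lem:homo} almost verbatim. The only point where (E) is used there is the concluding appeal to the entire function $\gamma(x,\la)$ (defined by~\eqref{defgamma}) to infer $\gamma(x, \la) \equiv 0_m$ from $\gamma(x, \la_{kl0}) \alpha_{kl0} = 0_m$. I would replace this by an appeal to condition (PW), which is equivalent to (C) by the Paley--Wiener discussion at the end of Section~5. This requires verifying that the constructed $\gamma(x, \cdot)$ belongs to PW for each fixed $x \in [0,\pi]$. The exponential-type bound
$$
   \|\gamma(x, \la)\| \le C(x) \exp(|\tau|x) \sum_{k=0}^{\infty} \frac{\xi_k}{|\rho - k| + 1}, \quad \operatorname{Re}\rho \ge 0,
$$
is already proved inside Lemma~\ref{lem:homo}, so conditions (1) and (2) of PW hold. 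For condition (3), the $L^2$-bound on the positive real axis, I would use $\{\xi_k\} \in \ell^1$ (a consequence of \eqref{defOmega}) together with Minkowski's inequality:
$$
   \left(\int_0^\infty \Bigl(\sum_{k=0}^\infty \tfrac{\xi_k}{|\rho - k| + 1}\Bigr)^{\!2} d\rho\right)^{\!1/2} \le \sum_{k=0}^\infty \xi_k \left(\int_0^\infty \tfrac{d\rho}{(|\rho - k| + 1)^2}\right)^{\!1/2} \le C \sum_{k=0}^\infty \xi_k < \infty,
$$
since $\|(|\rho-k|+1)^{-1}\|_{L^2(\mathbb{R}_+)}$ is uniformly bounded in $k$. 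Thus $\gamma(x, \cdot) \in \mathrm{PW}$, and since $\gamma(x, \la_{nq}) \alpha_{nq} = 0_m$ by construction, (PW) forces $\gamma(x, \la) \equiv 0_m$. Then $\gamma_{nqi}(x) = 0_m$ for all indices, so the homogeneous equation~\eqref{homo} has only the zero solution; combined with the compactness of $\tilde R(x)$ (Lemma~\ref{lem:Rbound}), this yields (M), and the remaining steps of Theorem~\ref{thm:2} apply.

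The principal obstacle is precisely the verification of the PW $L^2$-condition. The key observation that makes it go through is that the definition~\eqref{defOmega} encodes not only $\{(n+1)\xi_n\} \in \ell^2$, but also $\{\xi_n\} \in \ell^1$, which is what controls the convolution-type sum above. Once PW membership is secured, the remainder of the argument is a transcription of the self-adjoint proof of Section~7 with ``entire'' replaced throughout by ``entire and of PW class''.
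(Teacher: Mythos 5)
Your proposal is correct and takes essentially the same route as the paper: the paper obtains Theorem~\ref{thm:2C} from Theorem~\ref{thm:2} by noting that the only places where condition (E) is invoked (Lemmas~\ref{lem:complete} and \ref{lem:homo}) apply it to functions $\ga(\la)$ that in fact lie in the Paley--Wiener class, so the weaker condition (PW) $\Leftrightarrow$ (C) suffices. Your Minkowski-inequality verification that $\ga(x,\cdot)\in\mbox{PW}$ (using $\sum_k \xi_k < \iy$ from \eqref{defOmega}) supplies exactly the detail the paper dismisses with ``one can easily show''.
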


Condition (C) was used by Ya.V. Mykytyuk and N.S. Trush \cite{MT10} in the characterization
of the spectral data for the self-adjoint matrix Sturm-Liouville operator
with the potential from $W_2^{-1}$.
In spite of the fact, that this class is wider than our class $L_2$,
these are two parallel results for different classes. Moreover, 
for $W_2^{-1}$ the asymptotics of eigenvalues and weight matrices are more rough,
that makes the class $W_2^{-1}$ easier for investigation.
It does not require our technic with complicated division of eigenvalues into groups. 
Now we have shown, that condition (C) can be used
in our case with our method, so there is no principal difference with the work \cite{MT10}
in this particular point.

It was established in Section~5, that (C) is weaker than (E), but (C) is equivalent to (PW).
One can easily show that
everywhere in our proofs (namely, in Lemmas~\ref{lem:complete} and \ref{lem:homo}), 
we apply this type of conditions to functions $\ga(\la) \in \mbox{PW}$.
Thus, both Theorems~\ref{thm:2} and \ref{thm:2C} are valid.
 
\medskip

{\bf Acknowledgment.} This research was supported by Grants 13-01-00134 and 14-01-31042
of Russian Foundation for Basic Research and
by the Russian Ministry of Education and Science (Grant 1.1436.2014K).

\medskip

\vspace{1cm}

Natalia Bondarenko

Department of Mathematics

Saratov State University

Astrakhanskaya 83, Saratov 410026, Russia

{\it bondarenkonp@info.sgu.ru}

\end{document}